\newcommand{\A}{\mathop{\mathscr A}\nolimits}
\newcommand{\Om}{\Omega}
\newcommand{\om}{\omega}
\newcommand{\RR}{\mathbb{R}}
\newcommand{\CC}{\mathbb{C}}
\newcommand{\Real}{\mathop{\mathscr Re}\nolimits}
\newcommand{\Imm}{\mathop{\mathscr Im}\nolimits}
\newcommand{\elle}{\mathop{\mathscr L}\nolimits}
\newcommand{\lan}{\langle}
\newcommand{\ran}{\rangle}
\newcommand{\Cspt}{C_{0}}
\newcommand{\dive}{\mathop{\rm div}}
\newcommand{\de}{\partial}
\newcommand\B{\mathop{\mathscr B}\nolimits}
\newcommand\Cm{\mathop{\mathscr C}\nolimits}
\newcommand\Dm{\mathop{\mathscr D}\nolimits}
\newtheorem{theorem}{Theorem}[section]
\newtheorem{lemma}[theorem]{Lemma}
\newtheorem{corollary}[theorem]{Corollary}
\theoremstyle{definition}
\newtheorem{example}[theorem]{Example}
\theoremstyle{remark}
\newtheorem{remark}[theorem]{Remark}
\numberwithin{equation}{section}
\begin{document}

\title{The $L^p$-dissipativity of certain  differential and
integral operators}


\author[A. Cialdea]{Alberto Cialdea}
\address{Dipartimento di Matematica, Economia ed Informatica, University of Basilicata, V.le dell'Ateneo Lucano 10, 85100, Potenza, Italy}
\curraddr{}
\email{cialdea@email.it}
\thanks{}

\author[V. Maz'ya]{Vladimir Maz'ya}
\address{Department of Mathematical Sciences, M\& O
  Building, University of Liverpool, Liverpool L69 7ZL, UK.
Department of Mathematics, Link\"oping University,
SE-581 83, Link\"oping, Sweden.
RUDN University,
6 Miklukho-Maklay St, Moscow, 117198, Russia.}
\curraddr{}
\email{vladimir.mazya@liu.se}
\thanks{The publication was supported by the Ministry of Education and Science of the Russian Federation (Agreement No. 02.a03.21.0008).}

\dedicatory{In Memory of Selim G. Krein}

\subjclass[2010]{Primary 47B44; Secondary 47F05, 74B05} 

\date{}

\begin{abstract}
The first part of the paper is a survey of some of the results previously obtained by the
authors concerning the $L^p$-dissipativity of scalar and matrix partial
differential operators. In the second part we 
give new necessary and, separately, sufficient conditions
for the $L^p$-dissipativity of the ``complex oblique derivative'' operator.
In the case of real coefficients we provide a necessary and sufficient condition. 
We prove also the $L^p$-positivity for a certain class of
integral operators.
\end{abstract}

\maketitle


\section{Introduction}

In a series of papers \cite{cialmaz1,cialmaz2,cialmaz3,cialmaz17} we have studied the
problem of characterizing the $L^p$-dissipativity
of scalar and matrix partial differential operators.
The main result we have obtained is that the algebraic condition
\begin{equation}\label{eq:intro0}
|p-2|\, |\lan \Imm\A\xi,\xi\ran| \leq 2 \sqrt{p-1}\,
	\lan \Real\A\xi,\xi\ran
\end{equation}
for any $\xi\in\RR^n$,  is necessary and sufficient for the $L^p$-dissipativity
of the Dirichlet problem for the scalar differential operator
$\nabla^t(\A\nabla)$, where $\A$ is a matrix whose entries
are complex measures, not necessarily absolutely continuous, and whose imaginary parts is symmetric. 
Specifically we have proved that condition \eqref{eq:intro0}
is necessary and sufficient for the $L^p$-dissipativity
of the related sesquilinear form
$$
\elle(u,v)= \int_\Omega \lan \A\nabla u, \nabla v\ran\, .
$$

Such condition characterizes the $L^p$-dissipativity
individually, for each $p$, while in the literature previous
results dealt with the $L^p$-dissipativity for any $p\in [1,+\infty)$.
Later on we have  considered more general operators. Our results
are described in the monograph \cite{cialmazbook}.

We remark that, if $\Imm\A$ is symmetric, \eqref{eq:intro0} is equivalent to the
the condition
\begin{equation}\label{intro:form}
 \frac{4}{p\,p'}\lan \Real \A\xi,\xi\ran + \lan \Real \A\eta,\eta\ran
       +2 \lan (p^{-1}\Imm\A + p'^{-1}\Imm\A^{*}) \xi,\eta \ran \geq 0
\end{equation}
for any $\xi, \eta\in\RR^n$. If the matrix $\Imm\A$ is not symmetric,
condition \eqref{intro:form} is only sufficient for the $L^p$-dissipativity
of the corresponding form. 

Let us consider the
class of partial differential operators of the second order whose principal part is such that the form \eqref{intro:form} is not merely
non-negative, but strictly positive. This class of operators,
which could be called \textit{$p$-strongly elliptic}, was recently considered by  Carbonaro  and Dragi\v{c}evi\'c \cite{carbonaro}, and Dindo\v{s} and Pipher \cite{dindos}.

In what follows, saying the $L^p$-dissipativity of an operator $A$, we mean the $L^p$-dissipativity of the corresponding form $\elle$, just to 
simplify the terminology.

In the present paper, after surveying some of our
more recent results for systems which are not contained in \cite{cialmazbook}, we give some new  
theorems.

The first ones concern the ``complex oblique derivative''
operator
$$
\lambda\cdot\nabla u=
\frac{\partial u}{\partial x_{n}} +
\sum_{j=1}^{n-1}a_{j}\frac{\partial u}{\partial x_{j}}
$$
where $\lambda=(1,a_1,\ldots,a_{n-1})$ and $a_j$ are  complex valued 
functions. We give necessary and, separately, sufficient 
conditions under which such boundary operator is $L^p$-dissipative
on $\RR^{n-1}$.
If the coefficients $a_j$ are real valued, 
we provide a necessary and sufficient condition.

The last result concerns a class of integral
operators which can be
written as
\begin{equation}\label{intro:1}
\int_{\RR^{n}}^*[u(x)-u(y)]\, K(dx,dy)
\end{equation}
where the integral has to be understood as a principal value in the sense of Cauchy
and the kernel $K(dx,dy)$ is a Borel positive measure defined 
on $\RR^n\times\RR^n$ satisfying certain conditions. The class
of operators we consider includes the fractional powers of Laplacian
$(-\Delta)^s$, with $0<s<1$. We establish the $L^p$-positivity
of operator \eqref{intro:1}, extending in this way a result we
obtained in \cite[p.230--231]{cialmazbook}.

The paper is organized as follows. 
Section \ref{sec:scalar} presents a review of our main results concerning
scalar differential operators of the second order.

Section \ref{sec:systems} is dedicated to systems. In particular we describe some necessary and sufficient conditions for
the $L^p$-dissipativity of systems of partial differential operators
of the first order, recently obtained
 in \cite{cialmaz17}.

The topic of Section \ref{sec:elast} is elasticity system. After recalling the necessary and sufficient conditions we previously obtained in the
planar case, we describe sufficient conditions holding in 
any dimension and proved in \cite{cialmaz3}.  Finding necessary and sufficient conditions for the
$L^p$-dissipativity of elasticity system in the
three-dimensional case is still an open problem.

In Sections \ref{sec:oblique} and \ref{sec:intop} 
we prove the above mentioned  results concerning the ``complex oblique
derivative'' operator and the operator \eqref{intro:1} respectively.

\section{The scalar operators}\label{sec:scalar}

Let $\Om$ be 
an open set in $\RR^{n}$. Consider the
sesquilinear form
$$
    {\elle}(u,v)=\int_{\Om}(\lan \A\nabla u,\nabla v\ran -\lan {\bf b}\nabla u,v\ran
    +\lan u,\overline{\bf c}\nabla v\ran
    -a\lan u,v\ran)\,
$$
defined on $\Cspt^{1}(\Om)\times \Cspt^{1}(\Om)$.
Here  $\A$ is a $n\times n$ matrix
function with complex valued entries $a^{hk}\in (\Cspt(\Om))^{*}$,
${\bf b}=(b_{1},\ldots,b_{n})$ and ${\bf c}=(c_{1},\ldots,c_{n})$
are complex
valued vectors
with $b_{j}, c_{j}\in (\Cspt(\Om))^{*}$ and
$a$ is a complex valued scalar distribution in $(\Cspt^{1}(\Om))^{*}$.
The symbol $\lan \cdot, \cdot \ran$  denotes the inner product either in
$\CC^{n}$ or in $\CC$.

In what follows, if $p\in(1,\infty)$, $p'$ denotes its conjugate exponent $p/(p-1)$.

The integrals appearing in this definition have to be understood in
a proper way. The entries $a^{hk}$ being measures, the meaning of the
first term is
$$
\int_{\Om}\lan \A\nabla u,\nabla v\ran = 
\int_{\Om}\de_k u\, \de_h \overline{v}\, da^{hk}\, .
$$

Similar meanings have the terms involving ${\bf b}$ and ${\bf c}$. 
Finally, the last term is the action of the distribution $a\in 
(\Cspt^{1}(\Om))^{*}$ on
the functions $u\, \overline{v}$ belonging to $\Cspt^{1}(\Om)$.

We say that the form ${\mathscr L}$ is
\textit{$L^{p}$-dissipative} ($1<p<\infty$) if
for all $u\in \Cspt^{1}(\Om)$
\begin{eqnarray}
    \Real {\mathscr L}(u, |u|^{p-2}u) \geq 0
    \qquad \hbox{\rm if}\ p\geq 2;
    \label{eq:defdis1}\\
    \Real {\mathscr L}(|u|^{p'-2}u, u) \geq 0 \qquad
    \hbox{\rm if}\ 1<p< 2
      \label{eq:defdis2}
\end{eqnarray}
(we use here that $|u|^{q-2}u\in \Cspt^{1}(\Om)$ for $q\geq 2$ and
$u\in \Cspt^{1}(\Om)$).

The form ${\mathscr L}$ is related to the operator
\begin{equation}
    Au =\dive(\A\nabla u) + {\bf b}\nabla u + \dive({\bf c}u)+au.
    \label{eq:N}
\end{equation}
where $\dive$ denotes the divergence operator.
The operator $A$ acts from $\Cspt^{1}(\Om)$ to $(\Cspt^{1}(\Om))^{*}$ through the
relation
$$
\elle(u,v)=-\int_{\Om}\lan Au,v\ran
$$
for any $u,v \in \Cspt^{1}(\Om)$. The integration is understood in the sense of distributions.

As we already remarked, saying the $L^p$-dissipativity of the operator $A$, we mean the$L^p$-dissipativity of the form $\elle$.

The following Lemma provides a necessary and sufficient
condition for the $L^p$-dissipativity of the form $\elle$.

\begin{lemma}[\cite{cialmaz1}]\label{lemma:1}
The operator $A$ is $L^{p}$-dissipative
    if and only if for all $v\in \Cspt^{1}(\Om)$
    \begin{gather*}
  \Real \int_{\Om}\Big[ \lan \A\nabla v,\nabla v\ran -
	(1-2/p)\lan (\A-\A^{*})\nabla(|v|),|v|^{-1}\overline{v}\nabla v\ran  -\\
	(1-2/p)^{2}\lan \A \nabla(|v|),\nabla(|v|)\ran
\Big]
+
\int_{\Om}\lan\Imm ({\bf b}+{\bf c}),\Imm(\overline{v}\nabla
v)\ran\,   +
\\
\int_{\Om}\Real(\dive ({\bf b}/p - {\bf c}/p') - a
)|v|^{2}
	\geq 0.
\end{gather*}

  Here and in what follows the integrand is extended by zero
    on the set where $v$ vanishes.
\end{lemma}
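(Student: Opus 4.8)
The plan is to start from the definition of $L^p$-dissipativity and turn the inequalities \eqref{eq:defdis1}--\eqref{eq:defdis2} into the stated one by a direct, if laborious, computation. I would treat the case $p\geq 2$ first, so that the criterion reads $\Real\,\elle(u,|u|^{p-2}u)\geq 0$ for every $u\in\Cspt^1(\Om)$. The engine of the whole calculation is the elementary identity
$$
\nabla(|u|^{p-2}u)=|u|^{p-2}\nabla u+(p-2)\,|u|^{p-4}\,\Real(\overline u\,\nabla u)\,u,
$$
valid where $u\neq 0$ (and extended by continuity for $p>2$), together with $\nabla|u|=|u|^{-1}\Real(\overline u\,\nabla u)$. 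Substituting the conjugate of this expression into each of the four terms defining $\elle$ produces an integrand carrying the common factor $|u|^{p-2}$; the point is then to reorganize what multiplies this factor into the quadratic form of the statement after the change of unknown $v=|u|^{(p-2)/2}u$, so that $|v|=|u|^{p/2}$ and every term becomes homogeneous of degree two in $v$, matching the right-hand side.

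For the principal part I would expand $\Real\int_\Om\lan\A\nabla u,\nabla(|u|^{p-2}u)\ran$ into three groups. The group with both gradients equal to $\nabla u$ yields $|u|^{p-2}\lan\A\nabla u,\nabla u\ran$, which after the substitution becomes $\lan\A\nabla v,\nabla v\ran$ up to the contributions generated by expressing $\nabla u$ through $\nabla v$ and $\nabla|v|$. The ``mixed'' contributions, in which exactly one gradient is replaced by $|u|^{-1}\Real(\overline u\,\nabla u)\,\overline u=\overline u\,\nabla|u|$, are the delicate ones: the real-part operation has to be applied carefully to separate the symmetric combination $\A+\A^{*}$ (which recombines with the diagonal terms) from the antisymmetric one $\A-\A^{*}$, and it is precisely this antisymmetric remainder that produces the cross term $-(1-2/p)\lan(\A-\A^{*})\nabla|v|,|v|^{-1}\overline v\,\nabla v\ran$. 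The purely radial contribution collects into $-(1-2/p)^{2}\lan\A\nabla|v|,\nabla|v|\ran$. The bookkeeping here — tracking which factor of $(p-2)$, $1/p$, or power of $|u|$ attaches to which term and checking that the constants collapse to $1-2/p=(p-2)/p$ and its square — is the part I expect to be most error-prone.

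For the lower-order terms I would insert the same gradient identity into $-\lan{\bf b}\nabla u,|u|^{p-2}u\ran$ and $\lan u,\overline{\bf c}\nabla(|u|^{p-2}u)\ran$ and take real parts. The parts of ${\bf b}$ and ${\bf c}$ pairing with $\Imm(\overline u\,\nabla u)$ reassemble, using $\Imm(\overline v\,\nabla v)=|u|^{p-2}\Imm(\overline u\,\nabla u)$, into $\lan\Imm({\bf b}+{\bf c}),\Imm(\overline v\,\nabla v)\ran$. The parts pairing with $\nabla|u|$ are exact divergences, since $|u|^{p-1}\nabla|u|=p^{-1}\nabla|u|^{p}=p^{-1}\nabla|v|^{2}$; integrating by parts against the distributions ${\bf b},{\bf c}$ (this is where the measure/distribution nature of the coefficients enters, through the duality with $\Cspt^1$ and the fact that $|v|^{2}=|u|^{p}\in\Cspt^1$ for $p>1$) turns them, together with the term $-a|u|^{2}$, into $\Real(\dive({\bf b}/p-{\bf c}/p')-a)|v|^{2}$, the weights $1/p$ and $1/p'$ emerging from the arithmetic of $(p-2)$ over the two slots. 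The case $1<p<2$ I would not redo from scratch: testing with $|u|^{p'-2}u$ in the first argument, one has $\Real\,\elle(|u|^{p'-2}u,u)=\Real\,\overline{\elle(|u|^{p'-2}u,u)}$, which is the dissipativity functional of the adjoint form at the exponent $p'>2$; applying the already-proved case and using $1-2/p'=-(1-2/p)$ and $(1-2/p')^{2}=(1-2/p)^{2}$, one verifies directly that the stated condition is left invariant.

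The step I expect to be the genuine obstacle is neither of these computations but the passage from ``for all $v$ in the range of the map $u\mapsto|u|^{(p-2)/2}u$'' to ``for all $v\in\Cspt^1(\Om)$''. The identity above only establishes the inequality for those $v$ that are actual images under this map; for $p>2$ the map is not onto $\Cspt^1(\Om)$, since its inverse $v\mapsto|v|^{(2-p)/p}v$ fails to be $C^{1}$ at the zeros of $v$. I would close this gap by an approximation argument: the integrands are in fact bounded near $\{v=0\}$, because $|\nabla|v||\leq|\nabla v|$ and $\big||v|^{-1}\overline v\big|\leq 1$, so the functional on the left of the statement is continuous in a suitable $C^{1}$-type topology; one then approximates an arbitrary $v\in\Cspt^1(\Om)$ by images of the map (perturbing $v$ away from zero and adjusting) and passes to the limit. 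This is also the point at which the remark about extending the integrand by zero on $\{v=0\}$ is justified, and it completes the equivalence in both directions.
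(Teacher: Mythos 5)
Your computational core coincides with the proof in \cite{cialmaz1} (reproduced in \cite{cialmazbook}): the identities for $\nabla(|u|^{p-2}u)$ and $\nabla|u|$, the substitution $v=|u|^{(p-2)/2}u$, the recombination $|u|^{p-1}\nabla|u|=p^{-1}\nabla(|v|^{2})$ followed by integration by parts for the terms in ${\bf b}$, ${\bf c}$, $a$, and the reduction of the case $1<p<2$ to the adjoint form at the exponent $p'$ are exactly the steps taken there. You are also right that for $p\geq 2$ the map $u\mapsto|u|^{(p-2)/2}u$ sends $\Cspt^{1}(\Om)$ into $\Cspt^{1}(\Om)$, so the implication ``inequality for all $v$ $\Rightarrow$ dissipativity'' is immediate, and the whole difficulty sits in the converse implication, as you say.

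Your way of closing that implication, however, has a genuine gap. You claim the functional in the statement is continuous in a ``$C^{1}$-type topology'' (because the integrand is bounded by $C|\nabla v|^{2}$) and propose to approximate $v$ by images of the map after ``perturbing $v$ away from zero''. Boundedness gives domination, not continuity, and in the setting of this lemma---coefficients which are measures, with the integrand extended by zero on $\{v=0\}$---the functional is genuinely discontinuous under perturbations that move the zero set: if some $a^{hk}$ has an atom at a point $x_{0}$ with $v(x_{0})=0$ but $\nabla v(x_{0})\neq 0$, the convention assigns that atom the value $0$, whereas for $v+\delta\chi$ with $v(x_{0})+\delta\chi(x_{0})\neq 0$ the atom contributes a quantity (involving $\lan\A\nabla v_{\delta},\nabla v_{\delta}\ran$, $|v_{\delta}|^{-1}\overline{v_{\delta}}$ and $\nabla|v_{\delta}|$ at $x_{0}$) that does not vanish as $\delta\to 0$. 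Consequently the limit of your inequalities is $\Phi(v)+E\geq 0$, where $E$ is an extra contribution carried by the singular part of the coefficients on $\{v=0\}$; since $E$ can be strictly positive (e.g.\ a positive atom of $\Real\A$), this does not yield $\Phi(v)\geq 0$. The proof in \cite{cialmaz1} avoids this by regularizing the inverse map without moving the zero set: given $v\in\Cspt^{1}(\Om)$, set $u_{\varepsilon}=(|v|^{2}+\varepsilon^{2})^{(2-p)/(2p)}v$, which belongs to $\Cspt^{1}(\Om)$ and vanishes exactly where $v$ does; write $\Real\elle(u_{\varepsilon},|u_{\varepsilon}|^{p-2}u_{\varepsilon})\geq 0$ and let $\varepsilon\to 0^{+}$. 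The rearranged integrands converge pointwise to the stated ones on $\{v\neq 0\}$, vanish identically on $\{v=0\}$ (so the extension-by-zero convention comes out automatically), and are dominated uniformly in $\varepsilon$ by $C|\nabla v|^{2}$, because the negative powers of $(|v|^{2}+\varepsilon^{2})$ produced by $\nabla u_{\varepsilon}$ cancel the positive ones produced by $\nabla(|u_{\varepsilon}|^{p-2}u_{\varepsilon})$, e.g.\ $(|v|^{2}+\varepsilon^{2})^{(2-p)/2}|v|^{p-2}\leq 1$. Dominated convergence with respect to the measures $|a^{hk}|$, $|b_{j}|$, $|c_{j}|$ then gives precisely the stated inequality for $v$. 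Replacing your density-plus-continuity step by this zero-set-preserving regularization completes the argument.
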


This result has several consequences.
The first one is a necessary condition for
the $L^p$-dissipativity.

\begin{corollary}[\cite{cialmaz1}]
    If the operator $A$ is $L^{p}$-dissipative, we have
   \begin{equation}
        \lan \Real \A \xi,\xi\ran \geq 0
       \label{eq:mainpart}
   \end{equation}
(in the sense of measures) for any $\xi\in\RR^{n}$.
\end{corollary}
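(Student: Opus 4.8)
The plan is to use the necessary half of Lemma~\ref{lemma:1} and feed it a family of rapidly oscillating test functions, so that the part of the integrand quadratic in the frequency, which carries exactly $\lan\Real\A\xi,\xi\ran$, dominates every other contribution. Concretely, I fix $\xi\in\RR^{n}$ and a non-negative $\psi\in\Cspt^{\infty}(\Om)$, and for $t>0$ I set $v(x)=\psi(x)\,e^{it\lan\xi,x\ran}$. This $v$ belongs to $\Cspt^{1}(\Om)$ and satisfies $|v|=\psi$, which is smooth, so no difficulty arises from the terms containing $\nabla(|v|)$. Inserting this $v$ into the inequality of Lemma~\ref{lemma:1} yields, for each $t$, a valid inequality; the idea is then to divide by $t^{2}$ and let $t\to+\infty$.

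The scaling computation is the heart of the matter. From $\de_{k}v=e^{it\lan\xi,x\ran}(it\xi_{k}\psi+\de_{k}\psi)$ one gets $\de_{k}v\,\overline{\de_{h}v}=(it\xi_{k}\psi+\de_{k}\psi)(-it\xi_{h}\psi+\de_{h}\psi)$, whose leading term is $t^{2}\xi_{k}\xi_{h}\psi^{2}$; hence $\lan\A\nabla v,\nabla v\ran=t^{2}\lan\A\xi,\xi\ran\psi^{2}+O(t)$, and after taking real parts $\Real\lan\A\nabla v,\nabla v\ran=t^{2}\lan\Real\A\xi,\xi\ran\psi^{2}+O(t)$. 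Since $|v|=\psi$ and $\nabla(|v|)=\nabla\psi$ do not depend on $t$, the two correction terms inside the first bracket are $O(t)$ and $O(1)$. Moreover $\overline{v}\nabla v=it\,\xi\,\psi^{2}+\psi\nabla\psi$, so $\Imm(\overline{v}\nabla v)=t\,\xi\,\psi^{2}$ is $O(t)$, while the final integrand, carrying $|v|^{2}=\psi^{2}$, is $O(1)$. Because $\psi$ and $\nabla\psi$ are bounded with compact support and the entries $a^{hk}$ are finite on compacta, each grouping is a genuine polynomial in $t$ with finite coefficients.

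Dividing the inequality of Lemma~\ref{lemma:1} by $t^{2}$ and letting $t\to+\infty$, every $O(t)$ and $O(1)$ piece disappears and one is left with
$$
\int_{\Om}\lan\Real\A\xi,\xi\ran\,\psi^{2}\geq 0
$$
for every non-negative $\psi\in\Cspt^{\infty}(\Om)$. It remains to upgrade this to non-negativity of the real measure $\mu=\lan\Real\A\xi,\xi\ran$: given $f\in\Cspt(\Om)$ with $f\geq 0$, the function $\sqrt{f}$ is continuous and compactly supported, so mollifying produces $g_{\varepsilon}\in\Cspt^{\infty}(\Om)$ with $g_{\varepsilon}\geq 0$, supports in a fixed compact set, and $g_{\varepsilon}^{2}\to f$ uniformly; applying the displayed inequality with $\psi=g_{\varepsilon}$ and using that $\mu$ is finite on compacta gives $\int_{\Om}f\,d\mu\geq 0$, whence $\mu\geq 0$. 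The only genuinely delicate points are this last measure-theoretic upgrade and the bookkeeping that no hidden $t^{2}$-term survives in the lower-order pieces; the rest is routine once the oscillating ansatz is chosen.
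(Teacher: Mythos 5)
Your proposal is correct and follows essentially the same route as the paper's proof (in \cite{cialmaz1}): one plugs the oscillating test functions $v=\psi\,e^{it\lan\xi,x\ran}$, with $\psi\in\Cspt^{\infty}(\Om)$ nonnegative, into the necessary condition of Lemma~\ref{lemma:1}, notes that only the term $t^{2}\lan\Real\A\xi,\xi\ran\psi^{2}$ survives after dividing by $t^{2}$ and letting $t\to\infty$, and concludes positivity of the measure $\lan\Real\A\xi,\xi\ran$ from the arbitrariness of $\psi$. Your final measure-theoretic upgrade via $\sqrt{f}$ and mollification is a correct way to pass from $\int_{\Om}\psi^{2}\,d\mu\geq 0$ to $\mu\geq 0$.
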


Obviously condition \eqref{eq:mainpart} is not sufficient for
the $L^p$-dissipativity of the form $\elle$.

\begin{corollary}[\cite{cialmaz1}]\label{cor:1}
    Let $\alpha,\beta$ two real constants. If
   \begin{equation}
   \begin{gathered}\label{polyn}
  \frac{4}{p\,p'}\lan \Real \A\xi,\xi\ran + \lan \Real \A\eta,\eta\ran
       +2 \lan (p^{-1}\Imm\A + p'^{-1}\Imm\A^{*}) \xi,\eta \ran +\\
       \lan \Imm({\bf b} + {\bf c}),\eta\ran 
       -
       2 \lan \Real (\alpha{\bf b}/p - \beta {\bf c}/p'),\xi\ran
       +\\
       \Real\left[\dive\left((1-\alpha){\bf b}/p -  (1-\beta){\bf c}/p'\right) - a
       \right]\geq 0
\end{gathered}
\end{equation}
       for any $\xi,\eta\in\RR^{n}$,
    the operator $A$ is $L^{p}$-dissipative.
\end{corollary}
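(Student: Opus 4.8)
The plan is to deduce the statement from Lemma \ref{lemma:1}: it suffices to show that the hypothesis \eqref{polyn} forces the integrand appearing there to be nonnegative (as a measure), so that the whole integral is nonnegative and the $L^{p}$-dissipativity follows. The work is therefore entirely pointwise, on the set $\{v\neq 0\}$ where the integrand is not extended by zero.

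First I would fix the change of variables that turns the gradient of $v$ into two real vectors. Writing $v=|v|\,\omega$ with $|\omega|=1$ on $\{v\neq 0\}$, one has $\Real(\overline v\,\nabla v)=|v|\,\nabla|v|$ and $\Imm(\overline v\,\nabla v)=|v|^{2}\,{\bf t}$, where ${\bf t}:=\Imm(\overline\omega\,\nabla\omega)$ is a real vector, and $\nabla v=\omega(\nabla|v|+i|v|{\bf t})$. I then substitute this into the principal (second order) part of the integrand of Lemma \ref{lemma:1},
$$
P:=\Real\Big[\lan\A\nabla v,\nabla v\ran-(1-2/p)\lan(\A-\A^{*})\nabla|v|,|v|^{-1}\overline v\,\nabla v\ran-(1-2/p)^{2}\lan\A\nabla|v|,\nabla|v|\ran\Big].
$$
Collecting the terms according to whether they pair $\nabla|v|$ with itself, ${\bf t}$ with itself, or the two together, the key computation is that the coefficient of $\lan\Real\A\nabla|v|,\nabla|v|\ran$ equals $1-(1-2/p)^{2}=4/(pp')$, that the coefficient of $\lan\Real\A\,{\bf t},{\bf t}\ran$ is $|v|^{2}$, and that the cross terms — where the factor $(1-2/p)$ in the middle term is precisely what breaks the symmetry — recombine into $2\lan(p^{-1}\Imm\A+p'^{-1}\Imm\A^{*})\nabla|v|,|v|{\bf t}\ran$. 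In other words $P$ equals the quadratic part of \eqref{polyn} evaluated at $\xi=|v|^{-1}\nabla|v|$, $\eta={\bf t}$, multiplied by $|v|^{2}$.

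Next I would treat the lower order terms. Using $\Imm(\overline v\,\nabla v)=|v|^{2}{\bf t}$, the term $\lan\Imm({\bf b}+{\bf c}),\Imm(\overline v\,\nabla v)\ran$ becomes $|v|^{2}\lan\Imm({\bf b}+{\bf c}),{\bf t}\ran$, matching the linear-in-$\eta$ term of \eqref{polyn}. The parameters $\alpha,\beta$ enter through a partial integration by parts, carried out distributionally since ${\bf b},{\bf c}$ are only measures: writing $\dive({\bf b}/p)=\alpha\,\dive({\bf b}/p)+(1-\alpha)\dive({\bf b}/p)$ and integrating by parts the $\alpha$-fraction against $|v|^{2}$ (legitimate because $|v|^{2}=v\overline v\in\Cspt^{1}(\Om)$, with $\nabla|v|^{2}=2|v|\nabla|v|$), and likewise for ${\bf c}$ with weight $\beta$, produces the first order contribution $-2|v|\,\lan\Real(\alpha{\bf b}/p-\beta{\bf c}/p'),\nabla|v|\ran$ and leaves the residual divergence $\dive((1-\alpha){\bf b}/p-(1-\beta){\bf c}/p')$. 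Together with the untouched term $-\Real a\,|v|^{2}$, these reproduce exactly the remaining terms of \eqref{polyn}, again with the weight $|v|^{2}$.

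Assembling the pieces, the whole integrand of Lemma \ref{lemma:1} is identically $|v|^{2}$ times the left-hand side of \eqref{polyn} evaluated at $\xi=|v|^{-1}\nabla|v|$, $\eta={\bf t}$ (the factors $|v|^{-1}$ being harmless, since the $|v|^{2}$ cancels them). As \eqref{polyn} is nonnegative for all real $\xi,\eta$ by hypothesis, the integrand is nonnegative on $\{v\neq0\}$, hence the integral in Lemma \ref{lemma:1} is nonnegative and $A$ is $L^{p}$-dissipative. I expect the main obstacle to be the bookkeeping in the cross terms of $P$: one must handle the conjugate-linearity of the Hermitian product and the identities $\Real\A^{*}=(\Real\A)^{T}$, $\Imm\A^{*}=-(\Imm\A)^{T}$ carefully to see that the antisymmetric contributions cancel while the remaining ones assemble precisely into the weighted combination $p^{-1}\Imm\A+p'^{-1}\Imm\A^{*}$; the integration by parts is then routine once one notes that it is $|v|^{2}$, not $|v|$, that is differentiated.
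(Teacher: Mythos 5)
Your proposal is correct and follows essentially the same route as the proof in \cite{cialmaz1} (to which this survey attributes the corollary without reproducing the argument): reduction to Lemma \ref{lemma:1}, the factorization $v=|v|\,\omega$ which, after the $\alpha,\beta$-weighted distributional integration by parts against $|v|^{2}=v\overline{v}$, identifies the integrand with $|v|^{2}$ times the left-hand side of \eqref{polyn} at $\xi=|v|^{-1}\nabla|v|$, $\eta=\Imm(\overline{\omega}\,\nabla\omega)$, and your bookkeeping of the cross terms (the coefficient $1-(1-2/p)^{2}=4/(pp')$, the splitting $1\mp(1-2/p)=2/p,\ 2/p'$, and the identity $\Imm\A^{*}=-(\Imm\A)^{t}$) is exactly what makes the combination $p^{-1}\Imm\A+p'^{-1}\Imm\A^{*}$ appear. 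The only step you state too casually is the last one: since the entries of $\A$, ${\bf b}$, ${\bf c}$ and $a$ are measures/distributions rather than functions, ``pointwise nonnegativity of the integrand on $\{v\neq0\}$'' should be justified by approximating the continuous fields $\xi(x),\eta(x)$ on $\{v\neq 0\}$ by piecewise constant ones and using that \eqref{polyn} for constant vectors asserts nonnegativity of a measure on every Borel set (the localization to $\{v\neq0\}$ being harmless because all lower-order terms carry continuous factors vanishing on $\{v=0\}$, and the zero-order coefficient is itself a nonnegative measure by \eqref{polyn} with $\xi=\eta=0$).
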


Generally speaking,  conditions \eqref{polyn} are not
    necessary, as the following example shows. 
   \begin{example}     
        let $n=2$ and
	$$
	\A=\left(\begin{array}{cc}
	1 & i\gamma \\ -i\gamma & 1
	\end{array}\right)
	$$
	where $\gamma$ is a real constant, ${\bf b}={\bf c}=a=0$. In this case
polynomial \eqref{polyn}
	is given by
	$$
	(\eta_{1}-\gamma\xi_{2})^{2}+	(\eta_{2}-\gamma\xi_{1})^{2}
-(\gamma^{2}-4/(pp'))|\xi|^{2}.
	$$
	
	Taking $\gamma^{2} >
	4/(pp')$,  condition \eqref{polyn} is not satisfied,
	while we have the $L^{p}$-dissipativity, because the corresponding
operator $A$
	is the Laplacian.
    \end{example}
    
    Note that in this 
    example the matrix
    $\Imm\A$ is not symmetric. Later we 
    give another example showing that, even for symmetric matrices
    $\Imm\A$,
    conditions \eqref{polyn} are not necessary 
    for $L^{p}$-dissipa\-tivity (see Example \ref{ex:1}).
Nevertheless in the next section
    we show that the conditions are necessary for the $L^{p}$-dissipativity, 
    provided the operator
    $A$ has no lower order terms and the matrix
    $\Imm\A$ is symmetric (see Theorem \ref{th:1new} and Remark \ref{rm:1}).

In the case of an operator \eqref{eq:N} without lower order
terms:
\begin{equation}
    Au=\dive(\A\nabla u)
    \label{eq:nolower}
\end{equation}
with the coefficients $a^{hk}\in
(\Cspt(\Om))^{*}$,
we can give
an algebraic necessary and sufficient condition for the
$L^{p}$-dissipativity.

\begin{theorem}[\cite{cialmaz1}]\label{th:1new}
    Let the matrix $\Imm\A$ be symmetric, i.e. $\Imm\A^{t}=\Imm\A$. The  form
$$
        \elle(u,v)=\int_{\Om}\lan \A\nabla u,\nabla v\ran
$$
    is $L^{p}$-dissipative if and only if
    \begin{equation}\label{eq:24}
	|p-2|\, |\lan \Imm\A\xi,\xi\ran| \leq 2 \sqrt{p-1}\,
	\lan \Real\A\xi,\xi\ran
    \end{equation}
	for any $\xi\in\RR^{n}$, where $|\cdot|$ denotes the total
	variation.
\end{theorem}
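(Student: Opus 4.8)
The plan is to use the integral criterion of Lemma~\ref{lemma:1} to convert $L^p$-dissipativity into a pointwise algebraic inequality for a quadratic form in two real vector variables, and then to check that this inequality is exactly \eqref{eq:24}. Setting ${\bf b}={\bf c}=0$, $a=0$ in Lemma~\ref{lemma:1}, dissipativity is equivalent to the nonnegativity, for every $v\in\Cspt^1(\Om)$, of the integral of
\[
\Real\Big[\lan\A\nabla v,\nabla v\ran-(1-2/p)\lan(\A-\A^*)\nabla|v|,|v|^{-1}\overline v\nabla v\ran-(1-2/p)^2\lan\A\nabla|v|,\nabla|v|\ran\Big].
\]
Writing $v=\rho\,e^{i\theta}$ where $v\neq0$ and putting $\xi=\nabla\rho$, $\eta=\rho\nabla\theta$, one has $\nabla v=e^{i\theta}(\xi+i\eta)$, $\nabla|v|=\xi$ and $|v|^{-1}\overline v\nabla v=\xi+i\eta$. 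The hypothesis $\Imm\A^t=\Imm\A$ produces the crucial cancellations: the cross terms $\lan\Imm\A\,\xi,\eta\ran-\lan\Imm\A\,\eta,\xi\ran$ vanish, and the antisymmetric part of $\Real\A$ drops out of all surviving diagonal quadratic forms. After these simplifications the integrand reduces to
\[
Q(\xi,\eta)=\frac{4}{pp'}\lan\Real\A\,\xi,\xi\ran+\lan\Real\A\,\eta,\eta\ran-2(1-2/p)\lan\Imm\A\,\xi,\eta\ran,
\]
that is, the left-hand side of \eqref{intro:form}.

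The \emph{sufficiency} of \eqref{eq:24} will then follow quickly, because with ${\bf b}={\bf c}=a=0$ condition \eqref{polyn} is precisely $Q\geq0$; hence $Q\geq0$ implies $L^p$-dissipativity by Corollary~\ref{cor:1}, and it remains only to prove the algebraic equivalence $Q\geq0\iff\eqref{eq:24}$. For this I first observe that $\Real\A\geq0$ is forced (it is the necessary condition \eqref{eq:mainpart}), so it suffices to treat $\Real\A$ strictly positive, the degenerate case following by replacing $\Real\A$ with $\Real\A+\varepsilon I$ and letting $\varepsilon\to0$. Replacing $\Real\A$ by its symmetric part and performing the whitening substitution $\tilde\xi=(\Real\A)^{1/2}\xi$, $\tilde\eta=(\Real\A)^{1/2}\eta$, the form becomes $\frac{4}{pp'}|\tilde\xi|^2+|\tilde\eta|^2-2(1-2/p)\lan\widetilde S\tilde\xi,\tilde\eta\ran$ with $\widetilde S=(\Real\A)^{-1/2}\,\Imm\A\,(\Real\A)^{-1/2}$ symmetric. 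Minimizing over $\tilde\eta$ (optimum at $\tilde\eta=(1-2/p)\widetilde S\tilde\xi$) shows $Q\geq0$ is equivalent to $(1-2/p)^2\|\widetilde S\|^2\leq 4/(pp')$, i.e. $\|\widetilde S\|\leq 2\sqrt{p-1}/|p-2|$; rewriting this operator-norm bound as $|p-2|\,|\lan\widetilde S\tilde\xi,\tilde\xi\ran|\leq 2\sqrt{p-1}\,|\tilde\xi|^2$ and undoing the substitution yields exactly \eqref{eq:24}.

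The real work is the \emph{necessity}, where one must pass from the single integral inequality back to the pointwise (measure-a.e.) bound \eqref{eq:24}. The obstruction is that $\xi=\nabla\rho$ and $\eta=\rho\nabla\theta$ are not free vectors but the gradients of the modulus and phase of one test function, so the integral inequality does not obviously give $Q(\xi,\eta)\geq0$ for independent $\xi,\eta$. I would overcome this by localization: fixing a point $x_0$ and arbitrary $\xi,\eta\in\RR^n$, take $v=\chi\,\rho\,e^{i\theta}$ with $\chi$ a cutoff at $x_0$ and $\rho,\theta$ affine, $\rho(x)=1+\xi\cdot(x-x_0)$, $\theta(x)=\eta\cdot(x-x_0)$, so that near $x_0$ one has $\nabla|v|\equiv\xi$ and $\rho\nabla\theta\equiv\eta$. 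Since the integrand is then locally constant in $\xi,\eta$, shrinking the support onto $x_0$ forces $Q(\xi,\eta)\geq0$ at almost every point with respect to a measure dominating all the $|a^{hk}|$; in terms of Radon--Nikodym densities this is the pointwise algebraic condition, from which the total-variation form of \eqref{eq:24} follows by optimizing the sign of $\lan\Imm\A\,\xi,\xi\ran$.

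The step I expect to be the main obstacle is precisely this last extraction: recovering a pointwise inequality for matrix-valued \emph{measures} from an integral constraint, while correctly producing the total variation $|\cdot|$ and verifying that the affine test functions genuinely realize arbitrary independent pairs $(\xi,\eta)$ in the limit. By contrast, the algebraic equivalence and the sufficiency direction are comparatively routine once the integrand has been reduced to $Q$.
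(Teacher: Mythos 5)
Your reduction of the integrand of Lemma~\ref{lemma:1} (with ${\bf b}={\bf c}=0$, $a=0$) to the form $Q(\xi,\eta)=\frac{4}{pp'}\lan\Real\A\xi,\xi\ran+\lan\Real\A\eta,\eta\ran-2(1-2/p)\lan\Imm\A\xi,\eta\ran$ is correct, and so is the sufficiency half: Corollary~\ref{cor:1} plus the whitening/minimization-in-$\eta$ proof of the equivalence $Q\geq 0\iff\eqref{eq:24}$ (cf.\ Remark~\ref{rm:1}); the $\varepsilon$-regularization takes care of degenerate $\Real\A$.

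The necessity argument, however, fails, and precisely at the step you flagged. With your test functions $v=\chi\rho e^{i\theta}$, where the phase $\theta(x)=\eta\cdot(x-x_0)$ is \emph{affine}, one has $\nabla|v|=\nabla(\chi\rho)$ and $\Imm(\overline v\nabla v)/|v|=|v|\nabla\theta=\chi\rho\,\eta$, so the whole $\Imm\A$ contribution to the integral is
\[
-2(1-2/p)\int_{\Om}\lan \Imm\A\,\nabla|v|,\,|v|\,\eta\ran
=-(1-2/p)\int_{\Om}\lan \Imm\A\,\nabla(|v|^{2}),\eta\ran .
\]
When $\A$ has constant coefficients this is the integral of the divergence of a compactly supported field, hence it vanishes \emph{identically} --- for every cutoff $\chi$, every $\rho$, and every $\xi,\eta$. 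Consequently no limit along this family can ever reproduce the cross term $-2(1-2/p)\lan\Imm\A\xi,\eta\ran$: from the dissipativity inequality you can extract at most $\frac{4}{pp'}\lan\Real\A\xi,\xi\ran+\lan\Real\A\eta,\eta\ran\geq0$, i.e.\ $\Real\A\geq0$, which says nothing about $\Imm\A$. For instance, with $\A=I+i\lambda S$ constant, $S$ symmetric and $\lambda$ large, your scheme detects no obstruction, although \eqref{eq:24} fails and the operator is not $L^p$-dissipative (Theorem~\ref{th:const}). There is a second, independent defect: concentrating the support makes the cutoff energy $\int\rho^{2}\lan\Real\A\nabla\chi,\nabla\chi\ran$ scale like $\varepsilon^{n-2}$ against $\varepsilon^{n}$ for the term you want to keep, so shrinking supports never localizes a second-order form; pointwise information must come from oscillation or mollification, not concentration.

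The missing idea is that the phase must be \emph{coupled to the modulus} rather than chosen independently of it. Taking $\theta=\gamma\log\rho$ (i.e.\ $v=\rho^{1+i\gamma}$, up to an approximation making it admissible), one gets $|v|\nabla\theta=\gamma\nabla\rho$, and the integrand collapses to the single quadratic form $\lan[(\tfrac{4}{pp'}+\gamma^{2})\Real\A-2(1-2/p)\gamma\,\Imm\A]\nabla\rho,\nabla\rho\ran$, in which the $\Imm\A$ term now survives with a definite sign. Dissipativity then yields, for every $\gamma\in\RR$ and every $\xi$, the measure inequality $(\tfrac{4}{pp'}+\gamma^{2})\lan\Real\A\xi,\xi\ran-2(1-2/p)\gamma\lan\Imm\A\xi,\xi\ran\geq0$, and minimizing over $\gamma$ (optimal $\gamma=\pm 2/\sqrt{pp'}$) gives exactly $|p-2|\,|\lan\Imm\A\xi,\xi\ran|\leq2\sqrt{p-1}\,\lan\Real\A\xi,\xi\ran$, total variation included. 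This coupling of phase and modulus, together with the measure-theoretic care you anticipated (extraction of densities, or mollification of the coefficients), is the mechanism of the proof in \cite{cialmaz1}; your affine-phase localization cannot be repaired to replace it.
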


\begin{remark} \label{rm:1} One can prove that condition \eqref{eq:24} holds if and only if
\begin{equation}\label{eq:condnew}
\frac{4}{p\,p'}\lan \Real\A\xi,\xi\ran +\lan \Real\A\eta,\eta\ran
-2(1-2/p)\lan\Imm\A\xi,\eta\ran \geq 0
\end{equation}
for any $\xi,\eta\in\RR^{n}$. This means that   conditions
\eqref{polyn} are
necessary and sufficient for the operators
considered in Theorem \ref{th:1new}.
\end{remark}

\begin{remark}
    Let us assume that either $A$ has lower order terms
	or they are absent and $\Imm\A$ is not symmetric.
Using the same arguments as in Theorem \ref{th:1new},
one could prove that \eqref{eq:24} (or, equivalenty, \eqref{eq:condnew}) is
still a necessary condition
for $A$ to be $L^{p}$-dissipative. However, in general, it is not
sufficient. This is shown by the next example (see also
Theorem \ref{th:const} below  for the particular case of constant
coefficients).
\end{remark}

\begin{example} Let $n=2$ and let $\Om$ be a bounded domain. Denote by $\sigma$ 
a  not identically vanishing real function in $\Cspt^{2}(\Om)$ and let 
$\lambda\in\RR$.
Consider operator \eqref{eq:nolower} with
$$
\A=\left(\begin{array}{cc}
	1 & i\lambda\de_{1}(\sigma^{2}) \\ -i\lambda\de_{1}(\sigma^{2}) & 1
	\end{array}\right)
$$
i.e.
$$
Au=\de_{1}(\de_{1}u+i\lambda\de_{1}(\sigma^{2})\, \de_{2}u) + 
\de_{2}(-i\lambda\de_{1}(\sigma^{2})\,
\de_{1}u+\de_{2}u),
$$
where $\de_{i}=\de/\de x_{i}$ ($i=1,2$).

By definition, we have $L^{2}$-dissipativity if and only if
$$
\Real\int_{\Om}((\de_{1}u+i\lambda\de_{1}(\sigma^{2})\, \de_{2}u)
\de_{1}\overline{u}
+ (-i\lambda\de_{1}(\sigma^{2})\,
\de_{1}u+\de_{2}u)\de_{2}\overline{u})\, dx \geq 0
$$
for any $u\in\Cspt^{1}(\Om)$, i.e. if and only if
$$
\int_{\Om}|\nabla u|^{2}dx -2\lambda\int_{\Om}\de_{1}(\sigma^{2})
\Imm(\de_{1}\overline{u}\,\de_{2}u)\, dx \geq 0
$$
for any $u\in\Cspt^{1}(\Om)$.
Taking $u=\sigma\, \exp(itx_{2})$ ($t\in\RR$), we obtain, in particular,
\begin{equation}
    t^{2}\int_{\Om}\sigma^{2}dx -t\lambda 
    \int_{\Om}(\de_{1}(\sigma^{2}))^{2} dx  + \int_{\Om}|\nabla \sigma|^{2}dx \geq 0.
    \label{eq:inpartic}
\end{equation}

Since
$$
\int_{\Om}(\de_{1}(\sigma^{2}))^{2} dx > 0,
$$
we can choose $\lambda\in\RR$ so that
\eqref{eq:inpartic} is impossible
for all $t\in\RR$.
Thus  $A$ is not $L^{2}$-dissipative, although
\eqref{eq:24} is satisfied.

Since $A$ can be written as
$$
	Au=\Delta u - i\lambda(\de_{21}(\sigma^{2})\,
	\de_{1}u - \de_{11}(\sigma^{2})\, \de_{2}u),
$$
the same example shows
that \eqref{eq:24} is not sufficient for the $L^{2}$-dissipativity
in the presence of lower order terms, even if $\Imm\A$ is symmetric.
\end{example}

Generally speaking, it is impossible
to obtain an algebraic
characterization for
an
operator with
lower order terms.
Indeed, let us consider,
for example, the operator
$$
Au=\Delta u + a(x)u
$$
in a bounded domain ${\Omega}\subset{\mathbb{R}}^{n}$
with zero Dirichlet
boundary data. Denote by ${\lambda}_{1}$
the first eigenvalue of the Dirichlet problem for
the Laplace equation
in ${\Omega}$. A sufficient condition for
the $L^{2}$-dissipativity of $A$
has the form
$\operatorname{Re} a\leq {\lambda}_{1}$, and we cannot
give an algebraic characterization of
${\lambda}_{1}$.

Consider, as another example, the operator
$$
A =\Delta + \mu
$$
where $\mu$ is a nonnegative Radon measure on ${\Omega}$.
The operator $A$ is $L^{p}$-dissipative if and only if
\begin{equation}
\int_{{\Omega}}|w|^{2}d\mu \leq \frac{4}{p\,p'}\int_{{\Omega}}|\nabla w|^{2}dx
\label{eq:schro}
\end{equation}
for any $w\in {C_{0}}^{\infty}({\Omega})$ (cf. Lemma
\ref{lemma:1}).
Maz'ya \cite{mazya62, mazya64, mazyasimon}
proved
that the following condition is sufficient for
\eqref{eq:schro}:
\begin{equation}
\frac{\mu(F)}{\hbox{\rm cap}_{{\Omega}}(F)} \leq \frac{1}{p\,p'}
\label{eq:schro1}
\end{equation}
for all compact set $F\subset {\Omega}$
and the following condition is necessary:
\begin{equation}
\frac{\mu(F)}{\hbox{\rm cap}_{{\Omega}}(F)} \leq \frac{4}{p\,p'}
\label{eq:schro2}
\end{equation}
for all compact set $F\subset {\Omega}$.
Here, $\hbox{\rm cap}_{{\Omega}}(F)$ is the capacity of $F$
with respect  to
${\Omega}$, i.e.,
\begin{equation}\label{eq:defcap}
\hbox{\rm cap}_{{\Omega}}(F)=\inf\Big\{\int_{{\Omega}}|\nabla u|^{2}dx\, :\
u\in {C_{0}}^{\infty}({\Omega}),\ u\geq 1\ \hbox{\rm on}\ F\Big\}.
\end{equation}

The condition \eqref{eq:schro1}
is not necessary and the condition \eqref{eq:schro2} is not sufficient.

However, it is possible to find necessary and
sufficient conditions in
the case of constant coefficients.
Namely, let $A$ be the differential operator
\begin{equation*}
    Au=\nabla^{t}(\A\nabla u) + {\bf b}\nabla u + au
\end{equation*}
with constant complex coefficients. Without loss of generality
 we 
assume that the matrix $\A$ is symmetric.

\begin{theorem}[\cite{cialmaz1}]\label{th:const}
    Let $\Om$ be an open set in $\RR^{n}$ which contains balls of
    arbitrarily large radius. The operator $A$
    is $L^{p}$-dissipative if and only if there exists a real constant
    vector $V$ such that
    \begin{gather*}
        2\Real \A V+\Imm {\bf b}=0
 \\
	\Real a +  \lan \Real\A V,V\ran \leq 0 
    \end{gather*}
    and the inequality
    \begin{equation}\label{eq:V1}
	|p-2|\, |\lan \Imm\A\xi,\xi\ran| \leq 2 \sqrt{p-1}\,
	\lan \Real\A\xi,\xi\ran
    \end{equation}
   holds for any $\xi\in\RR^{n}$.
\end{theorem}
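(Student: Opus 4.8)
The plan is to run everything through the integral criterion of Lemma \ref{lemma:1}, specialized to the present operator, where ${\bf c}=0$, the coefficients are constant (so $\dive{\bf b}=0$), and $\A=\A^{t}$ forces $\A-\A^{*}=2i\,\Imm\A$. Writing $v=\rho e^{i\theta}$ on $\{v\neq0\}$ with $\rho=|v|$, so that the globally defined field $\Imm(\overline v\nabla v)=\rho^{2}\nabla\theta$, and using the symmetry of $\A$ to kill the term coupling $\nabla\rho$ and $\nabla\theta$ inside $\Real\lan\A\nabla v,\nabla v\ran$, a direct polar-coordinate computation reduces the criterion to $\int_{\Om}I\geq0$ for all $v\in\Cspt^{1}(\Om)$, where, with $\gamma=1-2/p$ (so $1-\gamma^{2}=4/(pp')$) and $\eta:=\rho\nabla\theta$,
\[
I=\frac{4}{pp'}\lan\Real\A\,\nabla\rho,\nabla\rho\ran+\lan\Real\A\,\eta,\eta\ran-2\gamma\lan\Imm\A\,\nabla\rho,\eta\ran+\rho\lan\Imm{\bf b},\eta\ran-\Real a\,\rho^{2}.
\]
This explicit reduction is the computational backbone; its first three terms are exactly the integrand of the principal part, governed by \eqref{eq:condnew} (equivalently \eqref{eq:24}) via Remark \ref{rm:1}.

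For sufficiency I assume a real $V$ with the three stated properties is given and complete the square in $\eta$. Using $2\Real\A V+\Imm{\bf b}=0$ one gets $\lan\Real\A\,\eta,\eta\ran+\rho\lan\Imm{\bf b},\eta\ran=\lan\Real\A(\eta-\rho V),\eta-\rho V\ran-\rho^{2}\lan\Real\A V,V\ran$, and splitting the cross term as $-2\gamma\lan\Imm\A\,\nabla\rho,\eta\ran=-2\gamma\lan\Imm\A\,\nabla\rho,\eta-\rho V\ran-\gamma\lan\Imm\A V,\nabla(\rho^{2})\ran$ exhibits $I$ as a sum of three pieces: the \eqref{eq:condnew}-form in the pair $(\nabla\rho,\,\eta-\rho V)$, pointwise $\geq0$ by \eqref{eq:24} and Remark \ref{rm:1}; the term $-\rho^{2}(\lan\Real\A V,V\ran+\Real a)$, which is $\geq0$ by the second hypothesis on $V$; and the divergence term $-\gamma\lan\Imm\A V,\nabla(\rho^{2})\ran$, whose integral over $\Om$ vanishes because $\Imm\A V$ is constant and $\rho$ has compact support. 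Hence $\int_{\Om}I\geq0$.

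For necessity I proceed in two stages. To recover \eqref{eq:24}, I insert the dilated test functions $v_{\varepsilon}(x)=v(x/\varepsilon)$, whose shrinking supports always fit into $\Om$; the quadratic-gradient terms of $\int I$ scale like $\varepsilon^{n-2}$ while the ${\bf b}$– and $a$–terms scale like $\varepsilon^{n-1}$ and $\varepsilon^{n}$, so dividing by $\varepsilon^{n-2}$ and letting $\varepsilon\to0$ leaves precisely the $L^{p}$-dissipativity of the principal part $\dive(\A\nabla)$, whence Theorem \ref{th:1new} yields \eqref{eq:24}. To produce $V$, I test with $v=\phi\,e^{i\lan\omega,x\ran}$, $\phi\geq0$ in $\Cspt^{1}$ and $\omega\in\RR^{n}$: then $\nabla\rho=\nabla\phi$, $\eta=\phi\,\omega$, the cross term integrates to zero since $\int\phi\nabla\phi=\frac12\int\nabla(\phi^{2})=0$, and $\int I\geq0$ becomes $\frac{4}{pp'}\int\lan\Real\A\nabla\phi,\nabla\phi\ran+\big(\lan\Real\A\omega,\omega\ran+\lan\Imm{\bf b},\omega\ran-\Real a\big)\int\phi^{2}\geq0$. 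Taking $\phi=\chi(\,\cdot\,/R)$ supported in a ball of radius $R\subset\Om$ (this is where the hypothesis that $\Om$ contains arbitrarily large balls enters) drives $\int|\nabla\phi|^{2}/\int\phi^{2}\to0$, so in the limit $q(\omega):=\lan\Real\A\omega,\omega\ran+\lan\Imm{\bf b},\omega\ran-\Real a\geq0$ for every $\omega\in\RR^{n}$. Since $\Real\A\geq0$ by \eqref{eq:mainpart}, finiteness of $\inf q$ forces $\Imm{\bf b}\in\mathrm{range}(\Real\A)$, i.e. a real $V$ with $2\Real\A V+\Imm{\bf b}=0$; evaluating $q$ at its minimizer $V$ gives $q(V)=-\lan\Real\A V,V\ran-\Real a\geq0$, which is the second condition.

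The main obstacle is the necessity of the lower-order conditions. One must light upon the one-parameter family $\phi\,e^{i\lan\omega,x\ran}$ and, crucially, exploit the large-ball hypothesis to make the Dirichlet (gradient) term negligible, thereby collapsing the integral criterion into pointwise nonnegativity of the scalar quadratic $q(\omega)$; the vector $V$ and the inequality $\Real a+\lan\Real\A V,V\ran\leq0$ then fall out of minimizing $q$, but this is exactly the step where the geometry of $\Om$ is indispensable and cannot be circumvented. The remaining difficulty is purely bookkeeping in the polar reduction: the cancellation of the $\nabla\rho$–$\nabla\theta$ coupling via the symmetry of $\A$, and the careful extension by zero on the set where $v$ vanishes.
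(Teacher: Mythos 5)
Your proof is correct and follows essentially the same route as the original argument in \cite{cialmaz1} which the paper only quotes: reduction via Lemma \ref{lemma:1} to the polar-form integral inequality, sufficiency by completing the square around the gauge vector $V$ and killing the term $\gamma\lan \Imm\A V,\nabla(|v|^{2})\ran$ by integration over the compact support, and necessity via modulated test functions $\phi\,e^{i\lan\omega,x\ran}$ on arbitrarily large balls (for the conditions on $V$) together with a dilation argument reducing to Theorem \ref{th:1new} and Remark \ref{rm:1} (for \eqref{eq:V1}). In particular, you correctly avoid the trap highlighted by Example \ref{ex:1} — the joint polynomial \eqref{polyn} need \emph{not} be nonnegative here — since the cross term produced by the shift $\eta\mapsto\eta-\rho V$ is annihilated only after integration, not pointwise.
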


\begin{corollary}[\cite{cialmaz1}]\label{cor:const}
    Let $\Om$ be an open set in $\RR^{n}$ which contains balls of
       arbitrarily large radius. Let us suppose that the matrix $\Real\A$ 
       is
       not singular. The operator $A$
       is $L^{p}$-dissipative if and only if \eqref{eq:V1} holds and
       \begin{equation}
           4\Real a \leq -\lan (\Real\A)^{-1}\Imm {\bf b}, \Imm {\bf b}\ran\, .
           \label{eq:newV2}
       \end{equation}
\end{corollary}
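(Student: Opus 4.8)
The plan is to derive Corollary \ref{cor:const} as a direct specialization of Theorem \ref{th:const}, exploiting the invertibility of $\Real\A$ to eliminate the auxiliary vector $V$. The key observation is that inequality \eqref{eq:V1} is identical to \eqref{eq:24}, so that condition carries over verbatim; the real content is to show that the existence of a real constant vector $V$ satisfying the two displayed equations in Theorem \ref{th:const} is equivalent, under the nonsingularity of $\Real\A$, to the single inequality \eqref{eq:newV2}.

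First I would solve the linear equation $2\Real\A\, V + \Imm {\bf b} = 0$ for $V$. Since $\Real\A$ is not singular, this equation has the unique solution
$$
V = -\tfrac12 (\Real\A)^{-1}\Imm {\bf b}.
$$
Thus, when $\Real\A$ is invertible, such a $V$ always exists and is uniquely determined, so the first condition in Theorem \ref{th:const} is automatically satisfiable and imposes no restriction beyond fixing $V$. The remaining task is simply to substitute this explicit $V$ into the inequality $\Real a + \lan \Real\A V, V\ran \leq 0$.

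Next I would compute $\lan \Real\A V, V\ran$ with the above $V$. Writing $M = \Real\A$ (a symmetric, since $\A$ is assumed symmetric, and invertible matrix) and $w = \Imm {\bf b}$, we have $V = -\tfrac12 M^{-1} w$, so that
$$
\lan M V, V\ran = \tfrac14 \lan M M^{-1} w, M^{-1} w\ran
 = \tfrac14 \lan w, M^{-1} w\ran
 = \tfrac14 \lan (\Real\A)^{-1}\Imm {\bf b}, \Imm {\bf b}\ran,
$$
where I used that $M$ is symmetric so $M^{-1}$ is symmetric and the pairing can be rearranged. Substituting into $\Real a + \lan \Real\A V, V\ran \leq 0$ gives
$$
\Real a + \tfrac14 \lan (\Real\A)^{-1}\Imm {\bf b}, \Imm {\bf b}\ran \leq 0,
$$
which upon multiplying by $4$ is exactly \eqref{eq:newV2}.

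Combining these observations completes the equivalence: under the hypothesis that $\Real\A$ is nonsingular, the two conditions on $V$ in Theorem \ref{th:const} collapse to the single inequality \eqref{eq:newV2}, while \eqref{eq:V1} is retained unchanged. I expect no genuine obstacle here, as the argument is purely a substitution; the only points requiring mild care are verifying that the symmetry of $\Real\A$ (inherited from the standing assumption that $\A$ is symmetric) justifies the symmetry of $(\Real\A)^{-1}$ used in rearranging the quadratic form, and checking that the uniqueness of $V$ in the invertible case means no additional solvability hypothesis is needed.
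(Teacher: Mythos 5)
Your proposal is correct and takes exactly the intended route: the corollary is the specialization of Theorem \ref{th:const} in which the nonsingularity of $\Real\A$ forces the unique choice $V=-\tfrac12(\Real\A)^{-1}\Imm{\bf b}$, and substituting it into $\Real a + \lan\Real\A V,V\ran\leq 0$ yields \eqref{eq:newV2} while \eqref{eq:V1} carries over unchanged. The only cosmetic remark is that the rearrangement $\lan \Imm{\bf b},(\Real\A)^{-1}\Imm{\bf b}\ran=\lan(\Real\A)^{-1}\Imm{\bf b},\Imm{\bf b}\ran$ already follows from the symmetry of the real inner product, so the symmetry of $\Real\A$ is not actually needed at that step.
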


\begin{example}\label{ex:1}
Let $n=1$ and $\Om=\RR^{1}$. Consider the operator
$$
\left( 1 + 2\, \frac{\sqrt{p-1}}{p-2}\, i\right) u'' +2i u' -u,
$$
where $p\neq 2$ is fixed. Conditions
\eqref{eq:V1} and \eqref{eq:newV2} are satisfied
 and this operator is $L^{p}$-dissipative, in view of Corollary \ref{cor:const}.

On the other hand, the polynomial considered in Corollary \ref{cor:1} is
$$
Q(\xi,\eta)=\left(2\, \frac{\sqrt{p-1}}{p}\, \xi -\eta\right)^{2}
+2\eta+1
$$
which is not nonnegative for any $\xi,\eta\in\RR$.
This shows that, in general, condition \eqref{polyn} is not necessary for the
$L^{p}$-dissipativity, even if the matrix $\Imm\A$ is symmetric.
\end{example}

\section{$L^p$-dissipativity for systems}\label{sec:systems}

In this Section we describe  criteria we have obtained for
some systems of partial differential equations.

\subsection{Systems of the first order}

Let $\B^h$  and 
$\Cm^{h}$ ($h=1,\ldots,n$) be
$m\times m$ matrices whith complex-valued
entries $b^h_{ij}, c^{h}_{ij} \in (C_{0}(\Om))^{*}$
($1\leq i,j \leq m$).
Let $\Dm$ stand for a matrix whose elements
$d_{ij}$ are complex-valued distributions
in $(C^{1}_{0}(\Om))^{*}$.

Let $\elle(u,v)$ be the sesquilinear form
\begin{equation}\label{formsyst1agen}
\elle(u,v) = 
\int_{\Om} \lan \B^{h}\de_{h}u,v \ran - \lan \Cm^{h}u,\de_{h}v\ran
+ \lan \Dm u,v\ran
\end{equation}
defined in $(C^{1}_{0}(\Om))^{m}\times (C^{1}_{0}(\Om))^{m}$,
where $\de_{h}=\de/\de x_{h}$.

The form $\elle$ is related to the 
system of partial differential operators of the first order:
\begin{equation}\label{syst1agen}
    E u = \B^{h}\de_{h}u + \de_{h}(\Cm^{h}u) + \Dm u
\end{equation}

As in the scalar case,
we say that the form $\elle$ is $L^{p}$-dissipative if
\eqref{eq:defdis1}-\eqref{eq:defdis2} hold 
for all $u\in (C^1_0(\Om))^{m}$.


We have found necessary and sufficient conditions for
the $L^p$-dissipativity when
\begin{equation*}
    E u = \B^{h}\de_{h}u  + \Dm u
\end{equation*}
and the entries of the matrices $\B^{h}, \Dm$ are locally integrable
functions. Moreover we suppose that also $\de_h\B^{h}$ (where the derivatives are in the sense of distributions) is a matrix with locally integrable entries.

\begin{theorem}[\cite{cialmaz17}]\label{th:main}
    The form
    $$
    \elle(u,v) = 
\int_{\Om} \lan \B^{h}\de_{h}u,v \ran 
+ \lan \Dm u,v\ran
    $$ is $L^{p}-dissipative$
    if, and only if, the following conditions are satisfied:
    \begin{enumerate}
\item     \begin{gather}
    \B^{h}(x) = b_{h}(x)\, I, \qquad \text{if } p\neq 2,
        \label{eq:n=1pn2x} \\
        \B^{h}(x) = (\B^h)^*(x), \qquad \text{if } p=2,
        \label{eq:n=1p=2x}
    \end{gather}
    for almost any $x\in\Om$ and $h=1,\ldots,n$. Here 
    $b_{h}$ are real 
      locally integrable functions ($1\leq h \leq n$).
\item    \begin{equation}
        \Real \lan (p^{-1} \de_{h}\B^{h}(x) - \Dm(x)) \zeta,\zeta\ran  \geq 0
        \label{eq:disug}
    \end{equation}
    for any $\zeta\in \CC^{m}$, $|\zeta|=1$ and for almost any $x\in\Om$.
    \end{enumerate}
\end{theorem}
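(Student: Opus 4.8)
The whole argument turns on a single pointwise identity for $\Real\elle(u,|u|^{p-2}u)$, after which both implications are short. The plan is to establish it first for $p\ge 2$ (where the weight $|u|^{p-2}$ is continuous and harmless at the zeros of $u$) and to recover $1<p<2$ at the end by duality. Writing $u=r\,\om$ with $r=|u|\ge 0$ and $\om=u/|u|$ a unit field, so that $\Real\lan\de_h\om,\om\ran=0$, and splitting each coefficient $\B^h=S^h+A^h$ into its Hermitian part $S^h=\tfrac12(\B^h+(\B^h)^*)$ and anti-Hermitian part $A^h=\tfrac12(\B^h-(\B^h)^*)$, I would substitute into $\Real\elle(u,|u|^{p-2}u)$, use $r^{p-1}\de_h r=\tfrac1p\de_h(r^p)$ to integrate by parts the radial group, and use $\Real\lan\B^h\de_h\om,\om\ran+\Real\lan\B^h\om,\de_h\om\ran=2\Real\lan S^h\de_h\om,\om\ran$. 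The outcome (with the overall sign fixed by the dissipativity convention) is
\begin{gather*}
\Real\elle(u,|u|^{p-2}u)=\int_\Om r^p\Big[\Real\lan(p^{-1}\de_h\B^h-\Dm)\om,\om\ran\\
-(1-2/p)\,\Real\lan S^h\de_h\om,\om\ran-\Real\lan A^h\de_h\om,\om\ran\Big],
\end{gather*}
and the hypothesis that $\de_h\B^h$ be locally integrable is precisely what legitimizes the integration by parts and the appearance of the last coefficient.

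Sufficiency is then immediate. Under \eqref{eq:n=1pn2x}--\eqref{eq:n=1p=2x} the two first-order-in-$\om$ terms disappear: for $p\neq 2$ one has $S^h=b_hI$ and $A^h=0$, so $(1-2/p)\Real\lan S^h\de_h\om,\om\ran=(1-2/p)b_h\Real\lan\de_h\om,\om\ran=0$; for $p=2$ the factor $1-2/p$ kills the first term and $A^h=0$ the second. What remains is $\int_\Om r^p\,\Real\lan(p^{-1}\de_h\B^h-\Dm)\om,\om\ran$, nonnegative by \eqref{eq:disug}.

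For necessity I would read off the two conditions at a common Lebesgue point $x_0$ of the coefficients. To isolate \eqref{eq:n=1pn2x}--\eqref{eq:n=1p=2x}, take $u_\epsilon(x)=\phi((x-x_0)/\epsilon)$; after rescaling, the first-order-in-$\om$ part of the identity is of size $\epsilon^{n-1}$ while the $\de_h\B^h$ and $\Dm$ terms are of size $\epsilon^n$, so dividing by $\epsilon^{n-1}$ and letting $\epsilon\to0$ reduces dissipativity to the nonnegativity, for every profile $\phi$, of the frozen first-order functional $F_{x_0}(\phi)=\Real\int|\phi|^{p-2}\lan\B^h(x_0)\de_h\phi,\phi\ran$. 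Since $F_{x_0}(\phi(-\,\cdot))=-F_{x_0}(\phi)$, nonnegativity forces $F_{x_0}\equiv 0$. The $r,\om$ form of $F_{x_0}$ then shows that for each $h$ the real bilinear expression $(1-2/p)\Real\lan S^h\eta,\zeta\ran+\Real\lan A^h\eta,\zeta\ran$ vanishes whenever $\Real\lan\eta,\zeta\ran=0$ — admissible because at a point one may prescribe $\om(x_0)=\zeta$ and $\de_h\om(x_0)=\eta$ freely, subject only to $\Real\lan\eta,\zeta\ran=0$. A real bilinear form on $\CC^m\cong\RR^{2m}$ vanishing on the orthogonality relation is a real multiple of it, so $(1-2/p)S^h+A^h=\mu I$ for some real $\mu$; comparing Hermitian and anti-Hermitian parts gives $A^h=0$ always and, for $p\neq2$, $S^h=b_hI$ with $b_h$ real — exactly \eqref{eq:n=1pn2x}, while for $p=2$ only $A^h=0$ survives, i.e.\ \eqref{eq:n=1p=2x}. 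With this structure the bracket collapses to its last term, and localizing $r^p$ near $x_0$ with $\om$ pointing in a fixed direction $\zeta$ yields \eqref{eq:disug}. Finally $1<p<2$ follows from the equivalence of $L^p$-dissipativity of $\elle$ with $L^{p'}$-dissipativity of the adjoint form, whose coefficients are $-(\B^h)^*$ and $\Dm^*-\de_h(\B^h)^*$, under which \eqref{eq:n=1pn2x}--\eqref{eq:disug} are invariant.

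The hard part is the necessity of \eqref{eq:n=1pn2x}--\eqref{eq:n=1p=2x}: one must justify the localization and rescaling with merely locally integrable coefficients, and then carry out the pointwise linear-algebra step showing that the vanishing of the bilinear expression forces the scalar-matrix (for $p\neq2$) or Hermitian (for $p=2$) structure. The oddness trick together with the lemma that a bilinear form annihilated by the orthogonality relation is proportional to the inner product are what make this step clean.
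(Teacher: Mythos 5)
Since this survey only \emph{states} Theorem \ref{th:main} and refers to \cite{cialmaz17} for its proof, I can only judge your argument on its own terms. Its architecture is sound, and I checked the key steps: the rescaling $u_\epsilon=\phi((x-x_0)/\epsilon)$ at a common Lebesgue point, which isolates the frozen functional $F_{x_0}$; the oddness trick forcing $F_{x_0}\equiv 0$; the realizability of any data $\om(x_0)=\zeta$, $\de_h\om(x_0)=\eta_h$ with $\Real\lan\eta_h,\zeta\ran=0$ (take $\om=w/|w|$, $w=\zeta+\sum_h y_h\eta_h$); the linear-algebra lemma giving $(1-2/p)S^h+A^h=\mu I$, hence $A^h=0$ always and $S^h=b_hI$ real when $p\neq 2$; and the duality reduction of $1<p<2$ to $p'>2$ via the adjoint form with coefficients $-(\B^h)^*$ and $\Dm^*-\de_h(\B^h)^*$, under which conditions \eqref{eq:n=1pn2x}--\eqref{eq:disug} are indeed invariant. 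One point of precision: computing directly from \eqref{formsyst1agen} one gets the \emph{negative} of your identity, namely
\begin{gather*}
\Real\elle(u,|u|^{p-2}u)=\int_\Om r^p\Big[\Real\lan(\Dm-p^{-1}\de_h\B^{h})\om,\om\ran\\
+(1-2/p)\Real\lan S^h\de_h\om,\om\ran+\Real\lan A^h\de_h\om,\om\ran\Big].
\end{gather*}
Your hedge ``sign fixed by the dissipativity convention'' is not cosmetic: definition \eqref{eq:defdis1} as written in this survey is actually incompatible with \eqref{eq:disug} (take $m=n=1$, $\B^1=b(x)$ real, $\Dm=0$: then $\Real\elle(u,|u|^{p-2}u)=-p^{-1}\int b'|u|^p$, so \eqref{eq:defdis1} forces $b'\leq 0$ while \eqref{eq:disug} demands $b'\geq 0$). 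The theorem is correct under the convention of \cite{cialmaz17}, i.e.\ $\Real\elle(u,|u|^{p-2}u)\leq 0$; you should state that convention explicitly rather than let two sign slips cancel.

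The one genuine gap is that your master identity is asserted for \emph{all} $u\in(\Cspt^1(\Om))^m$, but its derivation uses the polar field $\om=u/|u|$ and an integration by parts of $p^{-1}\de_h(r^p)\Real\lan\B^h\om,\om\ran$; the function $\Real\lan\B^h\om,\om\ran$ is neither defined nor weakly differentiable across the zero set of $u$ (on top of $\B^h$ being merely $L^1_{\text{loc}}$), so the identity is not established in exactly the generality in which your sufficiency proof invokes it. Necessity survives untouched provided you rescale the \emph{raw} form (no integration by parts) to obtain $F_{x_0}$, and then feed $F_{x_0}$ only with $\phi=\rho\,\om$ where $\rho\geq 0$ and the unit field $\om$ are globally smooth --- there everything you do is legitimate. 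For sufficiency the cleanest repair is to bypass the polar decomposition altogether: under \eqref{eq:n=1pn2x} one has $\Real\lan\B^h\de_h u,|u|^{p-2}u\ran=p^{-1}b_h\,\de_h(|u|^p)$ with $|u|^p\in \Cspt^1(\Om)$, so integrating by parts against $b_h$ (weakly differentiable with $L^1_{\text{loc}}$ derivative) is justified and the form equals $\int_\Om|u|^p\,\Real\lan(\Dm-p^{-1}\de_h\B^h)\om,\om\ran$ with the integrand extended by zero on $\{u=0\}$, which has the correct sign by \eqref{eq:disug}; for $p=2$ use instead $2\Real\lan\B^h\de_h u,u\ran=\de_h\lan\B^h u,u\ran-\lan\de_h\B^h u,u\ran$, valid since $\B^h$ is Hermitian. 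With these two repairs your proof closes.
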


As far as the more general operator \eqref{syst1agen} is concerned,
we have the following result, under the assumption that $\B^{h}$, 
$\Cm^{h}$,
$\Dm$, $\de_{h}\B^{h}$ and $\de_{h}\Cm^{h}$ are matrices with complex locally
integrable entries.

\begin{theorem}[\cite{cialmaz17}]
The form \eqref{formsyst1agen} is $L^p$-dissipative if, and only if,
the following conditions are satisfied
    \begin{enumerate}
\item \begin{gather*}
    \B^{h}(x)+ \Cm^h(x) = b_{h}(x)\, I, \qquad \text{if } p\neq 2,
   \\
        \B^{h}(x) + \Cm^h(x) = (\B^h)^*(x) + (\Cm^h)^*(x), \qquad \text{if } p=2,
    \end{gather*}
    for almost any $x\in\Om$ and $h=1,\ldots,n$. Here 
    $b_{h}$ are real 
      locally integrable functions ($1\leq h \leq n$).
\item    \begin{equation*}
        \Real \lan (p^{-1} \de_{h}\B^{h}(x) - p'^{-1}\de_{h}\Cm^{h}(x) - \Dm(x)) \zeta,\zeta\ran  
        \geq 0
            \end{equation*}
    for any $\zeta\in \CC^{m}$, $|\zeta|=1$ and for almost any $x\in\Om$.
    \end{enumerate}
\end{theorem}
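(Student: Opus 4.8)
The plan is to reduce the general first-order system to the special case already settled in Theorem \ref{th:main} by absorbing the $\Cm^h$ terms through an integration by parts. Starting from the form \eqref{formsyst1agen}, I would rewrite the middle term $-\lan \Cm^h u,\de_h v\ran$ using the distributional identity $\de_h\lan \Cm^h u,v\ran = \lan \de_h\Cm^h\, u,v\ran + \lan \Cm^h\de_h u,v\ran + \lan \Cm^h u,\de_h v\ran$. Since $u,v\in (C^1_0(\Om))^m$ have compact support, integrating this identity over $\Om$ gives $\int_\Om \lan \Cm^h u,\de_h v\ran = -\int_\Om \lan \de_h\Cm^h\, u,v\ran - \int_\Om \lan \Cm^h\de_h u,v\ran$. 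Substituting this back, the form becomes
\begin{equation*}
\elle(u,v)=\int_\Om \lan(\B^h+\Cm^h)\de_h u,v\ran + \lan(\Dm + \de_h\Cm^h)u,v\ran,
\end{equation*}
which now has exactly the shape treated in Theorem \ref{th:main}, with $\B^h$ replaced by $\widetilde\B^h:=\B^h+\Cm^h$ and $\Dm$ replaced by $\widetilde\Dm:=\Dm+\de_h\Cm^h$. The regularity hypotheses are exactly what make this legitimate: $\Cm^h$ and $\de_h\Cm^h$ are assumed locally integrable, so $\widetilde\B^h$ and $\widetilde\Dm$ inherit local integrability, and $\de_h\widetilde\B^h=\de_h\B^h+\de_h\Cm^h$ is locally integrable as well.

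The next step is simply to apply Theorem \ref{th:main} to the transformed form. Condition \eqref{eq:n=1pn2x}--\eqref{eq:n=1p=2x} for $\widetilde\B^h$ reads $\widetilde\B^h=b_h I$ when $p\neq 2$, i.e. $\B^h+\Cm^h=b_h I$, and $\widetilde\B^h=(\widetilde\B^h)^*$ when $p=2$, i.e. $\B^h+\Cm^h=(\B^h)^*+(\Cm^h)^*$; these are precisely the asserted conditions in item (1). Condition \eqref{eq:disug} becomes
\begin{equation*}
\Real\lan(p^{-1}\de_h\widetilde\B^h-\widetilde\Dm)\zeta,\zeta\ran\geq 0,
\end{equation*}
and expanding $p^{-1}\de_h\widetilde\B^h-\widetilde\Dm = p^{-1}\de_h\B^h + p^{-1}\de_h\Cm^h - \Dm - \de_h\Cm^h = p^{-1}\de_h\B^h - (1-p^{-1})\de_h\Cm^h - \Dm = p^{-1}\de_h\B^h - p'^{-1}\de_h\Cm^h - \Dm$, using $1-p^{-1}=p'^{-1}$, yields exactly the inequality in item (2). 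Thus both directions of the equivalence transfer at once.

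The only point requiring care is the interplay between the $p=2$ and $p\neq 2$ cases of condition (1), because Theorem \ref{th:main} states these separately. When $p=2$ the reduction gives $\widetilde\B^h=(\widetilde\B^h)^*$, i.e. $\B^h+\Cm^h$ is Hermitian, which matches the claimed condition verbatim. When $p\neq 2$ the proportionality $\widetilde\B^h=b_h I$ forces $\B^h+\Cm^h$ to be a real scalar multiple of the identity, again as stated; here one must check that $b_h$ is forced to be real, which follows because $p^{-1}\de_h\widetilde\B^h$ must have a real diagonal for \eqref{eq:disug} to have a chance of holding for all unit $\zeta$, exactly as in the proof of Theorem \ref{th:main}. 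I do not expect a genuine obstacle: the substance of the argument is entirely contained in Theorem \ref{th:main}, and the present statement is a corollary obtained by the symmetrizing change $\B^h\mapsto\B^h+\Cm^h$, $\Dm\mapsto\Dm+\de_h\Cm^h$. The main thing to verify carefully is that the integration by parts is valid in the distributional sense under the stated local-integrability hypotheses, and that no boundary contributions arise, which is guaranteed by compact support of $u$ and $v$.
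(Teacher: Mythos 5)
Your proposal is correct: the integration by parts $-\int_\Om\lan \Cm^h u,\de_h v\ran = \int_\Om\lan (\de_h\Cm^h)u,v\ran + \int_\Om\lan \Cm^h\de_h u,v\ran$ (valid since $\Cm^h$ has $W^{1,1}_{\mathrm{loc}}$ entries and $u,v$ are compactly supported $C^1$) shows the form \eqref{formsyst1agen} coincides with the form of Theorem \ref{th:main} with $\B^h$ replaced by $\B^h+\Cm^h$ and $\Dm$ by $\Dm+\de_h\Cm^h$, after which the stated conditions follow by direct substitution using $1-p^{-1}=p'^{-1}$. The survey paper itself states this theorem without proof, citing \cite{cialmaz17}, and your reduction is exactly the argument used there, so there is nothing to add beyond noting that your aside about re-verifying the realness of $b_h$ is unnecessary --- that realness is already part of the conclusion of Theorem \ref{th:main} applied as a black box.
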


\subsection{Systems of the second order}\label{subsec:system2}

In this section we consider the class of systems of partial differential
equations of the form
\begin{equation}
Eu = \de_h(\A^{h}(x)\de_h u)  + 
\B^{h}(x)\de_{h}u  + \Dm(x)u,
\label{eq:Ecomp}
\end{equation}
where $\A^{h}$, $\B^h$ and $\Dm$ are $m\times m$ matrices with 
complex locally integrable
entries.

If the operator \eqref{eq:Ecomp} has no lower order terms, we have
necessary and sufficient conditions:

\begin{theorem}[\cite{cialmaz2}]
The operator
$$
\de_h(\A^{h}(x)\de_h u) 
$$
is $L^p$-dissipative if and only if
\begin{equation}\label{eq:cond2ord}
\begin{gathered}
 \Real \lan \A^{h}(x) \lambda,\lambda\ran-(1-2/p)^{2}\Real\lan 
  \A^{h}(x)\om,\om\ran (\Real \lan\lambda,\om\ran)^{2}
   \cr
    -
    (1-2/p)\Real(\lan \A^{h}(x)\om,\lambda\ran -
    \lan \A^{h}(x)\lambda,\om\ran)
       \Real \lan \lambda,\om\ran   \geq 0
\end{gathered}
\end{equation}
for  almost every $x\in\Om$ and for
   every $\lambda,\om\in\CC^{m}$, $|\om|=1$, $h=1,\ldots,n$.
\end{theorem}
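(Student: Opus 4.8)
The plan is to reduce the $L^p$-dissipativity to a pointwise algebraic inequality, following the scheme used for the scalar operator in Lemma \ref{lemma:1} and Theorem \ref{th:1new}. The form attached to the operator is $\elle(u,v)=\int_\Om\sum_{h=1}^n\lan\A^h\de_h u,\de_h v\ran$, and, for $p\geq 2$, $L^p$-dissipativity means $\Real\elle(u,|u|^{p-2}u)\geq0$ for every $u\in(\Cspt^{1}(\Om))^{m}$. First I would perform the symmetrizing substitution $v=|u|^{p/2-1}u$, so that $u=|v|^{2/p-1}v$ and $|u|^{p-2}u=|v|^{1-2/p}v$. Using $\de_h(|v|^{s}v)=|v|^{s}\bigl(\de_h v+s\,\rho_h\om\bigr)$, where $\om=v/|v|$ and $\rho_h=\Real\lan\de_h v,\om\ran$, with $s=\pm(1-2/p)$, the prefactors $|v|^{\pm(1-2/p)}$ cancel and one obtains the identity
\begin{equation*}
\Real\elle(u,|u|^{p-2}u)=\int_\Om\sum_{h=1}^n\Real\bigl\lan\A^h(\de_h v-\tau\rho_h\om),\,\de_h v+\tau\rho_h\om\bigr\ran\,,
\end{equation*}
with $\tau=1-2/p$. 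Expanding each inner product shows that the $h$-th integrand is exactly the left-hand side of \eqref{eq:cond2ord} evaluated at $\lambda=\de_h v$ and $\om=v/|v|$; this is where the factors $(1-2/p)$ and $(1-2/p)^2$ of the statement originate. As in Lemma \ref{lemma:1}, the integrand is extended by zero on the zero set of $v$, and the passage between $u$ and $v$ is justified by the same density argument, since $v\mapsto u$ need not be $C^1$ where $v$ vanishes.

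Granted this identity, sufficiency is immediate: if \eqref{eq:cond2ord} holds at almost every $x$ for all $\lambda\in\CC^m$ and all unit $\om\in\CC^m$, then each summand of the integrand is nonnegative pointwise (take $\lambda=\de_h v(x)$, $\om=v(x)/|v(x)|$), so the integral is nonnegative and $\elle$ is $L^p$-dissipative.

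For necessity I would argue by contradiction at a common Lebesgue point $x_0$ of the coefficients $\A^1,\dots,\A^n$. Suppose \eqref{eq:cond2ord} fails for some index $h_0$ and some pair $(\lambda_0,\om_0)$ with $|\om_0|=1$. The task is to build test functions $v_\varepsilon$ concentrating at $x_0$ for which the $h_0$-term of the integrand reproduces the forbidden pair $(\lambda_0,\om_0)$, while the remaining $n-1$ terms and the error from freezing the coefficients at $x_0$ stay of lower order; feeding $v_\varepsilon$ into the identity above then forces the integral to be negative, contradicting dissipativity. The delicate point, and the main obstacle, is the design of $v_\varepsilon$: one must simultaneously prescribe the direction $v_\varepsilon/|v_\varepsilon|\approx\om_0$ and an essentially arbitrary gradient $\de_{h_0}v_\varepsilon$ realising $\lambda_0$, whose component transverse to $\om_0$ has to be produced by a separately scaled, rapidly oscillating vector perturbation, while keeping $\de_k v_\varepsilon$ for $k\neq h_0$ negligible. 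Balancing the longitudinal and transverse scales so that the full quadratic form in $\lambda_0$, and not merely its principal part $\Real\lan\A^{h_0}\om_0,\om_0\ran$, is tested is the crux of the argument.

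Finally, the range $1<p<2$ is handled through the definition \eqref{eq:defdis2}, working with $\Real\elle(|u|^{p'-2}u,u)$ and the substitution $v=|u|^{p'/2-1}u$. Now the first argument carries the exponent $1-2/p'=-(1-2/p)$ and the second the exponent $-(1-2/p')=1-2/p$; since in $\elle$ it is the second argument that is conjugated, the two factors $|v|^{\pm(1-2/p)}$ again cancel and one is led to exactly the same integrand $\Real\lan\A^h(\de_h v-\tau\rho_h\om),\de_h v+\tau\rho_h\om\ran$ as before. Hence \eqref{eq:cond2ord} characterizes $L^p$-dissipativity for every $p\in(1,\infty)$.
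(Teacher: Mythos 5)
Your route is the right one, and in fact it is the route of the original proof (this paper only quotes the theorem from \cite{cialmaz2}; no proof is reproduced here): reduce to an integral inequality via the substitution $v=|u|^{p/2-1}u$, identify the integrand with the quadratic form in \eqref{eq:cond2ord}, then prove sufficiency pointwise and necessity by concentration. Your algebra checks out: with $\tau=1-2/p$, $\om=v/|v|$ and $\rho_h=\de_h|v|=\Real\lan \de_h v,\om\ran$, one has $\de_h\bigl(|v|^{s}v\bigr)=|v|^{s}(\de_h v+s\rho_h\om)$, the powers $|v|^{\pm\tau}$ cancel, and expanding $\Real\lan \A^h(\de_h v-\tau\rho_h\om),\de_h v+\tau\rho_h\om\ran$ reproduces exactly the left-hand side of \eqref{eq:cond2ord} at $\lambda=\de_h v$; the same computation with $\tau'=1-2/p'=-\tau$ settles the case $1<p<2$. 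Modulo the approximation argument needed because $v\mapsto u=|v|^{2/p-1}v$ is not $C^1$ on the zero set of $v$ (the systems analogue of Lemma \ref{lemma:1}, which you correctly invoke rather than reprove), the ``if'' half of the theorem is complete.

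The ``only if'' half, however, is not proved: you state the strategy (contradiction at a common Lebesgue point, concentrating test functions $v_\varepsilon$) but explicitly leave its core --- the construction of $v_\varepsilon$ --- as an acknowledged obstacle, and that construction is the actual content of the necessity proof, not a routine verification. The difficulty is that in the integrand the two arguments of the quadratic form are rigidly coupled: $\om$ must be the direction of $v$ and $\lambda$ its $h$-th derivative. A naive concentration $v_\varepsilon=\mu_\varepsilon\,\om_0$ with scalar $\mu_\varepsilon$ only produces pairs $\lambda\in\CC\,\om_0$, for which \eqref{eq:cond2ord} collapses (for real multiples it reduces to $\tfrac{4}{pp'}\Real\lan\A^{h}\om_0,\om_0\ran|\lambda|^2\geq 0$, i.e.\ mere nonnegativity of the diagonal real part), so no contradiction can be extracted this way. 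To test an arbitrary $\lambda_0\in\CC^m$, with components along $i\om_0$ and transverse to $\om_0$, one must superpose a separately scaled, rapidly oscillating vector perturbation and then verify three things: that $v_\varepsilon/|v_\varepsilon|$ stays within $o(1)$ of $\om_0$ on the relevant set, that the $k\neq h_0$ derivative terms and the error from freezing $\A^{h}$ at $x_0$ are of lower order, and that the leading term converges to the value of the form at $(\lambda_0,\om_0)$ times a positive mass. Until these scales are exhibited and these limits carried out --- which is precisely what \cite{cialmaz2} does --- the necessity direction remains unestablished, so the proposal proves only half of the stated equivalence.
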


Combining this result with Theorem \ref{th:main} we find

\begin{theorem}[\cite{cialmaz17}]
Let $E$ be the operator \eqref{eq:Ecomp},
where $\A^{h}$ are $m\times m$ matrices with 
complex locally integrable
entries and the matrices $\B^h(x)$, $\Dm(x)$ satisfy the hypothesis of Theorem \ref{th:main}.
If \eqref{eq:cond2ord} holds 
for  almost every $x\in\Om$ and for
   every $\lambda,\om\in\CC^{m}$, $|\om|=1$, $h=1,\ldots,n$, and if
conditions \eqref{eq:n=1pn2x}-\eqref{eq:n=1p=2x} and \eqref{eq:disug} are satisfied,
the operator $E$ is $L^p$-dissipative.
\end{theorem}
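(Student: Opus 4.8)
The plan is to obtain the $L^p$-dissipativity of the full operator \eqref{eq:Ecomp} from that of its two natural pieces, exploiting the fact that the dissipativity test is linear in the underlying sesquilinear form. Write $E = E_0 + E_1$, where $E_0 u = \de_h(\A^h(x)\de_h u)$ is the principal part and $E_1 u = \B^h(x)\de_h u + \Dm(x)u$ collects the lower-order terms. Since the operator-to-form correspondence of this section is linear, the form $\elle$ attached to $E$ splits accordingly as $\elle = \elle_0 + \elle_1$, with
$$
\elle_0(u,v) = \int_\Om \lan \A^h\de_h u, \de_h v\ran, \qquad
\elle_1(u,v) = \int_\Om \big( \lan \B^h\de_h u, v\ran + \lan \Dm u, v\ran\big),
$$
the first being the form governed by \eqref{eq:cond2ord} and the second being exactly the form of Theorem \ref{th:main}.

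First I would recall the definition: for $p \geq 2$, the $L^p$-dissipativity of $\elle$ is the inequality $\Real\,\elle(u, |u|^{p-2}u) \geq 0$ for all $u \in (\Cspt^1(\Om))^m$, while for $1 < p < 2$ it is $\Real\,\elle(|u|^{p'-2}u, u) \geq 0$. The crucial and elementary observation is that, for a fixed $u$, the admissible test function ($|u|^{p-2}u$, respectively $|u|^{p'-2}u$) is the same for all three forms; hence by bilinearity
$$
\Real\,\elle(u, |u|^{p-2}u) = \Real\,\elle_0(u, |u|^{p-2}u) + \Real\,\elle_1(u, |u|^{p-2}u),
$$
and likewise in the range $1 < p < 2$. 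No cross terms between $\A^h$ and $(\B^h, \Dm)$ ever arise, because the dissipativity functional is linear, not quadratic, in the form.

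It then suffices to prove that each summand is non-negative under the stated hypotheses. The non-negativity of the principal-part summand is precisely the theorem of \cite{cialmaz2}: assuming \eqref{eq:cond2ord} for almost every $x$ and every $\lambda, \om \in \CC^m$ with $|\om| = 1$ is equivalent to the $L^p$-dissipativity of $E_0$, i.e.\ to $\Real\,\elle_0(u, |u|^{p-2}u) \geq 0$. The non-negativity of the lower-order summand is Theorem \ref{th:main}: since $\B^h, \Dm$ satisfy its hypotheses, conditions \eqref{eq:n=1pn2x}--\eqref{eq:n=1p=2x} and \eqref{eq:disug} are (necessary and) sufficient for the $L^p$-dissipativity of $\elle_1$, i.e.\ for $\Real\,\elle_1(u, |u|^{p-2}u) \geq 0$. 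Adding the two inequalities gives $\Real\,\elle(u, |u|^{p-2}u) \geq 0$, and the parallel argument with $|u|^{p'-2}u$ settles $1 < p < 2$; this is the asserted $L^p$-dissipativity of $E$.

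The main point to be careful about, and the reason the statement is only a sufficient condition rather than a characterization, is that superposition can only propagate dissipativity, never detect the cancellations that a true converse would require: a failure of \eqref{eq:cond2ord} in the principal part could in principle be offset by a favorable lower-order contribution, so one cannot reverse the implication. The only genuinely technical step is the bookkeeping behind the splitting $\elle = \elle_0 + \elle_1$: one must check that the integration-by-parts conventions, and the conjugation in $\lan\cdot,\cdot\ran$, used to pass from $E$ to $\elle$ are the common ones of this section, so that the term $\de_h\B^h$ which surfaces in \eqref{eq:disug} is already absorbed into $\elle_1$ and no residual boundary or distributional terms survive when the two forms are recombined.
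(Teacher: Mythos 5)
Your proof is correct and is essentially the argument the paper intends: the theorem is presented there as the result of ``combining'' the theorem of \cite{cialmaz2} on the principal part $\de_h(\A^{h}\de_h u)$ with Theorem \ref{th:main} on the lower-order part, which is precisely your splitting $\elle=\elle_0+\elle_1$ followed by adding the two non-negativity inequalities obtained from the sufficiency halves of those two characterizations. Your closing caveat about checking that the sign and integration-by-parts conventions make $\elle_1$ exactly the form of Theorem \ref{th:main} is the right point to flag, since the linearity of $\elle\mapsto\Real\elle(u,|u|^{p-2}u)$ is otherwise the entire content of the step.
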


Consider now the operator \eqref{eq:Ecomp} in the scalar
case (i.e. $m=1$)
$$
\de_h(a^h(x) \de_h u) + b^h(x) \de_h u + d(x) u
$$
($a^h, b^h$ and $d$ being scalar functions).
In this case such an operator can be written in the form
\begin{equation}
Eu=\dive (\A(x) \nabla u) + \B(x)\nabla u + d(x)\, u
\label{eq:scalar}
\end{equation}
where  $\A=\{c_{hk}\}$, $c_{hh}=a^h$, $c_{hk}=0$ if $h\neq k$ and
$\B=\{b^h\}$.
For such an operator one can show that 
 \eqref{eq:cond2ord} is equivalent to
 \begin{equation}
 \frac{4}{pp'}\lan \Real \A(x) \xi,\xi\ran + \lan \Real \A(x) \eta,\eta\ran
- 2(1-2/p)\lan \Imm \A(x) \xi,\eta\ran \geq 0
\label{eq:23}
\end{equation}
for almost any $x\in\Om$ and for any $\xi,\eta\in\RR^n$ (see \cite[Remark 4.21, p.115]{cialmazbook}).
Condition \eqref{eq:23} is in turn equivalent to the inequality:
 \begin{equation}
|p-2|\, |\lan \Imm\A(x)\xi,\xi\ran| \leq 2 \sqrt{p-1}\,
	    \lan \Real\A(x)\xi,\xi\ran
\label{eq:24x}
\end{equation}
for almost any $x\in\Om$ and for any $\xi\in\RR^n$  (see \cite[Remark 2.8, p.42]{cialmazbook}).
We have then
\begin{theorem}[\cite{cialmaz17}]
Let $E$ be the scalar operator \eqref{eq:scalar} where $\A$ is a diagonal matrix. 
If inequality \eqref{eq:24x} and 
conditions \eqref{eq:n=1pn2x}-\eqref{eq:n=1p=2x} and \eqref{eq:disug} are satisfied,
the operator $E$ is $L^p$-dissipative.
\end{theorem}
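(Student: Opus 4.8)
The plan is to obtain this statement as the scalar specialization ($m=1$) of the preceding theorem. The operator \eqref{eq:scalar} with diagonal matrix $\A$ is exactly the system \eqref{eq:Ecomp} with $m=1$, $\A^{h}=a^{h}$ equal to the $h$-th diagonal entry of $\A$, $\B^{h}=b^{h}$ and $\Dm=d$. The first- and zeroth-order hypotheses \eqref{eq:n=1pn2x}--\eqref{eq:n=1p=2x} and \eqref{eq:disug} are assumed outright and coincide with those demanded of $\B^{h}$ and $\Dm$ in that theorem, so everything reduces to showing that its second-order hypothesis \eqref{eq:cond2ord} follows from \eqref{eq:24x}.

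The core step is therefore the purely algebraic equivalence, for a diagonal $\A$, of \eqref{eq:cond2ord} (for each $h$), \eqref{eq:23}, and \eqref{eq:24x}. I would carry out the first reduction directly in the $m=1$ setting, where $\lambda,\om$ are complex numbers with $|\om|=1$. Writing $a^{h}=\alpha^{h}+i\beta^{h}$ and putting $\lambda\bar\om=s+it$, so that $|\lambda|^{2}=s^{2}+t^{2}$ because $|\om|=1$, a short computation (using $\om\bar\lambda-\lambda\bar\om=-2it$ and $\Real\lan\A^{h}\om,\om\ran=\alpha^{h}$) collapses \eqref{eq:cond2ord} into
$$
\frac{4}{p\,p'}\,\alpha^{h}s^{2}+\alpha^{h}t^{2}-2(1-2/p)\,\beta^{h}s\,t\geq 0,
$$
where I have used $1-(1-2/p)^{2}=4/(p\,p')$. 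For fixed $\om$ the pair $(s,t)=(\Real(\lambda\bar\om),\Imm(\lambda\bar\om))$ sweeps all of $\RR^{2}$ as $\lambda$ ranges over $\CC$, so this is precisely the $h$-th term of \eqref{eq:23} for the diagonal matrix $\A$; since those terms decouple, summing over $h$ recovers \eqref{eq:23} in full.

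It then remains to pass from \eqref{eq:23} to \eqref{eq:24x}. Because $\A$ is diagonal, $\Imm\A$ is symmetric, so the equivalence between \eqref{intro:form} and \eqref{eq:intro0} valid for symmetric $\Imm\A$ applies; concretely, nonnegativity of the displayed form for all $(s,t)$ is positive semidefiniteness of its $2\times 2$ matrix, whose diagonal forces $\alpha^{h}\geq 0$ and whose determinant condition reads $4(p-1)(\alpha^{h})^{2}\geq(p-2)^{2}(\beta^{h})^{2}$, i.e.\ \eqref{eq:24x} componentwise. With \eqref{eq:cond2ord} established, the preceding theorem applies verbatim and gives the $L^{p}$-dissipativity of $E$.

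The proof is not conceptually hard, and the real care lies in the bookkeeping of the reduction from the complex condition \eqref{eq:cond2ord} to the real scalar condition \eqref{eq:24x}: one must check that $(s,t)$ is genuinely onto $\RR^{2}$ and treat the degenerate directions (such as $\alpha^{h}=0$, which forces $\beta^{h}=0$ when $p\neq 2$) correctly. Since these equivalences are already recorded in \cite[Remark 4.21, p.115 and Remark 2.8, p.42]{cialmazbook}, in the final write-up I would cite them rather than reproduce the computation, leaving only the invocation of the preceding theorem.
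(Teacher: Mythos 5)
Your proposal is correct and follows essentially the same route as the paper: specialize the preceding theorem for systems \eqref{eq:Ecomp} to $m=1$ with diagonal $\A$, and use the equivalence of \eqref{eq:24x} with \eqref{eq:23}, hence with \eqref{eq:cond2ord}, in the scalar diagonal setting --- equivalences the paper simply cites from \cite[Remark 2.8, p.42 and Remark 4.21, p.115]{cialmazbook}. Your explicit computation (reduction of \eqref{eq:cond2ord} to the quadratic form in $(s,t)$ and the $2\times 2$ positive-semidefiniteness criterion) is a correct verification of what the paper delegates to those citations.
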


More generally, consider the scalar operator \eqref{eq:scalar} with
a matrix $\A=\{a_{hk}\}$ not necessarily diagonal. 
The following result holds true.

\begin{theorem}[\cite{cialmaz17}]
Let the matrix $\Imm \A$ be symmetric.
If inequality \eqref{eq:24x} and 
conditions \eqref{eq:n=1pn2x}-\eqref{eq:n=1p=2x} and \eqref{eq:disug} are satisfied,
the operator \eqref{eq:scalar} is $L^p$-dissipative.
\end{theorem}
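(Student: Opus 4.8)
The plan is to reduce everything to Lemma~\ref{lemma:1}, which already furnishes a necessary and sufficient integral criterion for the $L^p$-dissipativity of a form carrying lower order terms. I would apply it to the operator \eqref{eq:scalar}, i.e. to the form obtained by taking ${\bf b}=\B$, ${\bf c}=0$ and $a=d$. The resulting criterion splits into three contributions: a \emph{principal part} integral built only from $\A$, the term $\int_\Om\lan\Imm\B,\Imm(\overline v\nabla v)\ran$, and the zero-order term $\int_\Om\Real(p^{-1}\dive\B-d)|v|^2$. The strategy is to show that the last two are harmless and that the principal part is nonnegative.

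First I would dispose of the lower order contributions. Conditions \eqref{eq:n=1pn2x}--\eqref{eq:n=1p=2x}, read in the scalar case $m=1$, say precisely that the field $\B$ is real valued; hence $\Imm\B\equiv 0$ and the middle integral vanishes identically. Condition \eqref{eq:disug}, again specialized to $m=1$ (where $\de_h\B^h=\dive\B$, $\Dm=d$ and $|\zeta|=1$), reads exactly $\Real(p^{-1}\dive\B-d)\geq 0$ a.e., so the zero-order integral is nonnegative for every $v$. Thus the entire criterion of Lemma~\ref{lemma:1} is implied by the nonnegativity of the principal part integral alone.

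It then remains to prove that
$$
\Real\int_\Om\Big[\lan\A\nabla v,\nabla v\ran-(1-2/p)\lan(\A-\A^{*})\nabla|v|,|v|^{-1}\overline v\nabla v\ran-(1-2/p)^2\lan\A\nabla|v|,\nabla|v|\ran\Big]\geq 0,
$$
which is nothing but the $L^p$-dissipativity condition for the principal operator $\dive(\A\nabla u)$, so that this is in essence Theorem~\ref{th:1new} in its local, $x$-dependent form. To make the reduction explicit I would write $v=\rho\,e^{i\theta}$ with $\rho=|v|$ on $\{v\neq 0\}$, so that $\nabla v=(\nabla\rho+i\rho\nabla\theta)e^{i\theta}$ and $|v|^{-1}\overline v\nabla v=\nabla\rho+i\rho\nabla\theta$. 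Inserting this, taking real parts, and using the symmetry of $\Imm\A$ to annihilate the antisymmetric cross terms, the integrand collapses to
$$
\frac{4}{p\,p'}\lan\Real\A\,\xi,\xi\ran+\lan\Real\A\,\eta,\eta\ran-2(1-2/p)\lan\Imm\A\,\xi,\eta\ran,\qquad \xi=\nabla\rho,\ \eta=\rho\nabla\theta,
$$
which is exactly the left-hand side of \eqref{eq:23}. By the equivalence of \eqref{eq:23} with \eqref{eq:24x} recorded above (correspondingly, of \eqref{eq:condnew} with \eqref{eq:24} in Remark~\ref{rm:1}), the assumed inequality \eqref{eq:24x} forces this quadratic form to be nonnegative pointwise a.e., whence the integral is nonnegative.

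The only genuinely delicate point is this pointwise reduction of the principal integrand: one must justify the polar substitution on the set where $v$ vanishes (the integrand being extended by zero there, as in Lemma~\ref{lemma:1}) and verify that, after taking real parts, every contribution involving the antisymmetric part of $\Imm\A$ disappears precisely because $\Imm\A$ is symmetric. This is the step where symmetry is indispensable: without it the quadratic expression would carry an extra term $\lan(\Imm\A-(\Imm\A)^{t})\eta,\xi\ran$ and one would recover only the weaker sufficient condition \eqref{polyn} rather than \eqref{eq:24x}.
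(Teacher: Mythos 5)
Your proposal is correct and takes essentially the approach the paper intends: the theorem is quoted from \cite{cialmaz17} without a reprinted proof, but the ingredients the paper lays out (Lemma \ref{lemma:1} applied with ${\bf b}=\B$, ${\bf c}=0$, $a=d$, the scalar reading of \eqref{eq:n=1pn2x}--\eqref{eq:n=1p=2x} and \eqref{eq:disug}, and the stated equivalence of \eqref{eq:23} with \eqref{eq:24x}) are assembled exactly as you do. Your one ``delicate point'' is in fact standard and not a gap: instead of a global polar decomposition $v=\rho e^{i\theta}$ one sets $\xi=\Real(|v|^{-1}\overline v\,\nabla v)=\nabla|v|$ and $\eta=\Imm(|v|^{-1}\overline v\,\nabla v)$ pointwise on $\{v\neq 0\}$ (the zero set being handled by the convention of Lemma \ref{lemma:1}), after which the symmetry of $\Imm\A$ collapses the integrand to $\frac{4}{p\,p'}\lan\Real\A\,\xi,\xi\ran+\lan\Real\A\,\eta,\eta\ran-2(1-2/p)\lan\Imm\A\,\xi,\eta\ran$, precisely the computation behind Theorem \ref{th:1new}.
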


\section{The $L^p$-dissipativity of the Lam\'e operator}
\label{sec:elast}

Let us consider the classical operator of 
linear elasticity
\begin{equation}
    Eu=\Delta u + (1-2\nu)^{-1}\nabla \dive u
    \label{opelast}
\end{equation}
where  $\nu$ is the Poisson ratio. 
We assume that
either $\nu>1$ or $\nu<1/2$. 
It is well known that $E$ 
 is strongly elliptic if and only if this condition is
 satisfied.
 
 We remark that the elasticity system is not of the form considered
 in the subsection \ref{subsec:system2}.
 
 Let $\elle$ be the bilinear form associated with
operator \eqref{opelast}, i.e.
$$
  \elle(u,v)=- \int_\Om (\lan\nabla u , \nabla v\ran + (1-2\nu)^{-1}\dive u\, 
    \dive v)\, dx\, ,
$$

The following lemma holds in any dimensions:

\begin{lemma}[\cite{cialmaz2}]\label{lemma:5}
    Let $\Om$ be a domain of $\RR^{n}$.
    The operator \eqref{opelast} is $L^{p}$-dissipative if and only if
    \begin{equation*}
    \int_{\Om}[C_{p}|\nabla|v||^{2}-\sum_{j=1}^{2}|\nabla v_{j}|^{2}
    +\gamma\, C_{p}\, |v|^{-2}|v_{h}\de_{h}|v||^{2}-\gamma\, |\dive v|^{2}]\, 
    dx \leq 0
\end{equation*}
for any $v\in (\Cspt^{1}(\Om))^{2}$, where
\begin{equation}
    C_{p}=(1-2/p)^{2}, \qquad \gamma= (1-2\nu)^{-1}.
    \label{cpg}
\end{equation}
\end{lemma}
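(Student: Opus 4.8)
The plan is to convert the defining inequality of $L^p$-dissipativity into the stated quadratic functional by the substitution that already underlies the scalar Lemma~\ref{lemma:1}, now performed on the vector field $u$. For $p\geq 2$, \eqref{eq:defdis1} requires $\Real\elle(u,|u|^{p-2}u)\geq 0$ for all $u\in(\Cspt^1(\Om))^{n}$; using $\elle(u,v)=-\int_\Om\lan Eu,v\ran$ with $E$ as in \eqref{opelast} and integrating by parts, this reads $\Real\int_\Om[\lan\nabla u,\nabla(|u|^{p-2}u)\ran+\gamma\,\dive u\,\overline{\dive(|u|^{p-2}u)}]\geq 0$, with $\gamma$ as in \eqref{cpg}. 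I would then set $u=|v|^{2/p-1}v$, equivalently $v=|u|^{p/2-1}u$. Writing $s=2/p-1$ and $t=1-2/p$, this exponent is the correct one for two reasons: $|u|^{p-2}u=|v|^{t}v$ with $s+t=0$, so no residual power of $|v|$ multiplies the integrand, and $st=-(1-2/p)^2=-C_p$.

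The computation then rests on the two pointwise identities coming from $\de_h(|v|^{a}w)=|v|^{a}\de_h w+a|v|^{a-1}(\de_h|v|)w$ together with $\de_h|v|=|v|^{-1}\Real\lan\de_h v,v\ran$. For the Laplacian part, expanding $\de_h u$ and $\de_h(|u|^{p-2}u)$ and taking real parts, the radial cross terms assemble into a coefficient $s+t+st$ of $(\de_h|v|)^{2}$; since $s+t=0$ and $st=-C_p$, summation over $h$ gives $\Real\lan\nabla u,\nabla(|u|^{p-2}u)\ran=\sum_j|\nabla v_j|^{2}-C_p|\nabla|v||^{2}$. For the divergence part, putting $P=v_h\de_h|v|$ one finds $\dive u=|v|^{s}\dive v+s|v|^{s-1}P$ and $\dive(|u|^{p-2}u)=|v|^{t}\dive v+t|v|^{t-1}P$; in the product the two linear cross terms add up to $(s+t)|v|^{-1}\Real(\dive v\,\overline P)=0$, leaving $\Real(\dive u\,\overline{\dive(|u|^{p-2}u)})=|\dive v|^{2}-C_p|v|^{-2}|v_h\de_h|v||^{2}$. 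Inserting both identities reproduces exactly the integrand in the statement, and \eqref{eq:defdis1} becomes the displayed inequality.

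The range $1<p<2$ is handled in the same way, starting from \eqref{eq:defdis2}, i.e. $\Real\elle(|u|^{p'-2}u,u)\geq 0$, with the analogous substitution built on $p'$. The only point to record is that $C_{p'}=C_p$, because $1-2/p'=-(1-2/p)$ and $C_p=(1-2/p)^{2}$ is invariant under $p\mapsto p'$; hence the same functional, with the same constants $C_p$ and $\gamma$, is obtained, so the two ranges of $p$ lead to one and the same condition.

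The main obstacle is not the algebra but the legitimacy of the change of variables. One must verify that, as $u$ ranges over $(\Cspt^1(\Om))^{n}$, the field $v=|u|^{p/2-1}u$ (and conversely $u=|v|^{2/p-1}v$) describes a class rich enough to test the inequality for every admissible $v$, and that the quantities $|\nabla|v||^{2}$ and $|v|^{-2}|v_h\de_h|v||^{2}$ are integrable and stable under regularization near $\{v=0\}$. I expect this to be dealt with exactly as in Lemma~\ref{lemma:1}: replace $|v|$ by $(|v|^{2}+\varepsilon^{2})^{1/2}$, carry out the identities for this smooth surrogate, and let $\varepsilon\to 0^{+}$, invoking the convention that the integrand is extended by zero where $v$ vanishes to control the contribution of the set $\{v=0\}$.
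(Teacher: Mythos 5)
Your proposal is correct and is essentially the same argument as the proof the paper cites from \cite{cialmaz2}: after integration by parts, the substitution $v=|u|^{(p-2)/2}u$ (i.e.\ $u=|v|^{2/p-1}v$) together with the two pointwise identities for $\Real\lan\nabla u,\nabla(|u|^{p-2}u)\ran$ and $\Real\bigl(\dive u\,\overline{\dive(|u|^{p-2}u)}\bigr)$ turns the dissipativity condition into the stated quadratic functional, the case $1<p<2$ reducing to the case $p'\geq 2$ because $C_{p'}=C_p$. The delicate point you flag, the admissibility of the change of variables near $\{v=0\}$, is handled in the original proof exactly as you indicate, by working with the regularization $(|v|^{2}+\varepsilon^{2})^{1/2}$ and letting $\varepsilon\to 0^{+}$ as in Lemma~\ref{lemma:1}.
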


More precise results are known in the case of 
planar elasticity.
At first we have an algebraic necessary condition:

\begin{lemma}[\cite{cialmaz2}]\label{th:5}
    Let $\Om$ be a domain of $\RR^{2}$.
    If the operator \eqref{opelast}  is $L^{p}$-dissipative, we have 
    \begin{equation*}
	    C_{p}[|\xi|^{2}+\gamma\, 
	    \lan \xi,\om\ran^{2}]\lan\lambda,\om\ran^{2}
	    - |\xi|^{2}|\lambda|^{2} - \gamma\, \lan\xi,\lambda\ran^{2}\leq 0
	\end{equation*} 
	for any $\xi,\, \lambda,\, \om\in\RR^{2}$, $|\om|=1$
	(the constants $C_{p}$ and $\gamma$ being given by \eqref{cpg}).
\end{lemma}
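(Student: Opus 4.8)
The plan is to read off this algebraic necessary condition directly from the integral criterion of Lemma \ref{lemma:5}. Assuming the operator is $L^{p}$-dissipative, that lemma gives
\[
\int_\Om\Big[C_p\,|\nabla|v||^2-|\nabla v|^2+\gamma\, C_p\,|v|^{-2}\lan v,\nabla|v|\ran^2-\gamma\,|\dive v|^2\Big]\,dx\le0
\]
for every $v\in(\Cspt^1(\Om))^2$, where I have written $\sum_j|\nabla v_j|^2=|\nabla v|^2$ and $v_h\de_h|v|=\lan v,\nabla|v|\ran$. The idea is to feed into this inequality a family of rapidly oscillating, spatially concentrated fields and to extract the pointwise inequality from the leading order term. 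A short algebraic computation indicates the right target configuration: if at a point $v$ were parallel to $\om$ while $\nabla v=\lambda\otimes\xi$ (that is $\de_h v_j=\lambda_j\xi_h$), then $\dive v=\lan\lambda,\xi\ran$, $\nabla|v|=\lan\om,\lambda\ran\xi$ and $|v|^{-1}\lan v,\nabla|v|\ran=\lan\om,\lambda\ran\lan\om,\xi\ran$, so the bracketed integrand collapses exactly to the quadratic form $Q(\xi,\lambda,\om)$ in the statement. The task is therefore to manufacture test fields realizing this configuration in a limit.

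The fields I would use are
\[
v(x)=\varphi(x)\,\big[\om+a\,\sin(t\lan\xi,x\ran)\,\lambda\big],
\]
with $\varphi\in\Cspt^\infty(\Om)$ real and not identically zero, $a>0$ a small parameter and $t$ a large frequency. Two features make this ansatz work. First, when $a|\lambda|<1$ one has $|\om+a\sin(t\lan\xi,x\ran)\lambda|\ge 1-a|\lambda|>0$, so $v$ vanishes only where $\varphi$ does; hence $|v|$ is smooth on the support and the singular factor $|v|^{-2}$ causes no trouble. Second, differentiating the oscillation produces the dominant, order-$t$, rank-one gradient $\nabla v\simeq\varphi a\,t\cos(t\lan\xi,x\ran)\,\lambda\otimes\xi$, whose value-space direction is $\lambda$, while $v$ itself stays within $O(a)$ of the direction $\om$. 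Writing $s=\varphi a\,t\cos(t\lan\xi,x\ran)$, I would verify that to leading order $\nabla v\simeq s\,\lambda\otimes\xi$, $\dive v\simeq s\lan\lambda,\xi\ran$, $\nabla|v|\simeq s\lan\om,\lambda\ran\xi$ and $|v|^{-1}\lan v,\nabla|v|\ran\simeq s\lan\om,\lambda\ran\lan\om,\xi\ran$, so that the integrand equals $s^2$ times $Q(\xi,\lambda,\om)$, up to terms that are $O(t)$ (cross terms with $\nabla\varphi$) or one power of $a$ smaller.

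With these estimates the conclusion follows from an iterated limit. Dividing by $t^2$ and letting $t\to\infty$, the oscillatory factor averages out, $\int_\Om\varphi^2\cos^2(t\lan\xi,x\ran)\,dx\to\tfrac12\int_\Om\varphi^2\,dx$ by Riemann--Lebesgue (the case $\xi=0$ being trivial), while the $O(t)$ remainders disappear; this yields
\[
a^2\,\tfrac12\Big(\int_\Om\varphi^2\,dx\Big)\,\big[\,Q(\xi,\lambda,\om)+O(a)\,\big]\le0.
\]
Dividing by $a^2$ and letting $a\to0$ removes the $O(a)$ correction and, since $\int_\Om\varphi^2\,dx>0$, leaves $Q(\xi,\lambda,\om)\le0$, which is the assertion.

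The delicate point is precisely the design of the test field, and I expect this to be the main obstacle. A purely affine field with $\nabla v=\lambda\otimes\xi$ cut off to compact support, or a single plane wave with fixed polarization, both fail: in the first the transition region contributes a large wrong-signed term that cannot be beaten by the bulk under any choice of scales, and in the second either $\nabla|v|$ is forced to lower order (killing the $C_p$ terms) or the value-space direction of $\nabla v$ is forced parallel to $\om$, giving only a degenerate, trivially satisfied inequality. Coupling a small transverse amplitude $a$ to a high frequency $t$ is what simultaneously keeps $|v|$ bounded away from zero, makes $\nabla|v|$ and $\nabla v$ both of order $t$ with the correct rank-one structure, and holds the direction of $v$ near $\om$; the bulk of the technical work is the bookkeeping showing that all error terms are genuinely negligible in the double limit.
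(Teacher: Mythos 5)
Your argument is correct and is essentially the same as the paper's own proof (given in \cite{cialmaz2}): there too, the pointwise inequality is extracted from the integral criterion of Lemma \ref{lemma:5} by inserting a test field consisting of an $\omega$-directed bulk $\varphi\,\omega$ perturbed by a small-amplitude, high-frequency oscillation polarized along $\lambda$ with phase $\langle\xi,x\rangle$, so that the rank-one gradient $\lambda\otimes\xi$ dominates and weak limits of the oscillating factors produce exactly the quadratic form $Q(\xi,\lambda,\omega)$. Your use of two independent parameters $a$ and $t$ with the iterated limit ($t\to\infty$, then $a\to 0$) is just a slightly different bookkeeping of that same oscillating-test-function method.
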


Hinging on Lemmas \ref{lemma:5} and \ref{th:5}, we proved
\begin{theorem}[\cite{cialmaz2}]
Let $\Om$ be a domain of $\RR^{2}$.
   The operator \eqref{opelast} is $L^{p}$-dissipative if
   and only if 
  \begin{equation}
      \left(\frac{1}{2}-\frac{1}{p}\right)^{2} \leq 
      \frac{2(\nu-1)(2\nu-1)}{
	    (3-4\nu)^{2}}\, . 
       \label{condvecchia}
   \end{equation}
\end{theorem}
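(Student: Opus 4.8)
The plan is to prove the two implications separately, using the two lemmas already available: the algebraic necessary condition of Lemma \ref{th:5} for the ``only if'' part, and the integral characterization of Lemma \ref{lemma:5} for the ``if'' part. It is convenient to first rewrite \eqref{condvecchia} in terms of the constants $C_p$ and $\gamma$ of \eqref{cpg}. Since $1-2/p=2(1/2-1/p)$ we have $C_p=4(1/2-1/p)^2$, and eliminating $\nu$ through $\gamma=(1-2\nu)^{-1}$ turns \eqref{condvecchia} into the equivalent inequality $C_p(\gamma+2)^2\le 4(\gamma+1)$, that is $C_p\le 4(\gamma+1)/(\gamma+2)^2$. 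Under the standing assumption $\nu>1$ or $\nu<1/2$ one has $\gamma+1>0$, so the right-hand side is a genuine positive threshold.

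\textbf{Necessity.} Assuming $L^p$-dissipativity, Lemma \ref{th:5} must hold for all $\xi,\lambda,\om\in\RR^2$ with $|\om|=1$. By homogeneity I normalize $|\xi|=|\lambda|=1$ and, after a rotation, take $\om=(1,0)$; writing $\xi=(\cos a,\sin a)$ and $\lambda=(\cos b,\sin b)$, the inequality of Lemma \ref{th:5} becomes $C_p(1+\gamma\cos^2 a)\cos^2 b-1-\gamma\cos^2(a-b)\le0$. I would maximize the left-hand side first in $b$: its $b$-dependence has the form $P\cos2b+Q\sin2b$ plus a constant, so the maximum is the explicit square root $\sqrt{P^2+Q^2}$ added to that constant. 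Maximizing the result in $a$, a short computation shows that the extremum occurs when $\xi$ bisects the angle, $\cos^2 a=\tfrac12$, and that the requirement ``$\le0$'' there reads exactly $C_p(\gamma+2)^2\le 4(\gamma+1)$, i.e. \eqref{condvecchia}.

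\textbf{Sufficiency.} Here I start from the integral criterion of Lemma \ref{lemma:5} and must show that \eqref{condvecchia} forces that functional to be $\le0$ for every $v\in(\Cspt^1(\Om))^2$. Writing $w=|v|$ and splitting into real and imaginary parts, the two quadratic terms $\sum_j|\nabla v_j|^2$ and $|\dive v|^2$ can be recombined through the planar identity $\int_\Om\sum_j|\nabla v_j|^2=\int_\Om\big(|\dive v|^2+|\de_1 v_2-\de_2 v_1|^2\big)$, valid for compactly supported fields because the Jacobian cross term is a null Lagrangian. This recasts Lemma \ref{lemma:5} as a competition between $C_p|\nabla w|^2$ together with the weighted term $\gamma C_p\,w^{-2}|v_h\de_h w|^2$ on one side, and $(1+\gamma)|\dive v|^2+|\de_1 v_2-\de_2 v_1|^2$ on the other. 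The decisive structural inputs are then the Kato-type bound $|\nabla w|^2\le\sum_j|\nabla v_j|^2$ and the observation that $w^{-1}v_h\de_h w$ is the component of $\nabla w$ along $v/|v|$; these let me control the left-hand side by a quadratic form in $\nabla v$ whose non-negativity, once the extremal directions are inserted, collapses to the same scalar inequality \eqref{condvecchia}.

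\textbf{Main obstacle.} The genuine difficulty lies entirely in the sufficiency. The integrand of Lemma \ref{lemma:5} is indefinite pointwise; indeed, one checks that a naive pointwise inequality fails (for instance on fields with large symmetric gradient and vanishing curl), so the estimate cannot be obtained by merely freezing coefficients. The correct threshold emerges only from the interplay of the global integration-by-parts identity with the pointwise Kato inequality, and the extremal configurations turn out to be precisely the rank-one fields $\nabla v=\lambda\otimes\xi$ that already saturate Lemma \ref{th:5}. Hence the core of the argument is to prove that these are the worst case --- that no complex-valued field, with $v/|v|$ allowed to be complex, does better --- so that the sufficiency threshold coincides with the necessity threshold. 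Matching these two exactly at \eqref{condvecchia} is where the delicate estimates are concentrated.
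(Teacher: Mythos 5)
Your necessity half is correct. The reformulation is right: with $\gamma=(1-2\nu)^{-1}$ one has $2(\nu-1)(2\nu-1)=(\gamma+1)/\gamma^{2}$ and $(3-4\nu)^{2}=(\gamma+2)^{2}/\gamma^{2}$, so \eqref{condvecchia} reads $C_{p}(\gamma+2)^{2}\le 4(\gamma+1)$, and strong ellipticity indeed gives $\gamma+1>0$. Normalizing as you do in Lemma \ref{th:5} ($|\xi|=|\lambda|=1$, $\om=(1,0)$), maximizing in $b$ and then in $a$ reduces the requirement to $\frac{C_{p}}{4}\bigl(4+4\gamma+\gamma^{2}(1-\cos^{2}2a)\bigr)\le 1+\gamma$ for all $a$, whose worst case $\cos 2a=0$ (your $\cos^{2}a=1/2$) is exactly $C_{p}(\gamma+2)^{2}\le 4(\gamma+1)$. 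This is precisely the role Lemma \ref{th:5} is meant to play: the present paper gives no proof of the theorem, saying only that it ``hinges on Lemmas \ref{lemma:5} and \ref{th:5}'' and citing \cite{cialmaz2}, and your necessity argument is a faithful and complete reconstruction of that half.

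The sufficiency half, however, has a genuine gap --- one you in effect acknowledge yourself. After the planar identity $\int_{\Om}\sum_{j}|\nabla v_{j}|^{2}\,dx=\int_{\Om}(|\dive v|^{2}+|\de_{1}v_{2}-\de_{2}v_{1}|^{2})\,dx$, what must be proved is that \eqref{condvecchia} implies $\int_{\Om}\bigl[C_{p}|\nabla|v||^{2}+\gamma C_{p}|v|^{-2}|v_{h}\de_{h}|v||^{2}\bigr]dx\le\int_{\Om}\bigl[(1+\gamma)|\dive v|^{2}+|\de_{1}v_{2}-\de_{2}v_{1}|^{2}\bigr]dx$ for every complex-valued $v\in(\Cspt^{1}(\Om))^{2}$. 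As you correctly observe, no pointwise version of this can hold (a shear gradient, symmetric and traceless, has $\dive v=\de_{1}v_{2}-\de_{2}v_{1}=0$ while $\nabla|v|\neq 0$), so Kato's inequality plus ``inserting extremal directions into a quadratic form'' cannot close the estimate. The step where you claim that the rank-one fields saturating Lemma \ref{th:5} are the global worst case --- that no complex field does better, so the sufficiency threshold equals the necessity threshold --- is asserted, not proved; but that claim \emph{is} the hard direction of the theorem. Establishing that the sharp constant of this global integral inequality coincides with the algebraic threshold is exactly the delicate work carried out in \cite{cialmaz2}, and nothing in your text supplies a substitute for it. So the proposal proves necessity, but only restates --- rather than establishes --- sufficiency.
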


Concerning the elasticity system in any dimension,
the next Theorem shows that condition
\eqref{condvecchia} is necessary, even in the case
of a non constant Poisson ratio.
Here $\Om$ is a bounded domain in $\RR^n$ whose boundary is in the
class $C^2$.

\begin{theorem}[\cite{cialmaz3}]
    Suppose $\nu=\nu(x)$ is a continuos function defined in
    ${\Om}$ such that 
    $$
    \inf_{x\in\Om}|2\nu(x)-1|>0.
    $$ 
    If the operator \eqref{opelast} is $L^{p}$-dissipative in $\Om$, then
     \begin{equation*}
      \left(\frac{1}{2}-\frac{1}{p}\right)^{2} \leq 
      \inf_{x\in\Om} \frac{2(\nu(x)-1)(2\nu(x)-1)}{(3-4\nu(x))^{2}}\, . 
   \end{equation*}
\end{theorem}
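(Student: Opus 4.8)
The plan is to prove the statement in pointwise form. Since the right-hand side is an infimum over $x\in\Om$, it suffices to establish, for each fixed $x_{0}\in\Om$, the inequality $\left(\tfrac12-\tfrac1p\right)^{2}\le 2(\nu(x_{0})-1)(2\nu(x_{0})-1)/(3-4\nu(x_{0}))^{2}$, and then take the infimum over $x_{0}$. I would argue by contradiction: suppose this fails at some $x_{0}$, i.e. with $\gamma_{0}=(1-2\nu(x_{0}))^{-1}$ and $C_{p}=(1-2/p)^{2}$ the constant-coefficient criterion \eqref{condvecchia} is violated at the frozen value $\nu(x_{0})$. The planar analysis leading to \eqref{condvecchia} shows that such a violation produces vectors $\xi,\lambda,\om\in\RR^{2}$, $|\om|=1$, for which the quadratic form of Lemma \ref{th:5} is strictly positive, namely
$$
C_{p}[|\xi|^{2}+\gamma_{0}\lan\xi,\om\ran^{2}]\lan\lambda,\om\ran^{2}-|\xi|^{2}|\lambda|^{2}-\gamma_{0}\lan\xi,\lambda\ran^{2}>0.
$$
The goal is to turn this strict inequality at the single point $x_{0}$ into a violation of the global $L^{p}$-dissipativity of \eqref{opelast}, reaching a contradiction.

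The construction is a concentration-and-oscillation argument that freezes the coefficient at $x_{0}$. After an orthogonal change of variables I place the relevant $2$-plane along the first two axes and use test fields $v$ whose only non-zero components are $v_{1},v_{2}$. I would take $v(x)=\eta_{\varepsilon}(x)\,w_{t}(x)$, where $\eta_{\varepsilon}\in\Cspt^{1}(\Om)$ is supported in the ball $B(x_{0},\varepsilon)$ and $w_{t}$ oscillates with large frequency $t$ in the direction $\om$, carrying the polarizations built from $\xi$ and $\lambda$; this is exactly the family used to prove the necessity in Lemma \ref{th:5}. Since the high frequency acts only within the $2$-plane, the derivatives of $w_{t}$ transverse to it, as well as the derivatives falling on $\eta_{\varepsilon}$, are of lower order in $t$, so the dominant $t^{2}$ part of the operator \eqref{opelast} applied to $v$ coincides with the planar Lam\'e expression. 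Substituting $v$ into the dissipativity inequality — in the integral form obtained by the computation of Lemma \ref{lemma:5} carried out with $\gamma=\gamma(x)$ — and extracting the leading order in $t$, the principal part of the integrand equals $t^{2}$ times the planar quadratic expression displayed above, but with $\gamma_{0}$ replaced by the variable $\gamma(x)$.

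The decisive step is the freezing of coefficients. Because $\eta_{\varepsilon}$ is supported in $B(x_{0},\varepsilon)$ and $\nu$ is continuous, one has $\gamma(x)=\gamma_{0}+o(1)$ uniformly on the support as $\varepsilon\to0$; moreover the terms generated by $\nabla\gamma$, together with all other lower-order contributions, are $O(t)$ and hence negligible after dividing by $t^{2}$. Letting $t\to\infty$ and then $\varepsilon\to0$, the normalized dissipativity inequality reduces exactly to the frozen planar quadratic expression with $\gamma=\gamma_{0}$, which by our choice of $\xi,\lambda,\om$ has the \emph{wrong} sign. This contradicts the assumed $L^{p}$-dissipativity of \eqref{opelast}. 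Consequently the pointwise inequality holds at every $x_{0}\in\Om$, and taking the infimum yields the claim.

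I expect the main obstacle to be the freezing/limit analysis just described: one must verify that the oscillatory leading term is uniformly controlled and reproduces precisely the quadratic form of Lemma \ref{th:5}, that the non-smoothness of $|v|$ on the set where $v$ vanishes does not corrupt the computation, and that the coefficient-variation and lower-order terms genuinely drop out in the double limit $t\to\infty$, $\varepsilon\to0$. A secondary technical point is to check that restricting to two components and confining the oscillation to a $2$-plane makes the dominant part of the $n$-dimensional form \eqref{opelast} coincide with the planar one, so that the extremal planar configuration $(\xi,\lambda,\om)$ is genuinely admissible as a test configuration in $\RR^{n}$.
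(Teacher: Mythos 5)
Your proposal is correct and follows essentially the same route as the original proof in \cite{cialmaz3} (the present paper only quotes the theorem): localize at a point $x_{0}$, use continuity of $\nu$ to freeze $\gamma(x)$ to $\gamma_{0}$, test with the concentrated oscillatory two-component fields underlying Lemma \ref{th:5} so that the leading order in the frequency reproduces the planar quadratic form, and then use the planar equivalence of that form condition with \eqref{condvecchia} to conclude, finally taking the infimum over $x_{0}$. The points you flag (control of lower-order and coefficient-variation terms in the double limit, and admissibility of the planar configuration inside $\RR^{n}$) are exactly the technical steps the original argument carries out, so no essential idea is missing.
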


We do not know if condition \eqref{condvecchia} 
is sufficient for the $L^p$-dissipativity of the $n$-dimensional elasticity.
The next Theorem provides a more strict sufficient condition.
 
\begin{theorem}[\cite{cialmaz3}]
   Let $\Om$ be a domain in $\RR^n$.  If
    \begin{equation*}
        (1-2/p)^{2}\leq
\begin{cases}
\displaystyle\frac{1-2\nu}{2(1-\nu)} & \text{if}\ \nu<1/2\\
\\
\displaystyle\frac{2(1-\nu)}{1-2\nu}& \text{if}\ \nu > 1.
\end{cases}
    \end{equation*}
    the operator \eqref{opelast} is $L^p$-dissipative.
\end{theorem}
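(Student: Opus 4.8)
The plan is to reduce, by Lemma~\ref{lemma:5}, the assertion to the inequality
\[
I[v]:=\int_\Om\Big[C_p|\nabla|v||^2-\sum_j|\nabla v_j|^2+\gamma\,C_p\,|v|^{-2}|v_h\de_h|v||^2-\gamma\,|\dive v|^2\Big]dx\le 0
\]
for all $v\in(\Cspt^1(\Om))^n$, under the stated bounds on $C_p$. Fix a point with $v\neq0$, put $r=|v|$, $\hat v=v/r$, and let $M=(\de_k v_j)$ be the gradient matrix. With $w_k=\sum_j\overline{\hat v_j}\,\de_k v_j$ one has $\de_k r=\Real w_k$, so the first and third integrands equal $A:=|\nabla|v||^2=|\Real w|^2$ and $B:=|v|^{-2}|v_h\de_h|v||^2=\big|\sum_h\hat v_h\,\Real w_h\big|^2$; Cauchy--Schwarz and $|\hat v|=1$ then give $B\le A$. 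Splitting $M=w\,\hat v^{\,t}+M'$, where each row of $M'$ is Hermitian-orthogonal to $\hat v$, yields the orthogonal decomposition $D:=\sum_j|\nabla v_j|^2=|w|^2+\|M'\|^2$, whence $A\le|w|^2\le D$, while $F:=|\dive v|^2=|\mathrm{tr}\,M|^2$.

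When $\nu<1/2$, i.e. $\gamma=(1-2\nu)^{-1}>0$, no integration by parts is needed. Since $\gamma F\ge0$ and $\|M'\|^2\ge0$, we get $D+\gamma F\ge|w|^2\ge A$ pointwise; on the other hand $B\le A$ and the hypothesis $C_p\le\frac{1-2\nu}{2(1-\nu)}=\frac1{1+\gamma}$ give $C_p(A+\gamma B)\le C_p(1+\gamma)A\le A$. Subtracting shows that the integrand of $-I[v]$ is pointwise nonnegative, so $I[v]\le0$.

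When $\nu>1$, i.e. $-1<\gamma<0$, the term $\gamma F\le0$ destroys pointwise nonnegativity: taking $w=0$ and $M'$ proportional to the projection of the identity onto the row-orthogonality subspace makes the pointwise form negative as soon as $1+\gamma(n-1)<0$. This is the main obstacle, and it forces a global argument. The remedy is the identity $\int_\Om|\dive v|^2\,dx=\int_\Om\sum_{j,k}\de_k v_j\,\overline{\de_j v_k}\,dx$, obtained by integrating by parts twice, which allows one to replace $F$ under the integral sign by $\Real G$ with $G=\sum_{j,k}\de_k v_j\,\overline{\de_j v_k}$. Because $\Real G\le|G|\le\sum_{j,k}|\de_k v_j|^2=D$ and $\gamma<0$, we have $\gamma\,\Real G\ge\gamma D$; together with $0\le A+\gamma B\le D$ (from $B\le A\le D$ and $-1<\gamma<0$) this yields the pointwise estimate
\[
D+\gamma\,\Real G-C_p(A+\gamma B)\ge(1+\gamma)D-C_pD=(1+\gamma-C_p)D\ge0,
\]
the last inequality being exactly the hypothesis $C_p\le1+\gamma=\frac{2(1-\nu)}{1-2\nu}$. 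Integrating recovers $I[v]\le0$.

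Thus both regimes collapse to the elementary bounds $B\le A\le D$ and $\Real G\le D$, and the two thresholds emerge as $\frac1{1+\gamma}$ and $1+\gamma$ respectively; the only genuine difficulty is recognizing that for $\nu>1$ the divergence term must first be rewritten by integration by parts before any pointwise comparison can succeed.
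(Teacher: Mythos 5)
Your proof is correct and follows essentially the same route as the proof in the cited source \cite{cialmaz3} (the survey itself only states the theorem without reproducing its proof): reduction via Lemma~\ref{lemma:5}, the elementary pointwise bounds $B\le A\le |w|^2\le D$, and, in the regime $\nu>1$ where pointwise positivity genuinely fails, the integral identity $\int_\Om|\dive v|^2\,dx=\int_\Om\sum_{j,k}\de_k v_j\,\overline{\de_j v_k}\,dx$ to dominate the divergence term by the full gradient, which produces exactly the two thresholds $1/(1+\gamma)$ and $1+\gamma$. The only point to polish is that $v$ is merely $C^1$, so the double integration by parts should be justified by mollification (or by Plancherel, both sides being quadratic forms in $\nabla v$ continuous under $L^2$ convergence of gradients); this is routine.
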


We have also a kind of weighted $L^p$-negativity of elasticity system defined on rotationally symmetric vector functions.

Let $\Phi$ be a point on the $(n-2)-$dimensional 
unit sphere $S^{n-2}$ with spherical coordinates 
$\{\vartheta_{j}\}_{j=1,\ldots,n-3}$ and $\varphi$, where $\vartheta_{j}\in 
(0, \pi)$ and $\varphi \in [0, 2\pi)$. A point $x \in\RR^{n}$ is 
represented as a triple $(\varrho,\vartheta, \Phi)$, where $\varrho > 0$ and
$\vartheta\in [0,\pi]$. 
Correspondingly, a vector $u$ can be written as $u =
(u_{\varrho},u_{\vartheta},u_{\Phi})$ with 
$u_{\Phi} = (u_{\vartheta_{n-3}},\ldots,u_{\vartheta_{1}},u_{\varphi})$.
We call $u_{\varrho},u_{\vartheta},u_{\Phi}$ the spherical components
of the vector $u$.

\begin{theorem}[\cite{cialmaz3}]
    Let the spherical components 
    $u_{\vartheta}$ and $u_{\Phi}$ of the vector $u$  vanish, i.e. 
    $u=(u_{\varrho},0,0)$, and let $u_{\varrho}$ depend only
    on the variable $\varrho$. Then, if $\alpha \geq n-2$, we have
    \begin{equation*}
       \int_{\RR^{n}}\left(\Delta u + (1-2\nu)^{-1}  \nabla \dive 
	u\right) |u|^{p-2}u\, \frac{dx}{|x|^{\alpha}}\leq 0
    \end{equation*}
    for any  $u\in 
    (\Cspt^{\infty}(\RR^{n}\setminus \{0\}))^n$ satisfying the aforesaid
symmetric conditions,
     if and only if
    \begin{equation*}
        -(p-1)(n+p'-2) \leq \alpha \leq n+p-2.
    \end{equation*}
    If $\alpha<n-2$ the same result holds replacing 
    $(\Cspt^{\infty}(\RR^{n}\setminus \{0\}))^n$
    by $(\Cspt^{\infty}(\RR^{n}))^n$.
    \end{theorem}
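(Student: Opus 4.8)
The plan is to reduce the $n$-dimensional inequality to a one-dimensional weighted one. Writing $\varrho=|x|$ and $\omega=x/\varrho$, the admissible fields are $u=f(\varrho)\,\omega$ with $f$ a radial profile (in $\Cspt^\infty(0,\infty)$ when $\alpha\ge n-2$). Such a $u$ is a gradient, $u=\nabla\Phi$ with $\Phi'=f$, hence curl-free, so $\Delta u=\nabla\dive u$ and therefore
$$
Eu=\Delta u+(1-2\nu)^{-1}\nabla\dive u=\frac{2(1-\nu)}{1-2\nu}\,\Delta u .
$$
The scalar factor is strictly positive under the standing ellipticity assumption $\nu>1$ or $\nu<1/2$, so it does not affect the sign. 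A direct computation gives $\Delta u=\psi(\varrho)\,\omega$ with $\psi=f''+\tfrac{n-1}{\varrho}f'-\tfrac{n-1}{\varrho^2}f$, while $|u|=|f|$ and $|u|^{p-2}u=|f|^{p-2}f\,\omega$. Passing to polar coordinates and using $|\omega|=1$, the integral in the statement becomes a positive multiple of
$$
J=\int_0^\infty\Big(f''+\frac{n-1}{\varrho}f'-\frac{n-1}{\varrho^2}f\Big)\,|f|^{p-2}f\,\varrho^{\,n-1-\alpha}\,d\varrho ,
$$
and it remains to characterize when $J\le 0$ for all admissible $f$.

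Next I would integrate by parts. Set $\beta=n-1-\alpha$. Using $\frac{d}{d\varrho}(|f|^{p-2}f)=(p-1)|f|^{p-2}f'$ and $\frac{d}{d\varrho}|f|^{p}=p\,|f|^{p-2}f f'$, two integrations by parts (all boundary terms vanishing for the moment) turn $J$ into
$$
J=-(p-1)\,A-\Big(\frac{\alpha(n-2-\alpha)}{p}+(n-1)\Big)\,J_3,
$$
where $A=\int_0^\infty|f|^{p-2}(f')^2\varrho^{\beta}\,d\varrho\ge 0$ and $J_3=\int_0^\infty|f|^{p}\varrho^{\beta-2}\,d\varrho\ge 0$. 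Writing $C=\frac{\alpha(n-2-\alpha)}{p}+(n-1)$, the sign of $J$ is thus governed by the interplay of the two non-negative quantities $A$ and $J_3$.

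The link between $A$ and $J_3$ is a sharp weighted Hardy inequality. The substitution $g=|f|^{p/2-1}f$ gives $|f|^{p-2}(f')^2=\frac{4}{p^2}(g')^2$ and $|f|^{p}=g^2$, so $A=\frac{4}{p^2}\int_0^\infty(g')^2\varrho^{\beta}\,d\varrho$ and $J_3=\int_0^\infty g^2\varrho^{\beta-2}\,d\varrho$. The one-dimensional Hardy inequality $\int_0^\infty(g')^2\varrho^{\beta}\,d\varrho\ge\frac{(\beta-1)^2}{4}\int_0^\infty g^2\varrho^{\beta-2}\,d\varrho$, with sharp constant, yields $A\ge\frac{(n-2-\alpha)^2}{p^2}\,J_3$. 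Hence $J\le 0$ for every $f$ exactly when $C\ge-(p-1)\frac{(n-2-\alpha)^2}{p^2}$: sufficiency is immediate, and necessity follows by inserting the Hardy near-extremizers, which drive $A/J_3$ to the sharp constant. Multiplying by $p^2$ and setting $s=n-2-\alpha$, this condition becomes $s^2-p(n-2)s-p^2(n-1)\le 0$; the discriminant collapses to $p^2\big((n-2)^2+4(n-1)\big)=p^2n^2$, so the roots are $s=-p$ and $s=p(n-1)$. Translating back through $\alpha=n-2-s$ gives precisely $-(p-1)(n+p'-2)\le\alpha\le n+p-2$.

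Two points deserve care. First, the necessity half relies on the sharpness of the Hardy constant within the prescribed class; here I would exhibit explicit near-extremizers of the form $\varrho^{(\alpha-n+2)/p}$, suitably truncated and concentrated, and verify that they lie in $\Cspt^\infty(\RR^n\setminus\{0\})$ (respectively $\Cspt^\infty(\RR^n)$). Second, the dichotomy between the two function classes is dictated by the convergence of $J_3$ and the vanishing of the boundary terms near $\varrho=0$: both require $\beta-1=n-2-\alpha$ to have the right sign, so that for $\alpha<n-2$ one may admit fields that do not vanish at the origin, whereas for $\alpha\ge n-2$ one must restrict to $\Cspt^\infty(\RR^n\setminus\{0\})$. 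I expect this boundary-term bookkeeping, together with the verification of Hardy-sharpness in the correct class, to be the main obstacle; the algebraic reduction itself is routine once the curl-free identity $\Delta u=\nabla\dive u$ is exploited.
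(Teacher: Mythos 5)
The paper itself states this theorem without proof, citing \cite{cialmaz3}, so there is no in-paper argument to compare against; your proposal follows what is essentially the route of that reference: reduce the rotationally symmetric case to a one-dimensional weighted inequality and invoke the sharp Hardy inequality. Your computations check out: for $u=f(\varrho)\,\omega$ the field is curl-free, so $\Delta u=\nabla\dive u$ and $Eu=\frac{2(1-\nu)}{1-2\nu}\Delta u$ with a strictly positive factor under the standing assumption $\nu>1$ or $\nu<1/2$; the two integrations by parts give $J=-(p-1)A-\bigl(\alpha(n-2-\alpha)/p+(n-1)\bigr)J_3$; the substitution $g=|f|^{p/2-1}f$ turns the sharp constant $(\beta-1)^2/4$ into $(n-2-\alpha)^2/p^2$; and the quadratic $s^2-p(n-2)s-p^2(n-1)\le 0$ with $s=n-2-\alpha$ has discriminant $p^2n^2$, hence roots $s=-p$ and $s=p(n-1)$, which translate exactly into $-(p-1)(n+p'-2)\le\alpha\le n+p-2$ since $n-2-p(n-1)=-(p-1)(n-2)-p=-(p-1)(n+p'-2)$. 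The steps you leave as sketches — the log-concentrated truncations of $\varrho^{(\alpha-n+2)/p}$ for the necessity (these are the correct near-extremizers, since both Hardy integrals then grow like $\log(1/\varepsilon)$ while the cutoff errors stay bounded), the regularization needed to differentiate $|f|^{p-2}f$ when $1<p<2$, and the boundary-term and convergence bookkeeping at $\varrho=0$ that dictates the two admissible classes — are the standard completions and present no obstacle.
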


    \section{The $L^p$-dissipativity of the ``complex oblique
    derivative'' operator}\label{sec:oblique}

    In this section we  consider the $L^{p}$-dissipativity
    of the ``complex oblique
    derivative'' operator, i.e. of the boundary
    operator
\begin{equation}\label{eq:odp}
\lambda\cdot\nabla u =
\frac{\partial u}{\partial x_{n}} +
\sum_{j=1}^{n-1}a_{j}\frac{\partial u}{\partial x_{j}}\, ,
\end{equation}
the coefficients $a_{j}$ being  $L^{\infty}$ complex valued functions
defined on $\RR^{n-1}$.

We start with a Lemma, in which we use the concept of multiplier
(see \cite{mazmult}). In particular we  consider the space - we denote by
${\mathscr M}$ - of the multipliers acting form  $H^{1/2}(\RR^{n-1})$
into itself. Necessary and sufficient conditions  for a function to be a multiplier and equivalent expressions for the relevant norms are given in \cite{mazmult} (see, in particular, Theorem 
4.1.1, p.134).

\begin{lemma}\label{lem:mult}
Let $a=(a_1,\ldots,a_{n-1})$ be a vector multiplier 
belonging to ${\mathscr M}$. We have
\begin{equation*}
\left| \int_{\RR^{n-1}} a_j\, f\, \partial_j g\, dx' \right| \leq
\Vert a\Vert_{\mathscr M} \Vert \nabla f\Vert_{L^2(\RR^n_+)}
\Vert \nabla g\Vert_{L^2(\RR^n_+)}
\end{equation*}
for any $f, g \in  H^1(\RR^n_+)$, $j=1,\ldots,n-1$. Here the derivatives
are understood in the sense of distributions.
\end{lemma}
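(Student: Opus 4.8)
The plan is to reduce the boundary integral to a duality pairing between homogeneous Sobolev spaces on $\RR^{n-1}$ and then combine the multiplier bound with the sharp trace inequality for the half-space. First I would note that the integral depends on $f$ and $g$ only through their traces on $\partial\RR^n_+=\RR^{n-1}$; writing $\gamma f,\gamma g\in H^{1/2}(\RR^{n-1})$ for these traces, I would interpret
$$
\int_{\RR^{n-1}} a_j\,\gamma f\,\partial_j\gamma g\,dx'
$$
as the duality pairing of $a_j\,\gamma f\in\dot H^{1/2}$ with the tangential derivative $\partial_j\gamma g\in\dot H^{-1/2}$. For $f,g\in \Cspt^\infty$ this is a genuine integral, and the general case follows by density, so Cauchy--Schwarz for the $\dot H^{1/2}$--$\dot H^{-1/2}$ pairing gives
$$
\Big| \int_{\RR^{n-1}} a_j\,\gamma f\,\partial_j\gamma g\,dx'\Big|
\leq \Vert a_j\,\gamma f\Vert_{\dot H^{1/2}}\,\Vert \partial_j\gamma g\Vert_{\dot H^{-1/2}}.
$$

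Next I would dispose of the two factors. On the Fourier side $\Vert \partial_j\gamma g\Vert_{\dot H^{-1/2}}^2=\int|\xi|^{-1}|\xi_j|^2|\widehat{\gamma g}|^2\,d\xi\le\int|\xi|\,|\widehat{\gamma g}|^2\,d\xi=\Vert\gamma g\Vert_{\dot H^{1/2}}^2$, because $|\xi_j|\le|\xi|$. For the first factor I would use that $a=(a_1,\dots,a_{n-1})\in{\mathscr M}$ acts boundedly on the homogeneous space, yielding $\Vert a_j\,\gamma f\Vert_{\dot H^{1/2}}\le\Vert a\Vert_{\mathscr M}\,\Vert\gamma f\Vert_{\dot H^{1/2}}$.

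The estimate is then closed by the sharp trace inequality $\Vert\gamma f\Vert_{\dot H^{1/2}(\RR^{n-1})}\le\Vert\nabla f\Vert_{L^2(\RR^n_+)}$, which I would obtain by comparing $f$ with the harmonic extension $u$ of $\gamma f$: since $u$ minimizes the Dirichlet energy among all extensions of $\gamma f$, one has $\Vert\nabla u\Vert_{L^2(\RR^n_+)}\le\Vert\nabla f\Vert_{L^2(\RR^n_+)}$, while a direct Fourier computation gives $\Vert\nabla u\Vert_{L^2(\RR^n_+)}^2=\int_{\RR^{n-1}}|\xi|\,|\widehat{\gamma f}|^2\,d\xi=\Vert\gamma f\Vert_{\dot H^{1/2}}^2$ (with the seminorm normalized as $\Vert\phi\Vert_{\dot H^{1/2}}^2=\int|\xi|\,|\hat\phi|^2\,d\xi$). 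Applying the same bound to $g$ and chaining the four inequalities produces $\Vert a\Vert_{\mathscr M}\Vert\nabla f\Vert_{L^2(\RR^n_+)}\Vert\nabla g\Vert_{L^2(\RR^n_+)}$, as claimed.

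The main obstacle is the step $\Vert a_j\,v\Vert_{\dot H^{1/2}}\le\Vert a\Vert_{\mathscr M}\Vert v\Vert_{\dot H^{1/2}}$: the multiplier norm $\Vert a\Vert_{\mathscr M}$ is defined through the \emph{inhomogeneous} space $H^{1/2}(\RR^{n-1})$, whereas the argument above needs the action of $a_j$ on the \emph{homogeneous} space $\dot H^{1/2}$, with no room to spare since the right-hand side of the lemma carries only gradient norms (one checks by scaling that $\Vert\gamma f\Vert_{L^2}$ cannot be controlled by $\Vert\nabla f\Vert_{L^2(\RR^n_+)}$, so the inhomogeneous trace bound is useless here). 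I would bridge this gap using the equivalent, capacity-type descriptions of the multiplier norm from \cite[Thm.~4.1.1]{mazmult}, which are naturally homogeneous, together with the scale invariance of the $\dot H^{1/2}$ seminorm; this is the only genuinely delicate point, the remaining steps being routine.
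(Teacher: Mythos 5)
Your argument is essentially the paper's own proof: the paper writes the integral as $\int_{\RR^{n-1}} \Lambda^{1/2}(a_j\, f)\,\Lambda^{-1/2}(\partial_j g)\,dx'$ with $\Lambda=\sqrt{-\Delta}$, applies Cauchy--Schwarz, bounds $\Vert\Lambda^{-1/2}(\partial_j g)\Vert_{L^2(\RR^{n-1})}\le\Vert\Lambda^{1/2}g\Vert_{L^2(\RR^{n-1})}$, uses the multiplier bound on $\Vert\Lambda^{1/2}(a_j f)\Vert_{L^2(\RR^{n-1})}$, and finishes with the trace inequality $\Vert\Lambda^{1/2}f\Vert_{L^2(\RR^{n-1})}\le\Vert\nabla f\Vert_{L^2(\RR^n_+)}$ --- exactly your $\dot H^{1/2}$--$\dot H^{-1/2}$ duality argument in different notation. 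The only divergence is that the paper does not even comment on the inhomogeneous-versus-homogeneous multiplier-norm point you carefully flag: it silently applies $\Vert a\Vert_{\mathscr M}$ to the homogeneous seminorm, implicitly relying on the equivalent norm characterizations of \cite[Theorem 4.1.1]{mazmult} that you also invoke, so your treatment is if anything the more scrupulous one.
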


\begin{proof}
Let us denote by $\Lambda$ the operator $\sqrt{-\Delta}$ and write
$$
\int_{\RR^{n-1}} a_j\, f\, \partial_j g\, dx' = 
\int_{\RR^{n-1}} \Lambda^{1/2}(a_j\, f)\,  \Lambda^{-1/2}(\partial_j g)\, dx'\, .
$$

We have
\begin{gather*}
\left| \int_{\RR^{n-1}} a_j\, f\, \partial_j g\, dx' \right| \leq
\Vert \Lambda^{1/2}(a_j\, f) \Vert_{L^2(\RR^{n-1})}
\Vert \Lambda^{-1/2}(\partial_j g) \Vert_{L^2(\RR^{n-1})} \leq \\
\Vert a\Vert_{\mathscr M} 
\Vert \Lambda^{1/2}f \Vert_{L^2(\RR^{n-1})}
\Vert \Lambda^{1/2}g  \Vert_{L^2(\RR^{n-1})}
\leq
\Vert a\Vert_{\mathscr M} \Vert \nabla f\Vert_{L^2(\RR^n_+)}
\Vert \nabla g\Vert_{L^2(\RR^n_+)}\, .
\end{gather*}
\end{proof}

The next Theorem provides a sufficient condition for the $L^p$-dissipativity of operator \eqref{eq:odp} under the assumption that
\begin{equation}\label{eq:condM}
\Vert \Imm a \Vert_{\mathscr M} <
\frac{4}{p\, p'}\, .
\end{equation}

\begin{theorem}\label{th:6.2}
Suppose condition \eqref{eq:condM} is satisfied.
If there exists a real vector $\Gamma\in L^2_{\text{loc}}(\RR^n)$ such that
\begin{equation}
    -\partial_{j}(\Real a_{j}) \, \delta(x_n) \leq \frac{p}{2}\left(\frac{4}{p\, 
    p'}- \Vert \Imm a \Vert_{\mathscr M}\right) ( \dive \Gamma - 
    |\Gamma|^{2})
    \label{eq:verbmaz}
\end{equation}
in $\RR^n$, in the sense of distributions, then the operator \eqref{eq:odp} is $L^p$-dissipative.
\end{theorem}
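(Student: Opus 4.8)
The plan is to verify the dissipativity inequality directly. Writing the form $\elle$ associated with \eqref{eq:odp} as the sum of the Dirichlet integral $\int_{\RR^n_+}\langle\nabla u,\nabla v\rangle\,dx$ over the half-space and the boundary term $-\int_{\RR^{n-1}}\sum_{j=1}^{n-1}a_j(\de_j u)\overline v\,dx'$, I would treat the case $p\ge 2$ in detail (the range $1<p<2$ follows from the dual computation, interchanging the roles of $p$ and $p'$) and compute $\Real\elle(u,|u|^{p-2}u)$. Setting $u=|u|e^{i\theta}$ and using the pointwise identity $\Real\langle\nabla u,\nabla(|u|^{p-2}u)\rangle=(p-1)|u|^{p-2}|\nabla|u||^2+|u|^p|\nabla\theta|^2$, the Dirichlet part equals $\tfrac{4}{pp'}A+B$, where $A=\int_{\RR^n_+}|\nabla(|u|^{p/2})|^2\,dx$ and $B=\int_{\RR^n_+}|u|^p|\nabla\theta|^2\,dx$; here the factor $\tfrac{4}{pp'}=\tfrac{4(p-1)}{p^2}$ arises precisely from $(p-1)|u|^{p-2}|\nabla|u||^2=\tfrac{4}{pp'}|\nabla(|u|^{p/2})|^2$. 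Both $A$ and $B$ are nonnegative, and the whole argument amounts to showing that the boundary term cannot overcome this positivity.

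For the boundary term I would split $a_j=\Real a_j+i\,\Imm a_j$. The real part, after integrating by parts on $\RR^{n-1}$ and using $\Real[(\de_j u)\overline u]=|u|\de_j|u|$, contributes $-\tfrac1p\int_{\RR^{n-1}}(-\de_j(\Real a_j))|u|^p\,dx'$. The imaginary part, using $\Imm[(\de_j u)\overline u]=|u|^2\de_j\theta$, contributes $\int_{\RR^{n-1}}(\Imm a_j)|u|^p\de_j\theta\,dx'$; the key observation is that, with the auxiliary function $w=|u|^{p/2}e^{i\theta}$, one has $\Imm(\overline w\,\de_j w)=|u|^p\de_j\theta$ and $\int_{\RR^n_+}|\nabla w|^2\,dx=A+B$. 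Hence Lemma \ref{lem:mult}, applied with $f=\overline w$ and $g=w$ together with $|\Imm z|\le|z|$, bounds the imaginary boundary term in absolute value by $\|\Imm a\|_{\mathscr M}(A+B)$.

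To control the real boundary term I would test the distributional inequality \eqref{eq:verbmaz} against the nonnegative function $\psi^2$, where $\psi=|u|^{p/2}$ is extended to $\RR^n$ by even reflection across $\{x_n=0\}$, so that $\int_{\RR^n}|\nabla\psi|^2\,dx=2A$ and $\psi^2|_{x_n=0}=|u|^p$. Pairing the left-hand side of \eqref{eq:verbmaz} with $\psi^2$ reproduces $\int_{\RR^{n-1}}(-\de_j(\Real a_j))|u|^p\,dx'$, while the right-hand side is estimated by the elementary ground-state inequality $\int_{\RR^n}(\dive\Gamma-|\Gamma|^2)\psi^2\,dx\le\int_{\RR^n}|\nabla\psi|^2\,dx$, which follows at once from $\int|\nabla\psi+\Gamma\psi|^2\ge0$ after integrating $\Gamma\cdot\nabla(\psi^2)$ by parts (legitimate since $\Gamma$ is real and in $L^2_{\mathrm{loc}}$). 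Combining these, the factor $\tfrac p2$ in \eqref{eq:verbmaz}, the factor $2$ from reflection, and the prefactor $\tfrac1p$ conspire to give exactly $\tfrac1p\int_{\RR^{n-1}}(-\de_j(\Real a_j))|u|^p\,dx'\le\big(\tfrac{4}{pp'}-\|\Imm a\|_{\mathscr M}\big)A$, hence the real boundary contribution is bounded below by $-\big(\tfrac{4}{pp'}-\|\Imm a\|_{\mathscr M}\big)A$.

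Assembling the three estimates gives $\Real\elle(u,|u|^{p-2}u)\ge\big(\tfrac{4}{pp'}-\|\Imm a\|_{\mathscr M}\big)A+(1-\|\Imm a\|_{\mathscr M})B-\big(\tfrac{4}{pp'}-\|\Imm a\|_{\mathscr M}\big)A=(1-\|\Imm a\|_{\mathscr M})B\ge0$, where the last step uses $\|\Imm a\|_{\mathscr M}<\tfrac{4}{pp'}\le1$, the bound $\tfrac{4}{pp'}\le1$ being just $pp'\ge4$. The main obstacle, requiring the most care, is the precise matching of constants in the real-part estimate: one must read \eqref{eq:verbmaz} correctly as a distributional inequality supported on $\{x_n=0\}$, choose the even reflection so that no spurious boundary term appears in the ground-state inequality, and check that the three numerical factors cancel exactly, so that the budget $\tfrac{4}{pp'}-\|\Imm a\|_{\mathscr M}$ left over after absorbing the imaginary part is neither over- nor under-spent. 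The second delicate ingredient is the use of Lemma \ref{lem:mult} with $w=|u|^{p/2}e^{i\theta}$, whose Dirichlet energy is exactly $A+B$, allowing the imaginary boundary term to be charged against the multiplier norm $\|\Imm a\|_{\mathscr M}$.
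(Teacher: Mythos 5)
Your proof is correct, and its overall architecture coincides with the paper's: you pass to the harmonic extension $U$, reduce the $L^p$-dissipativity to the half-space inequality \eqref{eq:ghjk2} (your decomposition $\tfrac{4}{pp'}A+B$ of the Dirichlet part is exactly the paper's $\int_{\RR^n_+}\bigl(|\nabla V|^2-(1-2/p)^2|\,\nabla|V|\,|^2\bigr)\,dx$ written in polar form, with $V=|U|^{(p-2)/2}U$), and you absorb the imaginary part through Lemma \ref{lem:mult} applied to $w=|u|^{p/2}e^{i\theta}$, which is precisely \eqref{eq:ineqlemma}. The one genuinely different ingredient is the treatment of the real part: the paper invokes \cite[Th. 5.1]{mazverb}, by which \eqref{eq:verbmaz} is \emph{equivalent} to the trace inequality \eqref{eq:speri-1}, and then specializes to even reflections; you instead prove the only implication actually needed (\eqref{eq:verbmaz} implies the trace inequality) by hand, pairing the distributional inequality with $\psi^2$ and expanding $\int_{\RR^n}|\nabla\psi+\Gamma\psi|^2\,dx\geq 0$. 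This is the elementary direction of the Maz'ya--Verbitsky characterization, so your argument is self-contained at this point; the citation remains indispensable only where the converse implication is used, namely in the paper's necessity theorem and in the real-coefficient characterization. Your choice of test function $\psi=|V|$ evenly reflected (energy $2A$) rather than $V$ itself (energy $2(A+B)$) is marginally sharper and is what produces the extra margin $(1-\Vert\Imm a\Vert_{\mathscr M})B$ in your final estimate. Two caveats, neither fatal and both shared with the paper: the ground-state/trace inequality is justified for compactly supported functions but is applied to $\psi$ built from the harmonic extension, which is not compactly supported, so a routine approximation step is being used silently; and your parenthetical on $1<p<2$ should not be read literally --- interchanging $p$ and $p'$ would suggest needing \eqref{eq:verbmaz} with $p'/2$ in place of $p/2$, whereas the correct computation for $\Real\elle(|u|^{p'-2}u,u)$ (as in the paper) leads back to the very same inequality \eqref{eq:ghjk2}, with the same coefficient $1/p$ on the boundary term, so the hypothesis as stated suffices.
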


\begin{proof}
It is well known that  $-\partial /\partial_{x_n}u(x',0)=\Lambda(u)$
where, as in the previous Lemma, $\Lambda=\sqrt{-\Delta}$.
This permits us to introduce the sesquilinear form
\begin{equation}\label{eq:newdefelle}
\elle(u,v)=-\int_{\RR^{n-1}}\lan \Lambda^{1/2} u,
 \Lambda^{1/2}v \ran\, dx' +
 \int_{\RR^{n-1}} \lan\sum_{j=1}^{n-1}a_j \partial_j u, v\ran\, dx'\, .
\end{equation}

We say that the operator $\lambda\cdot\nabla$ is 
$L^{p}$-dissipative if conditions 
\eqref{eq:defdis1} and \eqref{eq:defdis2} are satisfied
for any $u\in \Cspt^{1}(\RR^{n-1})$,
the form $\elle$ being given by \eqref{eq:newdefelle}.

Suppose $p\geq 2$. Denote by $U$ the harmonic extension of $u$,
i.e.
$$
U(x',x_n)= \frac{2}{\omega_{n}}\int_{\RR^{n-1}}u(y')\,
\frac{x_{n}}{(|x'-y'|^{2}+x_{n}^{2})^{n/2}}\, dy'
$$
$\omega_n$ being the measure of the unit sphere in $\RR^n$.

Integrating by parts we get
\begin{gather*}
\Real\elle(u,|u|^{p-2}u) = \\
    \Real \int_{\RR^{n-1}} \frac{\partial U}{\partial x_n}\,
|u|^{p-2} \overline{u}\, dx' + 
\Real \int_{\RR^{n-1}} \sum_{j=1}^{n-1} a_{j}\frac{\partial u}{\partial x_j}
\, |u|^{p-2} \overline{u}\, dx' =\\
-\Real \int_{\RR^{n}_+}\nabla U \cdot \nabla (|U|^{p-2} \overline{U})\, dx + 
\Real \int_{\RR^{n-1}} \sum_{j=1}^{n-1} a_{j}\frac{\partial u}{\partial x_j}
\, |u|^{p-2} \overline{u}\, dx'\, .
\end{gather*}

Therefore, for $p\geq 2$, the operator $\lambda\cdot\nabla$ is $L^{p}$-dissipative if
and only if
\begin{equation}\label{eq:ghjk}
\Real \int_{\RR^{n-1}} \sum_{j=1}^{n-1} a_{j}\frac{\partial u}{\partial x_j}
\, |u|^{p-2} \overline{u}\, dx' \leq
\Real \int_{\RR^{n}_+}\nabla U \cdot \nabla (|U|^{p-2} \overline{U})\, dx
    \end{equation}
for any $u\in  \Cspt^{1}(\RR^{n-1})$, $U$ being the harmonic extension
of $u$ to $\RR^n_+$. 

Setting $V=|U|^{(p-2)/2}U$ and $v=|u|^{(p-2)/2}u$, we get
$$
 \, |u|^{p-2}\overline{u}\, \partial_j u=
-(1-2/p)\,|v|\,\partial_j |v| + \overline{v}\, \de_j v
$$
and then
$$
\Real(|u|^{p-2}\overline{u}\, \partial_j u)=
-(1-2/p)\,|v|\,\partial_j |v| + \Real (\overline{v}\, \de_j v
) = \frac{1}{p} \partial_j (|v|^2).
$$

With similar computations we find
$$
\Real(\nabla U \cdot \nabla (|U|^{p-2} \overline{U})) =
|\nabla V|^2 - (1-2/p)^2 |\, \nabla|V|\, |^2
$$

Inequality \eqref{eq:ghjk} becomes
\begin{equation}\label{eq:ghjk2}
\begin{gathered}
-\frac{1}{p} \int_{\RR^{n-1}} \partial_j(\Real a_j)\,  |v|^2\, dx'
-\int_{\RR^{n-1}}\Imm a_j\, \Imm(\overline{v}\, \partial_j v)\, dx' \leq
\\
\int_{\RR^{n}_+}(|\nabla V|^2 - (1-2/p)^2 |\, \nabla|V|\, |^2)\, dx\, .
\end{gathered}
\end{equation} 

Lemma \ref{lem:mult} implies that
\begin{equation}\label{eq:ineqlemma}
\left| \int_{\RR^{n-1}}\Imm a_j\, \Imm(\overline{v}\, \partial_j v)\, dx' \right| \leq \Vert \Imm a \Vert_{\mathscr M} \int_{\RR^n_+}
|\nabla V|^2 dx
\end{equation}

On the other hand, inequality \eqref{eq:verbmaz} is the necessary and sufficient condition for the validity of the inequality
\begin{equation}\label{eq:speri-1}
-\int_{\RR^{n-1}} \partial_j(\Real a_j) \,  |v|^2\, dx'
\leq 
\frac{p}{2} \left(\frac{4}{p\, 
    p'}- \Vert \Imm a \Vert_{\mathscr M}\right)\int_{\RR^n}
    |\nabla V|^2\, dx
\end{equation}
for any $V\in\Cspt^\infty(\RR^n)$, $v$ being the restriction of $V$
on $\RR^{n-1}$ (see \cite[Th. 5.1]{mazverb}).
In particular, we find
\begin{equation}\label{eq:speri}
-\int_{\RR^{n-1}} \partial_j(\Real a_j) \,  |v|^2\, dx'
\leq 
p\left(\frac{4}{p\, 
    p'}- \Vert \Imm a \Vert_{\mathscr M}\right)\int_{\RR^n_+}
    |\nabla V|^2\, dx
\end{equation}
for any function $V\in\Cspt^\infty(\RR^n)$ which is even with
respect to $x_n$.
Since $|\, \nabla|V|\, |\leq |\nabla V|$ and $1 - (1-2/p)^2 = 
4/(p\, p')$, we have also
$$
\frac{4}{p\, p'} \int_{\RR^n_+} |\nabla V|^2\, dx \leq
\int_{\RR^{n}_+}(|\nabla V|^2 - (1-2/p)^2 |\, \nabla|V|\, |^2)\, dx\, . 
$$

This inequality, together with \eqref{eq:ineqlemma} and \eqref{eq:speri},
show that \eqref{eq:ghjk2} holds and 
the operator $\lambda\cdot\nabla$ is $L^{p}$-dissipative.

If $1<p<2$ 
we have to show that  
$$
\Real\elle(|u|^{p'-2}u,u) \leq 0
$$
for any $u\in  \Cspt^{1}(\RR^{n-1})$.

Arguing as for \eqref{eq:ghjk} we find that the operator $\lambda\cdot\nabla$ is $L^{p}$-dissipative if
and only if
$$
\Real \int_{\RR^{n-1}} \sum_{j=1}^{n-1} a_{j}\frac{\partial (|u|^{p'-2}u)}{\partial x_j}
\, \overline{u}\, dx' \leq
\Real \int_{\RR^{n}_+}\nabla (|U|^{p'-2}U) \cdot \nabla \overline{U}\, dx\, .
$$

Setting $V=|V|^{p'-2}V$ and $v=|u|^{p'-2}v$, we have
\begin{gather*}
\Real(|u|^{p'-2}u\, \partial_j \overline{u})=
(1-2/p')\,|v|\,\partial_j |v| + \Real (v\, \de_j \overline{v}
) = \frac{1}{p} \partial_j (|v|^2),\\
\Real(\nabla (|U|^{p'-2}U) \cdot \nabla \overline{U}) = 
|\nabla V|^2 - (1-2/p)^2 |\, \nabla|V|\, |^2.
\end{gather*}

Therefore the operator $\lambda\cdot\nabla$ is $L^p$-dissipative if
and only if \eqref{eq:ghjk2} holds and
the proof proceeds as in the case $p\geq 2$.
\end{proof}

    The next Theorem provides a necessary condition,
    similar to the previous one, 
    but which contains a different constant. We remark
    that in the next Theorem we do not require the
    smallness of $\Vert \Imm a \Vert_{\mathscr M}$.
    
    \begin{theorem}
    If the operator $\lambda\cdot\nabla$ is $L^{p}$-dissipative, then there exists 
a real vector $\Gamma\in L^2_{\text{loc}}(\RR^n)$ such that we have the inequality
$$
    -\partial_{j}(\Real a_{j}) \, \delta(x_n) \leq \frac{p}{2}\left(1 +\Vert \Imm a \Vert_{\mathscr M}\right) ( \dive \Gamma - 
    |\Gamma|^{2})
$$
    in the sense of distributions.
    \end{theorem}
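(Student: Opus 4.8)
The plan is to reverse the chain of equivalences established in the proof of Theorem~\ref{th:6.2}. For $p\ge 2$ the $L^p$-dissipativity of $\lambda\cdot\nabla$ is equivalent to inequality \eqref{eq:ghjk2} holding for every $u\in\Cspt^1(\RR^{n-1})$, where $v=|u|^{(p-2)/2}u$, $U$ is the harmonic extension of $u$ and $V=|U|^{(p-2)/2}U$. Since $U$ is harmonic, the right-hand side of \eqref{eq:ghjk2} may be rewritten as the boundary functional $\Real\int_{\RR^{n-1}}\Lambda u\,|u|^{p-2}\overline{u}\,dx'$, because $\int_{\RR^n_+}\nabla U\cdot\nabla(|U|^{p-2}\overline{U})=-\int_{\RR^{n-1}}\partial_{x_n}U\,|u|^{p-2}\overline{u}=\int_{\RR^{n-1}}\Lambda u\,|u|^{p-2}\overline{u}$. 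The goal is to estimate every term so as to isolate $-\partial_j(\Real a_j)$ against the Dirichlet energy of the harmonic extension of $v$, and then to apply the Maz'ya--Verbitsky representation theorem \cite[Th. 5.1]{mazverb} exactly as in Theorem~\ref{th:6.2}, but now in the opposite direction.

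The two estimates I would use are the mirror images of those in Theorem~\ref{th:6.2}. Denote by $V_h$ the harmonic extension of $v$. First, for the imaginary term I apply Lemma~\ref{lem:mult} with $f=g=V_h$ (the trace integral depends only on $v$, so any $H^1(\RR^n_+)$ representative is admissible), obtaining $\bigl|\int_{\RR^{n-1}}\Imm a_j\,\Imm(\overline{v}\,\partial_j v)\,dx'\bigr|\le\Vert\Imm a\Vert_{\mathscr M}\int_{\RR^n_+}|\nabla V_h|^2\,dx$. Second, and this is the crucial point, I bound the energy term from above by the half-norm of $v$:
\[
\Real\int_{\RR^{n-1}}\Lambda u\,|u|^{p-2}\overline{u}\,dx'
\le \int_{\RR^{n-1}}|\Lambda^{1/2}v|^{2}\,dx'
= \int_{\RR^{n}_+}|\nabla V_h|^{2}\,dx .
\]
This is the \emph{upper} Stroock--Varopoulos inequality, whose pointwise kernel form $\Real\bigl[(a-b)\bigl(|a|^{p-2}\overline{a}-|b|^{p-2}\overline{b}\bigr)\bigr]\le\bigl|\,|a|^{(p-2)/2}a-|b|^{(p-2)/2}b\,\bigr|^2$ reduces, for real $a,b\ge 0$ with $t=b/a$, to the elementary identity $(1-t^{p/2})^2-(1-t)(1-t^{p-1})=t\,(1-t^{(p-2)/2})^2\ge 0$. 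Combining the two estimates with the factor $1/p$ already present in \eqref{eq:ghjk2} yields $-\int_{\RR^{n-1}}\partial_j(\Real a_j)\,|v|^2\,dx'\le p\,(1+\Vert\Imm a\Vert_{\mathscr M})\int_{\RR^n_+}|\nabla V_h|^2\,dx$.

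It remains to upgrade this to a trace inequality valid for arbitrary test functions. The map $u\mapsto v=|u|^{(p-2)/2}u$ sends $\Cspt^1(\RR^{n-1})$ onto a set that is dense in the relevant energy space, and both sides are continuous in $v$; hence by approximation the inequality holds for every $w\in\Cspt^\infty(\RR^{n-1})$ with the right-hand side replaced by the minimal (harmonic-extension) energy of $w$. Extending evenly in $x_n$, so that the full-space Dirichlet integral is twice the half-space one, this becomes $-\int_{\RR^{n-1}}\partial_j(\Real a_j)\,|w|^2\,dx'\le\tfrac p2(1+\Vert\Imm a\Vert_{\mathscr M})\int_{\RR^n}|\nabla W|^2\,dx$ for all $W\in\Cspt^\infty(\RR^n)$. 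Applying \cite[Th. 5.1]{mazverb} with $C=\tfrac p2(1+\Vert\Imm a\Vert_{\mathscr M})$ produces the desired real vector $\Gamma\in L^2_{\mathrm{loc}}(\RR^n)$. The range $1<p<2$ is handled by the same computation starting from $\Real\elle(|u|^{p'-2}u,u)\le 0$, precisely as in Theorem~\ref{th:6.2}.

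The main obstacle is the second estimate. The naive bound $\int_{\RR^n_+}(|\nabla V|^2-(1-2/p)^2|\,\nabla|V|\,|^2)\,dx\le\int_{\RR^n_+}|\nabla V|^2\,dx$ carries the \emph{wrong} energy, since for $p\ne 2$ one has $\int_{\RR^n_+}|\nabla V|^2\,dx>\int_{\RR^n_+}|\nabla V_h|^2\,dx$ (the transform $V$ is not harmonic, whereas $V_h$ minimizes the Dirichlet energy of the trace $v$); such an estimate is strictly weaker than what the Maz'ya--Verbitsky characterization requires, which is phrased through the minimal-energy extension. Only the sharp upper Stroock--Varopoulos bound restores the harmonic-extension energy and closes this gap. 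Verifying the complex-valued pointwise inequality, and making the density argument rigorous where $w$ vanishes (so that $u=|w|^{(2-p)/p}w$ need not be smooth), are the remaining technical points.
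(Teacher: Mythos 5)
Your proposal follows the paper's skeleton --- the reduction to \eqref{eq:ghjk2}, Lemma \ref{lem:mult} for the term with $\Imm a_j$, even reflection, and \cite[Th. 5.1]{mazverb} at the end --- but the step you yourself call crucial is false. For real arguments of opposite sign your pointwise bound goes the wrong way: with $a=1$, $b=-t$, $t>0$,
$$
\Real\bigl[(a-b)\bigl(|a|^{p-2}\bar a-|b|^{p-2}\bar b\bigr)\bigr]-\bigl|\,|a|^{(p-2)/2}a-|b|^{(p-2)/2}b\,\bigr|^2
=(1+t)(1+t^{p-1})-(1+t^{p/2})^2=t\,\bigl(1-t^{(p-2)/2}\bigr)^2,
$$
which is strictly positive unless $t=1$. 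In fact for all real $a,b$ one has the identity
$$
\bigl(|a|^{(p-2)/2}a-|b|^{(p-2)/2}b\bigr)^2-(a-b)\bigl(|a|^{p-2}a-|b|^{p-2}b\bigr)=ab\,\bigl(|a|^{(p-2)/2}-|b|^{(p-2)/2}\bigr)^2,
$$
so your inequality holds exactly when the two values have the same sign; your reduction $t=b/a\geq 0$ covers only that case. The defect survives integration: expressing both $\Real\lan \Lambda u,|u|^{p-2}u\ran$ and $\Vert \Lambda^{1/2}(|u|^{(p-2)/2}u)\Vert^2$ through the jump kernel of $\Lambda$, their difference for real $u$ equals $\tfrac12\iint u(x)u(y)\bigl(|u(x)|^{(p-2)/2}-|u(y)|^{(p-2)/2}\bigr)^2K(x,y)\,dx\,dy$; taking $u$ close to $1$ on one ball and to $-t$ on a distant ball, with sharp transitions and zero elsewhere, makes this negative, because all pairs with $u(x)u(y)=0$ or $|u(x)|=|u(y)|$ drop out and only the sign-crossing pairs remain. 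Since the admissible functions here are complex valued, the estimate $\Real\int\Lambda u\,|u|^{p-2}\bar u\,dx'\leq\Vert\Lambda^{1/2}v\Vert^2$ on which your whole argument rests is not available: the ``remaining technical point'' you defer at the end is precisely where the proof collapses.

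By contrast, the paper's own proof never needs a sharp upper bound of Stroock--Varopoulos type: it keeps exactly the crude estimate you reject, i.e.\ it drops $-(1-2/p)^2|\,\nabla|V|\,|^2$ and applies \eqref{eq:ineqlemma} to obtain $-\tfrac1p\int\partial_j(\Real a_j)|v|^2dx'\leq(1+\Vert\Imm a\Vert_{\mathscr M})\int_{\RR^n_+}|\nabla V|^2dx$, then passes to arbitrary $V\in\Cspt^\infty(\RR^n)$ through an odd/even splitting and invokes \cite[Th. 5.1]{mazverb}; this crude bound is the origin of the non-sharp constant $\tfrac p2(1+\Vert\Imm a\Vert_{\mathscr M})$ and the reason no smallness of $\Vert\Imm a\Vert_{\mathscr M}$ is required. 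Your objection that $V=|U|^{(p-2)/2}U$ is not the minimal-energy extension of $v$ is a fair comment on that passage, but the way to make it airtight is not a complex upper Stroock--Varopoulos inequality (false, as above); it is to test \eqref{eq:ghjk2} with \emph{nonnegative} $u$ only. Then $v=u^{p/2}\geq 0$, the term containing $\Imm a_j$ vanishes identically, and your pointwise identity, now legitimately applied, gives $\Real\lan\Lambda u,u^{p-1}\ran\leq\Vert\Lambda^{1/2}v\Vert^2$, hence $-\tfrac1p\int\partial_j(\Real a_j)v^2dx'\leq\Vert\Lambda^{1/2}v\Vert^2$. For a general complex trace $w$ the left-hand side depends only on $|w|$, and $\Vert\Lambda^{1/2}|w|\Vert\leq\Vert\Lambda^{1/2}w\Vert$ by the triangle inequality in the jump representation; combined with $2\Vert\Lambda^{1/2}w\Vert^2\leq\int_{\RR^n}|\nabla W|^2$ (modulo the same density argument you already invoke) and \cite[Th. 5.1]{mazverb}, this proves the theorem, indeed with the better constant $\tfrac p2$ in place of $\tfrac p2(1+\Vert\Imm a\Vert_{\mathscr M})$.
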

    
  \begin{proof}
  If $\lambda\cdot\nabla$ is $L^{p}$-dissipative, we have the inequality \eqref{eq:ghjk}. Keeping in mind
  \eqref{eq:ineqlemma} we find
\begin{gather*}
-\frac{1}{p} \int_{\RR^{n-1}} \partial_j(\Real a_j)\,  |v|^2\, dx' \leq 
\int_{\RR^{n}_+}(|\nabla V|^2 - (1-2/p)^2 |\, \nabla|V|\, |^2)\, dx + \\
\int_{\RR^{n-1}}\Imm a_j\, \Imm(\overline{v}\, \partial_j v)\, dx' \leq 
 (1+\Vert \Imm a\Vert_{\mathscr M})\int_{\RR^{n}_+}|\nabla V|^2\, dx\, . 
\end{gather*}

Let us now consider $V\in\Cspt^\infty(\RR^n)$ and write
$V=V_o +V_e$, where $V_o$ and $V_e$ are odd and even respectively.
The last inequality we have written leads to
\begin{gather*}
-\frac{1}{p} \int_{\RR^{n-1}} \partial_j(\Real a_j)\,  |v|^2\, dx' \leq
 \frac{1+\Vert \Imm a\Vert_{\mathscr M}}{2}\int_{\RR^{n}}|\nabla V_o|^2\, dx \leq \\
  \frac{1+\Vert \Imm a\Vert_{\mathscr M}}{2}\int_{\RR^{n}}|\nabla V|^2\, dx 
\end{gather*}
and the thesis follows from the already quoted result \cite[Th. 5.1]{mazverb}.

  \end{proof}

 A different sufficient condition can be obtained
 by using the concept of capacity (see \eqref{eq:defcap}).
 
\begin{theorem}
 Suppose condition \eqref{eq:condM} is satisfied. If
  \begin{equation}\label{eq:condcap}
-\frac{1}{\hbox{\rm cap}_{{\Omega}}(F)}\int_{F\cap\RR^{n-1}} \partial_j(\Real a_j)\, dx'
\leq \frac{p}{8}\left(\frac{4}{p\, 
    p'}- \Vert \Imm a \Vert_{\mathscr M}\right) 
\end{equation}
for all compact sets $F\subset \RR^n$, then
the operator $\lambda\cdot\nabla$ is $L^{p}$-dissipative.  
\end{theorem}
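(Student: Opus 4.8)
The plan is to reduce everything to the proof of Theorem \ref{th:6.2}. In that argument the $L^p$-dissipativity of $\lambda\cdot\nabla$ was shown to be equivalent to inequality \eqref{eq:ghjk2}, and the hypothesis \eqref{eq:verbmaz} entered only through the trace inequality \eqref{eq:speri-1}. Once \eqref{eq:speri-1} is available, the remainder of the proof is unchanged: one restricts to $V$ even in $x_n$ to obtain \eqref{eq:speri}, combines this with \eqref{eq:ineqlemma} of Lemma \ref{lem:mult} and with the elementary facts $|\, \nabla|V|\, |\leq|\nabla V|$ and $1-(1-2/p)^2=4/(p\,p')$ to verify \eqref{eq:ghjk2}, and the case $1<p<2$ reduces to the case $p\geq 2$ exactly as before. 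Hence it suffices to show that the capacitary hypothesis \eqref{eq:condcap} implies \eqref{eq:speri-1}.

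To this end I would appeal to Maz'ya's capacitary criterion for the validity of the trace inequality
\begin{equation*}
\int_{\RR^n}|V|^2\, d\mu \leq C\int_{\RR^n}|\nabla V|^2\, dx ,\qquad V\in\Cspt^\infty(\RR^n),
\end{equation*}
of the type recalled in Section \ref{sec:scalar} (cf.\ the sufficiency of \eqref{eq:schro1} for \eqref{eq:schro}). Here one takes the nonnegative measure $\mu=-\partial_j(\Real a_j)\,\delta(x_n)$ carried by $\RR^{n-1}\subset\RR^n$ and the constant $C=\frac{p}{2}\left(\frac{4}{p\,p'}-\Vert\Imm a\Vert_{\mathscr M}\right)$, which is positive by \eqref{eq:condM}. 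That criterion guarantees the trace inequality once $\mu(F)\leq\frac14\,C\,\hbox{\rm cap}_\Omega(F)$ for every compact set $F$; since $\mu(F)=-\int_{F\cap\RR^{n-1}}\partial_j(\Real a_j)\,dx'$ and $\frac14\,C=\frac{p}{8}\left(\frac{4}{p\,p'}-\Vert\Imm a\Vert_{\mathscr M}\right)$, this is precisely condition \eqref{eq:condcap}. Thus \eqref{eq:condcap} yields \eqref{eq:speri-1}, and the proof concludes as indicated above.

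I expect no serious obstacle here: the target inequality \eqref{eq:speri-1} is identical to the one already used in Theorem \ref{th:6.2}, and the present theorem merely replaces the differential sufficient condition \eqref{eq:verbmaz} by a capacitary one. The only point requiring care is the bookkeeping of the factor $\frac14$ relating the capacitary bound to the trace constant $C$, the same factor that separates the sufficient condition \eqref{eq:schro1} from the target quantity $\frac{4}{p\,p'}$ in \eqref{eq:schro}. One should also record at the outset that $-\partial_j(\Real a_j)$ is to be understood as a nonnegative Radon measure on $\RR^{n-1}$, so that the left-hand side of \eqref{eq:condcap} is well defined and Maz'ya's criterion is applicable.
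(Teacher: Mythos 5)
Your proposal follows essentially the same route as the paper's proof: reduce to the trace inequality \eqref{eq:speri-1} (after which the argument of Theorem \ref{th:6.2} applies verbatim), and obtain \eqref{eq:speri-1} from \eqref{eq:condcap} via Maz'ya's capacitary criterion, with the same bookkeeping of the factor $\tfrac14$ that relates \eqref{eq:schro1} to \eqref{eq:schro} and turns $\tfrac{p}{2}\bigl(\tfrac{4}{p\,p'}-\Vert\Imm a\Vert_{\mathscr M}\bigr)$ into $\tfrac{p}{8}\bigl(\tfrac{4}{p\,p'}-\Vert\Imm a\Vert_{\mathscr M}\bigr)$. The one point where you diverge is your closing stipulation that $-\partial_j(\Real a_j)$ be a nonnegative Radon measure: this is an extra hypothesis not present in the statement, and the paper avoids it by citing \cite[Remark 5.2]{mazyasimon}, which asserts that the capacitary condition \eqref{eq:schro1} implies \eqref{eq:schro} even when $\mu$ is an arbitrary locally finite real-valued charge; with that remark your argument covers the theorem in full generality, whereas as written it proves a slightly weaker result.
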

\begin{proof}
We know that inequality \eqref{eq:schro} holds for any test function $w$ if condition \eqref{eq:schro1} is satisfied for all compact sets $F\subset \Omega$. As remarked in \cite[Remark 5.2]{mazyasimon}, \eqref{eq:schro1} implies \eqref{eq:schro} even without the requirement that $\mu\geq 0$, i.e. $\mu$ can be an arbitrary locally finite real valued charge.

Therefore, condition \eqref{eq:condcap} implies inequality
\eqref{eq:speri-1} and, as in Theorem  \ref{th:6.2}, the result follows.
\end{proof}

 In the case of the real oblique derivative problem we have a necessary and sufficient condition.

 \begin{theorem}
    Let us suppose that the coefficients $a_j$ in \eqref{eq:odp} are real valued ($j=1,\ldots,n-1$).  The operator $\lambda\cdot\nabla$ is $L^{p}$-dissipative if and only if
there exists a real vector $\Gamma\in L^2_{\text{loc}}(\RR^n)$ such that
\begin{equation}\label{verbmazreal}
    -\partial_{j}(\Real a_{j}) \, \delta(x_n) \leq \frac{2}{p'}( \dive \Gamma - 
    |\Gamma|^{2})
\end{equation}
in the sense of distributions.
\end{theorem}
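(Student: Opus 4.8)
The plan is to treat the two implications separately: the sufficiency will come at once from Theorem~\ref{th:6.2}, and essentially all the work lies in the necessity, where the sharp constant $2/p'$ must be produced (note that the general necessity theorem following Theorem~\ref{th:6.2} only yields the weaker constant $p/2$ when specialized to $\Imm a=0$).

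For the sufficiency I would simply specialize Theorem~\ref{th:6.2}. When the $a_j$ are real one has $\Imm a=0$, so $\Vert\Imm a\Vert_{\mathscr M}=0$ and condition \eqref{eq:condM} holds automatically; moreover the right-hand side of \eqref{eq:verbmaz} reduces to $\frac{p}{2}\cdot\frac{4}{p\,p'}(\dive\Gamma-|\Gamma|^{2})=\frac{2}{p'}(\dive\Gamma-|\Gamma|^{2})$, which is exactly the right-hand side of \eqref{verbmazreal}. Thus the existence of a real $\Gamma$ satisfying \eqref{verbmazreal} is precisely the hypothesis of Theorem~\ref{th:6.2}, and the $L^{p}$-dissipativity of $\lambda\cdot\nabla$ follows with no further argument.

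For the necessity I would revisit the reduction in the proof of Theorem~\ref{th:6.2}. Since $\Imm a=0$, the multiplier term in \eqref{eq:ghjk2} disappears and $L^{p}$-dissipativity is equivalent to
$$-\frac1p\int_{\RR^{n-1}}\partial_{j}(a_{j})\,|v|^{2}\,dx'\leq\int_{\RR^{n}_{+}}\big(|\nabla V|^{2}-(1-2/p)^{2}|\,\nabla|V|\,|^{2}\big)\,dx$$
for all $u$, with $V=|U|^{(p-2)/2}U$ and $U$ the harmonic extension of $u$. The decisive point, and the reason the real case is sharper than the general one, is that I would \emph{not} throw away the negative term $-(1-2/p)^{2}|\,\nabla|V|\,|^{2}$; discarding it via $|\nabla V|^{2}-(1-2/p)^{2}|\,\nabla|V|\,|^{2}\le|\nabla V|^{2}$ is exactly what costs the constant $p/2$ in the general necessity theorem. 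Instead I would test on real-valued, sign-definite data: for $u\ge0$ the harmonic extension is nonnegative, so $V=U^{p/2}\ge0$ and $|\,\nabla|V|\,|=|\nabla V|$, whence the right-hand side collapses exactly to $\frac{4}{p\,p'}\int_{\RR^{n}_{+}}|\nabla V|^{2}\,dx$. Multiplying by $p$ and extending $V$ evenly across $\{x_{n}=0\}$ (so that $\int_{\RR^{n}}|\nabla V|^{2}=2\int_{\RR^{n}_{+}}|\nabla V|^{2}$) would produce the form inequality
$$-\int_{\RR^{n-1}}\partial_{j}(a_{j})\,|v|^{2}\,dx'\leq\frac{2}{p'}\int_{\RR^{n}}|\nabla V|^{2}\,dx ,$$
$v$ being the trace of $V$, i.e.\ inequality \eqref{eq:speri-1} with the sharp constant $2/p'$; the Maz'ya--Verbitsky criterion \cite[Th.~5.1]{mazverb} then returns the real field $\Gamma\in L^{2}_{\mathrm{loc}}(\RR^{n})$ satisfying \eqref{verbmazreal}.

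The step I expect to be delicate is the passage from the inequality just obtained, which holds for the special extensions $V=U^{p/2}$, to the inequality \eqref{eq:speri-1} required for \emph{every} $V\in C_0^{\infty}(\RR^{n})$ by \cite[Th.~5.1]{mazverb}. Since $U^{p/2}$ is not harmonic it is not the minimal-energy extension of its own trace, so the energy appearing on the right is larger than the Dirichlet energy of the harmonic extension of $v$; one therefore has to verify that testing on this class of extensions suffices to force the full Maz'ya--Verbitsky inequality (and hence the existence of $\Gamma$). I would expect this to be the genuine obstacle, to be resolved by a density/approximation argument together with a sharp comparison — of Stroock--Varopoulos type — between the ``nonlinear'' energy $\int_{\RR^{n}_{+}}\!\big(|\nabla V|^{2}-(1-2/p)^{2}|\,\nabla|V|\,|^{2}\big)$ and the quadratic Dirichlet energy of the harmonic extension of the trace. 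It is precisely here that the hypothesis $\Imm a=0$ is used in full force, since it removes the term controlled through Lemma~\ref{lem:mult}, whose presence in the general case breaks the balance between the two sides and leaves only the non-sharp constant $p/2$.
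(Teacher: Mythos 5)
Your sufficiency half is correct and is, in substance, what the paper does: for real $a_j$ one has $\Imm a=0$, so \eqref{eq:condM} is automatic, the factor $\frac{p}{2}\bigl(\frac{4}{p\,p'}-\Vert\Imm a\Vert_{\mathscr M}\bigr)$ in \eqref{eq:verbmaz} becomes $\frac{2}{p'}$, and Theorem \ref{th:6.2} applies verbatim; this direction needs nothing more.

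The necessity half, however, is not a proof, and the repair you sketch would not close it. Up to the point you flag you are following the paper's reduction: restrict to real (in your case nonnegative) data, use $|\nabla V|=|\,\nabla|V|\,|$ to turn the right-hand side of \eqref{eq:ghjk2} into $\frac{4}{p\,p'}\int_{\RR^n_+}|\nabla V|^2dx$, multiply by $p$, reflect evenly, and invoke \cite[Th. 5.1]{mazverb}. But the step you leave open --- passing from the inequality for the special extensions $V=U^{p/2}$ ($U$ harmonic) to inequality \eqref{eq:speri-1} for \emph{all} $V\in\Cspt^\infty(\RR^n)$ --- cannot be resolved by a ``sharp comparison of Stroock--Varopoulos type'', because that comparison runs in the wrong direction. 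Since $U^{p/2}$ is an extension of $v=u^{p/2}$ and the harmonic extension of $v$ minimizes the Dirichlet integral among all extensions, one has
\begin{equation*}
\int_{\RR^n_+}|\nabla (U^{p/2})|^2\,dx \;\geq\; \Vert\Lambda^{1/2}v\Vert^2_{L^2(\RR^{n-1})},
\end{equation*}
which \emph{is} the Stroock--Varopoulos inequality for $\Lambda$, and it is strict for $p\neq2$ (indeed $\Delta(U^{p/2})=\tfrac{p}{2}(\tfrac{p}{2}-1)U^{p/2-2}|\nabla U|^2\not\equiv0$, so $U^{p/2}$ is never the minimizing extension). Thus dissipativity bounds $-\int\partial_j(a_j)|v|^2dx'$ by the \emph{larger} of the two energies, while the Maz'ya--Verbitsky criterion requires the bound by the \emph{smaller} one; no pointwise energy comparison can convert the former into the latter, and density does not help either, since each trace admits exactly one extension of the special form.

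Note also that this is precisely where your route departs from the paper: the paper performs no comparison between the special extension and the harmonic extension of its trace. It passes between \eqref{eq:ghjkreal} and the inequality for all $V\in\Cspt^1(\RR^n)$ by the same even/odd reflection argument used in Theorem \ref{th:6.2} and in the necessity theorem (the left-hand side sees $V$ only through its trace, and for even $V$ one has $\int_{\RR^n}|\nabla V|^2=2\int_{\RR^n_+}|\nabla V|^2$), and then concludes with \cite[Th. 5.1]{mazverb}. Your instinct that this equivalence is the delicate point singles out the one step the paper asserts rather than details, but as written your proposal stops short of proving it, and the Stroock--Varopoulos mechanism you invoke would, if anything, establish the opposite inequality.
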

\begin{proof}
The operator $\lambda\cdot\nabla$ being real, 
we consider the conditions \eqref{eq:defdis1} and \eqref{eq:defdis2}  for
real valued functions.
We remark that, if $V$ is a real valued function, we
have $|\nabla V|=|\nabla |V||$ and then 
$$
\int_{\RR^{n}_+}(|\nabla V|^2 - (1-2/p)^2 |\, \nabla|V|\, |^2)\, dx
= \frac{4}{p\, p'} \int_{\RR^n_+} |\nabla V|^2 dx
\, . 
$$

Therefore the $L^p$-dissipativity of  $\lambda\cdot\nabla$
occurs if and only if 
\begin{equation}\label{eq:ghjkreal}
\begin{gathered}
-\frac{1}{p} \int_{\RR^{n-1}} \partial_j(\Real a_j)\,  |v|^2\, dx'
 \leq
\frac{4}{p\, p'} \int_{\RR^n_+} |\nabla V|^2 dx\, .
\end{gathered}
\end{equation} 
Arguing as in the proof of Theorem \ref{th:6.2}, we find that
\eqref{eq:ghjkreal} holds if and only if 
$$
-\int_{\RR^{n-1}} \partial_j(\Real a_j)\,  |v|^2\, dx'
 \leq
\frac{2}{p'} \int_{\RR^n} |\nabla V|^2 dx\, .
$$
for any $V\in \Cspt^1(\RR^{n})$. 
Appealing again  to  \cite[Th. 5.1]{mazverb}, we see that
this inequality holds if and only if condition \eqref{verbmazreal}
is satisfied.
\end{proof}

\section{$L^p$-positivity of certain integral operators}\label{sec:intop}

The aim of this section is to prove the $L^p$-positivity of
the operator
\begin{equation}
    Tu(E)=\int_E\int^*_{\RR^n}[u(x)-u(y)]\, K(dx,dy)\,  .
    \label{defT}
\end{equation}
Here $E$ is a Borel set in $\RR^n$, $K(dx,dy)$ is a nonnegative Borel measure on $\RR^n\times \RR^n$, locally finite
outside the diagonal $\{(x,y)\in \RR^n\times \RR^n\ |\ x=y\}$
and the function $u$ belongs to $ \Cspt^{1}(\RR^n)$.

The integral  in \eqref{defT} has to be understood as
a principal value in the sense of Cauchy
$$
\lim_{\varepsilon\to 0^{+}}\int_E
\int_{\RR^n\setminus B_{\varepsilon}(x)}[u(x)-u(y)]\, K(dx,dy)\, 
$$
and we assume that such singular integral does exist for
any $u\in  \Cspt^{1}(\RR^n)$.

In what follows we shall make also the following assumptions
on the kernel $K(dx,dy)$:
\begin{enumerate}[(i)]
\item $K(E,F)=K(F,E)$ for any Borel sets $E,F \subset \RR^n$; \label{en:1}

\item\label{en:2b} for any compact set $E\subset \RR^n$ we have
$$
\iint_{E\times E} |x-y|^2K(dx,dy) < \infty\,;
$$

\item\label{en:3b} for any $R>0$ we have
$$
\int_{|x|<R}\int_{|y-x|>2R} K(dx,dy) < \infty\, .
$$
\end{enumerate}

As an example, consider the measure $K(dx,dy)=|x-y|^{-n-2s}dxdy$ ($0<s<1$);
it satisfies all the previous conditions and in this case the operator $T$ coincides, up to 
a constant factor, to the fractional power of Laplacian
$(-\Delta)^s$ (see, e.g., \cite[p.230]{cialmazbook}).

As in \eqref{eq:defdis1}-\eqref{eq:defdis2}, we say
that $T$ is $L^p$-positive if
\begin{equation}\label{eq:Tuup}
\begin{gathered}
\int_{\RR^n}\lan Tu,|u|^{p-2}u\ran  \geq 0, \quad \text{if } p\geq 2, \\
\int_{\RR^n}\lan T(|u|^{p'-2}u),u\ran  \geq 0, \quad \text{if } 1< p < 2,
\end{gathered}
\end{equation}
for any $u\in  \Cspt^{1}(\RR^n)$.

\begin{theorem}\label{th:7.1}
    Let $K(dx,dy)$ be a kernel satisfying the previous conditions. Then
    the operator \eqref{defT} is $L^p$-positive. More precisely we have
    the inequalities
  \begin{equation}\label{eq:disconK}
  \begin{gathered}
\int_{\RR^{n}}\lan Tu,|u|^{p-2}u\ran \geq
\frac{4}{p\, p'} \iint_{\RR^n\times \RR^n}  (|u(y)|^{p/2}- |u(x)|^{p/2})^2 K(dx,dy)\, , \text{if } p\geq 2;\\
\int_{\RR^n}\lan T(|u|^{p'-2}u),u\ran \geq
\frac{4}{p\, p'} \iint_{\RR^n\times \RR^n}  (|u(y)|^{p'/2}- |u(x)|^{p'/2})^2 K(dx,dy)\, , \text{if } 1< p < 2.
\end{gathered}
\end{equation}
\end{theorem}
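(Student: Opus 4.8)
The plan is to reduce the statement to an elementary scalar inequality after exploiting the symmetry hypothesis \eqref{en:1} on $K$. First I would read off that, treating $Tu$ as the charge $x\mapsto\int^*[u(x)-u(y)]\,K(dx,dy)$, the left-hand side of \eqref{eq:disconK} is the principal-value double integral
\[
\int_{\RR^n}\lan Tu,|u|^{p-2}u\ran=\iint_{\RR^n\times\RR^n}[u(x)-u(y)]\,\overline{|u(x)|^{p-2}u(x)}\,K(dx,dy),
\]
and then symmetrize it. Since the truncation region $\{|x-y|>\varepsilon\}$ is invariant under $(x,y)\mapsto(y,x)$ and $K$ is symmetric, averaging the integral with its swapped copy and letting $\varepsilon\to0^+$ gives
\[
\Real\int_{\RR^n}\lan Tu,|u|^{p-2}u\ran=\tfrac12\iint_{\RR^n\times\RR^n}\Real\big[(u(x)-u(y))\,\overline{(|u(x)|^{p-2}u(x)-|u(y)|^{p-2}u(y))}\big]\,K(dx,dy).
\]

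Second, I would establish the pointwise inequality, for $a,b\in\CC$ and $p\geq2$,
\[
\Real\big[(a-b)\,\overline{(|a|^{p-2}a-|b|^{p-2}b)}\big]\;\geq\;\frac{4}{p\,p'}\,(|a|^{p/2}-|b|^{p/2})^2 .
\]
Expanding the left-hand side yields $|a|^p+|b|^p-(|a|^{p-2}+|b|^{p-2})\Real(a\overline b)$; since the coefficient $|a|^{p-2}+|b|^{p-2}$ is nonnegative and $\Real(a\overline b)\leq|a|\,|b|$, this is at least $(s-t)(s^{p-1}-t^{p-1})$ with $s=|a|$, $t=|b|$, after the factorization $s^p+t^p-s^{p-1}t-st^{p-1}=(s-t)(s^{p-1}-t^{p-1})$. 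The remaining scalar inequality $(s-t)(s^{p-1}-t^{p-1})\geq\tfrac{4}{p\,p'}(s^{p/2}-t^{p/2})^2$ for $s,t\geq0$ follows at once from Cauchy--Schwarz applied to the representations $s^{p/2}-t^{p/2}=\tfrac p2\int_t^s\sigma^{p/2-1}d\sigma$ and $s^{p-1}-t^{p-1}=(p-1)\int_t^s\sigma^{p-2}d\sigma$. Integrating this pointwise bound against the nonnegative measure $K$, together with the symmetrized representation above, then gives the $L^p$-positivity and the quantitative lower bound asserted in \eqref{eq:disconK}.

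Third, for $1<p<2$ I would observe that the same computation carried out with $u$ replaced by $|u|^{p'-2}u$ reduces the second inequality in \eqref{eq:disconK} to the case already treated, with $p$ replaced by its conjugate $p'\geq2$ and the exponent $p/2$ replaced by $p'/2$; since $(p')'=p$ the constant $4/(p\,p')$ is unchanged, so no new work is needed.

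I expect the main obstacle to be not the algebra but the rigorous justification of the symmetrization for the conditionally convergent singular integral. The unsymmetrized integrand is only $O(|x-y|)$ near the diagonal, so it need not be absolutely integrable there; it is precisely the symmetrization that produces a factor $O(|x-y|^2)$ (using that $z\mapsto|z|^{p-2}z$ is locally Lipschitz for $p\geq2$ and that $u\in\Cspt^1(\RR^n)$), which is integrable by hypothesis \eqref{en:2b}, while the compact support of $u$ combined with \eqref{en:3b} controls the far field. Carrying out the limit $\varepsilon\to0^+$ under these two bounds, via dominated convergence, so as to identify the principal value in \eqref{defT} with the absolutely convergent symmetric integral, is the step that requires the most care.
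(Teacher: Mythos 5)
Your proposal is correct and follows essentially the same route as the paper: symmetrize the principal-value pairing into the absolutely convergent double integral $\frac12\iint[u(x)-u(y)][v(x)-v(y)]\,K(dx,dy)$ using the symmetry of $K$ together with conditions (ii)--(iii), apply the pointwise inequality $(s-t)(s^{p-1}-t^{p-1})\geq\frac{4}{p\,p'}(s^{p/2}-t^{p/2})^2$, and reduce the case $1<p<2$ to the conjugate exponent $p'\geq 2$. The only (minor) differences are that the paper cites the scalar inequality from the authors' monograph whereas you prove it via Cauchy--Schwarz, and that you carry the complex conjugations and real parts explicitly, a point the paper's write-up glosses over.
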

\begin{proof}
Let us observe that, in view of \eqref{en:1}, we may write
\begin{equation}
    \int_{\RR^{n}}\lan Tu,v\ran  =
    \frac{1}{2} \iint_{\RR^n\times\RR^n}
    [u(x)-u(y)][v(x)-v(y)]\, K(dx,dy)
    \label{eq:Tuv}
\end{equation}
for any $u, v \in  \Cspt^{1}(\RR^n)$. In fact, since $u$ and $v$ have
compact support and  
thanks to conditions \eqref{en:2b} and \eqref{en:3b}, the integral in the right hand side of \eqref{eq:Tuv} is absolutely convergent.
Now we may appeal to the dominated convergence Theorem to obtain
\eqref{eq:Tuv}.

Let $p\geq 2$ and consider
\begin{gather*}
\int_{\RR^{n}}\lan Tu,|u|^{p-2}u\ran =\\
    \frac{1}{2} \iint_{\RR^n\times\RR^n}
    [u(x)-u(y)][|u(x)|^{p-2}u(x)-|u(y)|^{p-2}u(y)]\, K(dx,dy)
\end{gather*}
for any $u\in  \Cspt^{1}(\RR^n)$. Note that in this case $|u|^{p-2}u\in  \Cspt^{1}(\RR^n)$.
 Since 
$$
 (x-y)(|x|^{p-2}x-|y|^{p-2}y) \geq \frac{4}{p\,p'}\, (|x|^{p/2}-|y|^{p/2})^2
$$
  for any $x,y\in \RR$ 
 (see \cite[p.231]{cialmazbook}), we have that
 \eqref{eq:Tuup} holds
for any $u\in  \Cspt^{1}(\RR^n)$.

If $1<p<2$, the second  condition in \eqref{eq:Tuup} can be written as
$$
\int_{\RR^{n}}\lan Tv,|v|^{p'-2}v\ran   \geq 0
$$
for any $v\in  \Cspt^{1}(\RR^n)$ and the result follows as 
in the case $p\geq 2$ already considered.
\end{proof}

As a Corollary, we have that
under an additional  condition,
we have a lower estimate involving a Besov semi-norm.
 
\begin{corollary}
Let the  kernel $K(dx,dy)$ 
satisfy the conditions of Theorem \ref{th:7.1}. Moreover
suppose that there exist  $C >0$ and $s\in (0,1)$ such that
\begin{equation} \label{eq:s}
 K(dx,dy) 
 \geq C \frac{dxdy}{|x-y|^{n+2s}} \quad \text{on } 
 \RR^n\times\RR^n\, .
\end{equation}
Then we have
\begin{gather*}
\int_{\RR^{n}}\lan Tu, u\ran\, |u|^{p-2}  \geq
\frac{2C}{p\, p'} \ \Vert  \, |u|^{p/2}\Vert^2_{{\mathcal L}^{s,2}(\RR^n)}\, ,\quad  \text{if } p\geq 2;\\
\int_{\RR^n}\lan T(|u|^{p'-2}u),u\ran \geq
\frac{2C}{p\, p'} \ \Vert  \, |u|^{p'/2}\Vert^2_{{\mathcal L}^{s,2}(\RR^n)}\, ,\quad  \text{if } 1< p < 2,
\end{gather*}
where
 $$
 \Vert  v \Vert_{{\mathcal L}^{s,2}(\RR^n)}
  =  \left(\iint_{\RR^n\times \RR^n} |v(y)-v(x)|^2 \frac{dxdy}{|y-x|^{n+2s}}\right)^{1/2}.
 $$
\end{corollary}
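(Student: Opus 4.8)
The plan is to obtain the Corollary as an immediate consequence of Theorem \ref{th:7.1}, the only new ingredient being the monotonicity of the integral under the measure inequality \eqref{eq:s}. Consider first the case $p\geq 2$. Theorem \ref{th:7.1} already supplies the lower bound
\[
\int_{\RR^n}\lan Tu,|u|^{p-2}u\ran \geq
\frac{4}{p\,p'}\iint_{\RR^n\times\RR^n}(|u(y)|^{p/2}-|u(x)|^{p/2})^2\,K(dx,dy),
\]
so the whole task reduces to bounding the right-hand integral from below. Since the integrand $(|u(y)|^{p/2}-|u(x)|^{p/2})^2$ is a nonnegative Borel function, integrating it against the larger measure $K(dx,dy)$ dominates integrating it against the smaller one; this is precisely where hypothesis \eqref{eq:s} is used, yielding
\[
\iint (|u(y)|^{p/2}-|u(x)|^{p/2})^2\,K(dx,dy)
\geq C\iint (|u(y)|^{p/2}-|u(x)|^{p/2})^2\,\frac{dx\,dy}{|x-y|^{n+2s}}.
\]

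The next step is to recognize the last double integral as a Besov-type seminorm. Putting $v=|u|^{p/2}$, which is real valued, one has $|u(y)|^{p/2}-|u(x)|^{p/2}=v(y)-v(x)$, so by the very definition of $\Vert\cdot\Vert_{\mathcal{L}^{s,2}(\RR^n)}$ the right-hand side above equals $\Vert\,|u|^{p/2}\Vert^2_{\mathcal{L}^{s,2}(\RR^n)}$. Chaining the two displayed inequalities then gives the asserted estimate, the constant being the product of the constant furnished by Theorem \ref{th:7.1} and $C$. One should only note that $v=|u|^{p/2}$ is a legitimate test function: for $u\in\Cspt^{1}(\RR^n)$ and $p\geq 2$ the function $|u|^{p/2}$ is Lipschitz with compact support, so its seminorm is finite; in any case the left-hand side is finite, so the bound is not vacuous. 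For the range $1<p<2$ the argument is verbatim the same, starting from the second inequality in \eqref{eq:disconK} and taking $v=|u|^{p'/2}$.

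I do not anticipate any genuine obstacle here, since all of the analytic substance — the absolute convergence of the double integrals via conditions \eqref{en:2b}–\eqref{en:3b}, the symmetrization \eqref{eq:Tuv}, and the pointwise inequality for $t\mapsto|t|^{p-2}t$ — has already been absorbed into Theorem \ref{th:7.1}. The single point that deserves a word of care is that \eqref{eq:s} is an inequality between measures, and it may be inserted inside the integral only because the integrand $(|u(y)|^{p/2}-|u(x)|^{p/2})^2$ is everywhere nonnegative; this is what turns the comparison of kernels into the comparison of the corresponding quadratic functionals.
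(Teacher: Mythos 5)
Your proposal is correct and is exactly the paper's argument: the paper's own proof is the one-line remark that the corollary ``follows immediately from \eqref{eq:disconK} and \eqref{eq:s}'', i.e.\ precisely your monotonicity-of-the-integral step applied to the nonnegative integrand. Note only that your chaining actually yields the constant $4C/(p\,p')$, which is stronger than (and in particular implies) the stated bound with $2C/(p\,p')$.
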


\begin{proof}
The result follows immediately from \eqref{eq:disconK} and \eqref{eq:s}.
 \end{proof}

 We conclude with a remark. In Sections \ref{sec:oblique} and
 \ref{sec:intop}, the space  $\Cspt^{1}(\RR^n)$ was the class of admissible
 functions.  Actually, in these cases, we could extend this class
 and consider more general functions like, for example,
 compactly supported Lipschitz functions or even 
 bounded functions in proper Sobolev spaces.

\bibliographystyle{amsplain}

\end{document}